\newtheorem{thm}{Theorem}[section]
\newtheorem{lem}[thm]{Lemma}
\newtheorem{prop}[thm]{Proposition}
\theoremstyle{definition}
\newtheorem{defn}[thm]{Definition}
\theoremstyle{remark}
\newtheorem{rem}[thm]{Remark}
\numberwithin{equation}{section}
\newcommand{\EE}{\mathcal{E}}
\newcommand{\OO}{\mathcal{O}}
\begin{document}

\title[Representations of Clifford Algebras]{The Fine Moduli Space of Representations of Clifford Algebras}
\author{Emre Coskun}
\address{120 Middlesex College, The University of Western Ontario, London, ON N6A 5B7 CANADA}
\email{ecoskun@uwo.ca}

\thanks{I am indebted to my thesis advisor Rajesh Kulkarni for his help on this project. I also want to thank Ajneet Dhillon for several valuable conversations involving the last section, and the referee for making valuable suggestions.}
\subjclass[2000]{Primary: 14D22, 14H60, 16G99, 16H05}
\keywords{Clifford algebra, moduli space, representation}

\date{\today}
%\dedicatory{}%
%\commby{}%
% ----------------------------------------------------------------
\begin{abstract}
Given a fixed binary form $f(u,v)$ of degree $d$ over a field $k$, the associated \emph{Clifford algebra} is the $k$-algebra $C_f=k\{u,v\}/I$, where $I$ is the two-sided ideal generated by elements of the form $(\alpha u+\beta v)^{d}-f(\alpha,\beta)$ with $\alpha$ and $\beta$ arbitrary elements in $k$. All representations of $C_f$ have dimensions that are multiples of $d$, and occur in families. In this article we construct fine moduli spaces $U=U_{f,r}$ for the irreducible $rd$-dimensional representations of $C_f$ for each $r \geq 2$. Our construction starts with the projective curve $C \subset \mathbb{P}^{2}_{k}$ defined by the equation $w^d=f(u,v)$, and produces $U_{f,r}$ as a quasiprojective variety in the moduli space $\mathcal{M}(r,d_r)$ of stable vector bundles over $C$ with rank $r$ and degree $d_r=r(d+g-1)$, where $g$ denotes the genus of $C$.
\end{abstract}
\maketitle
% ----------------------------------------------------------------
\section{Introduction}
Let $f$ be a binary form of degree $d$ over a field $k$. The \emph{Clifford algebra} $C_f$ associated to $f$ is the quotient of the tensor algebra on two variables by the two-sided ideal generated by $\{(\alpha u+\beta v)^{d}-f(\alpha,\beta)\ |
\ \alpha,\beta \in k\}$. We will be interested in the case of degree $d > 3$. We also assume that $f(u, v)$ is \emph{nondegenerate}, that is, $f$ has no repeated roots over the algebraic closure of $k$.

The structure and representations of Clifford algebras have been a subject of study in many recent papers. The degree 2 case is classical; for an overview of the subject, see \cite{LM89} and \cite{iP95}. The degree 3 case was examined by Haile in \cite{dH84}. Assuming that the characteristic of the base field is not 2 or 3, and that $f(u,v)$ is nondegenerate, he proved that $C_f$ is an Azumaya algebra (see Section \ref{azumaya}) over its center. He also proved that the center is isomorphic to the coordinate ring of an affine elliptic curve $J$. The curve $J$ is the Jacobian of $w^3=f(u,v)$ and the affine elliptic curve is the complement of the identity point in $J$.

Next we describe what is known for $d>3$. Let $C$ be the curve over $k$ defined by the equation $w^{d}=f(u,v)$ in $\mathbb{P}^{2}_{k}$, and let $g$ denote its genus. Since $f$ is assumed to be nondegenerate, the curve $C$ is nonsingular, and $g=(d-1)(d-2)/2$. Haile and Tesser proved in \cite{HT88} that the dimensions of representations of $C_{f}$ are divisible by $d$. Van den Bergh proved in \cite{mvdB87} (assuming that the base field $k$ is algebraically closed of characteristic 0) that the equivalence classes of $rd$-dimensional representations of the Clifford algebra $C_{f}$ are in one-to-one correspondence with vector bundles $E$ over $C$ having rank $r$, degree $r(d+g-1)$ such that $H^{0}(E(-1))=0$. These vector bundles are always semistable and the stable bundles correspond to the irreducible representations. Then, Haile and Tesser proved in \cite{HT88} that $\widetilde{C}_f = C_f / \cap \eta$, where $\eta$ runs over the kernels of the dimension $d$ representations, is Azumaya over its center. The center of $\widetilde{C}_f$ is then the fine moduli space of $d$-dimensional representations (hence $r=1$) of $C_f$. Kulkarni then proved in \cite{rK03} that this center is the affine coordinate ring of the complement of a $\Theta$-divisor in the space $Pic^{d+g-1}_{C/k}$ of degree $d+g-1$ line bundles over $C$.

This article generalizes Kulkarni's work to the higher rank case, $r \geq 2$; under the assumption that the characteristic of $k$ does not divide $d$. We begin with Van den Bergh's correspondence between equivalence classes of $rd$-dimensional representations of $C_{f}$ and semistable vector bundles over $C$ of rank $r$ and degree $r(d+g-1)$, which can be described as follows. Consider a representation $\phi: C_{f} \to M_{rd}(k)$. Set $\alpha_{u}=\phi(u)$ and $\alpha_{v}=\phi(v)$. Then we define a map from $S=k[u,v,w]/(w^{d}-f(u,v))$ to $M_{rd}(k[u,v])$ by sending $u$ to $uI_{rd}$, $v$ to $vI_{rd}$, and $w$ to $u\alpha_{u}+v\alpha_{v}$. This makes $\bigoplus_{rd}k[u,v]$ into a graded $S$-module. It can be proven that the corresponding coherent sheaf is a vector bundle. In this way we get a rank $r$ vector bundle $\mathcal{E}$ over $C$ such that $q_{*}\mathcal{E} \cong \mathcal{O}_{\mathbb{P}^{1}_{k}}^{rd}$, where $q:C \to \mathbb{P}^{1}_{k}$ is the map defined by the inclusion $k[u,v] \to S$. The condition $H^{0}(E(-1))=0$ is equivalent to the condition that $q_{*}\mathcal{E} \cong \mathcal{O}_{\mathbb{P}^{1}_{k}}^{rd}$.

The moduli problem we solve in this article can be stated as follows: Define a contravariant functor $\mathcal{R}ep_{rd}(C_f,-)$ from the category of $k$-schemes to the category of sets by sending a $k$-scheme $S$ to the set of equivalence classes of $rd$-dimensional irreducible $S$-representations of $C_f$. (For the definition of an $S$-representation of $C_f$, see Definition \ref{s-rep}.) Procesi proved that this functor is representable by a scheme $U$ (see Theorem 1.8 in Chapter 4 of \cite{cP73}.) Unfortunately, Procesi's method gives no geometric description of $U$. In this article, we prove that $U$ is isomorphic to an open subset of the coarse moduli space $\mathcal{M}(r,r(d+g-1))$ of stable vector bundles of rank $r$ and degree $r(d+g-1)$ over $C$. Using this geometric description, we construct the universal representation $\mathcal{A}$ of $C_f$ over $U$.

More explicitly, the universal representation of $C_f$ of a given dimension $rd$ is a $k$-algebra homomorphism $\psi:C_f \to H^0(\mathcal{A})$, where $\mathcal{A}$ is a sheaf of Azumaya algebras of rank $(rd)^2$ defined over $U$. The base variety $U$ is the open subset of $\mathcal{M}(r,r(d+g-1))$ consisting of stable vector bundles $E$ such that $H^{0}(E(-1))=0$. The sheaf $\mathcal{A}$ is constructed as follows. There is a Quot scheme $Q$ (see Theorem \ref{groth}), that parametrizes the quotients of the trivial vector bundle of large enough rank $N$ over $C$, having rank $r$ and degree $r(d+g-1)$, and there is a universal bundle $\mathcal{E}$ over $C \times Q$. We take the open subset $\Omega$ of $Q$ consisting of stable vector bundles $E$ with $H^{0}(E(-1))=0$. We prove in Lemma \ref{pf} that the pushforward of $\mathcal{E}$ to $\Omega$ under the projection map $\pi: C \times \Omega \to \Omega$ is a rank $rd$ vector bundle. The algebraic group $GL(N)$ acts on $\Omega$ and also on $\pi_{*} \mathcal{E}$. The stabilizer of a point in $\Omega$ under this action is the group of scalar matrices, so the action of $GL(N)$ on $\Omega$ descends to an action of $PGL(N)$. But the scalar matrices act as scalar multiplication on $\pi_{*} \mathcal{E}$. So we get a $PGL(N)$-action on $\mathcal{E}nd(\pi_{*}\mathcal{E})$. The resulting Geometric Invariant Theory quotient is the variety $U$ together with a sheaf of algebras $\mathcal{A}$ on it. We then construct the homomorphism $\psi:C_f \to H^0(\mathcal{A})$.

The main theorem of this article is as follows:
\begin{thm}\textbf{(Main Theorem)}
Let $k$ be an algebraically closed base field. $\mathcal{R}ep_{rd}(C_f,-)$ is represented by the pair $(\psi,\mathcal{A})$ described above.
\end{thm}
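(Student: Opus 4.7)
The strategy is to establish, for each $k$-scheme $S$, a natural bijection between $\mathcal{R}ep_{rd}(C_f,-)(S)$ and $\mathrm{Hom}_k(S,U)$, realized explicitly as pullback of the universal pair $(\psi,\mathcal{A})$. The bridge between representations and vector bundles is a family version of Van den Bergh's correspondence: an $S$-representation of $C_f$ corresponds to a coherent sheaf $\mathcal{F}$ on $C \times S$ which is $S$-flat, fiberwise a stable rank $r$ bundle of degree $r(d+g-1)$ satisfying $H^0(E(-1))=0$, and whose pushforward to $S$ is locally free of rank $rd$ in analogy with Lemma \ref{pf}.

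First I would verify that $(\psi,\mathcal{A})$ is itself an irreducible $rd$-dimensional $U$-representation of $C_f$. The sheaf $\mathcal{A}=\mathcal{E}nd(\pi_*\mathcal{E})$ descends from $\Omega$ to $U$ as a sheaf of Azumaya algebras of rank $(rd)^2$, because the $GL(N)$-stabilizer at every point of $\Omega$ is the subgroup of scalar matrices, which acts trivially on $\mathcal{E}nd(\pi_*\mathcal{E})$; hence the action factors through $PGL(N)$ and is free in an étale-local sense adequate for descent. The homomorphism $\psi$ is defined by sending $u$ and $v$ to the tautological endomorphisms of $\pi_*\mathcal{E}$ prescribed by Van den Bergh's recipe; checking the Clifford relations $(\alpha u+\beta v)^d = f(\alpha,\beta)$ reduces fiberwise to the fact that $\mathcal{E}$ is supported on $C$, where these relations cut out the curve.

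Next, given an arbitrary irreducible $rd$-dimensional $S$-representation $(\phi,\mathcal{B})$, I would construct a classifying morphism $f:S \to U$ as follows. The family Van den Bergh construction applied to $\phi$ produces a coherent sheaf $\mathcal{F}$ on $C \times S$ of the prescribed numerical type. After twisting by a sufficiently positive line bundle pulled back from $\mathbb{P}^1_k$ and choosing an étale-local surjection $\mathcal{O}_C^N \twoheadrightarrow \mathcal{F}$, the universal property of the Quot scheme yields a morphism $S \to Q$ that factors through $\Omega$. Composition with the GIT quotient $\Omega \to U$ produces $f:S \to U$, and compatibility of $\mathcal{E}nd(\pi_*\mathcal{E})$ with base change gives an isomorphism $f^*\mathcal{A} \cong \mathcal{B}$ intertwining $f^*\psi$ with $\phi$.

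The main technical obstacle will be uniqueness of $f$. Because $\Omega \to U$ is only a good quotient, distinct choices of generating sections of $\mathcal{F}$ produce distinct $S$-points of $\Omega$, and the sheaf $\mathcal{F}$ itself is determined only up to twisting by a line bundle pulled back from $S$. The necessary rigidification is supplied by the extra data of $\mathcal{B}$ together with $\phi$: any two classifying morphisms $f,f'$ arise from families related by a $PGL(N)$-cocycle, and the isomorphism of Azumaya algebras compatible with $\phi$ forces this cocycle to be a coboundary after descent along $\Omega \to U$. Pinning down the dictionary so that equivalence of $S$-representations matches the $PGL(N)$-orbit relation on the Quot side, and checking the descent step carefully, is where the bulk of the technical work will lie; once this is done the bijection and its naturality in $S$ follow by construction.
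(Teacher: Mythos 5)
Your overall strategy --- prove representability directly by exhibiting, for every $k$-scheme $S$, a bijection $\mathrm{Hom}_k(S,U)\to \mathcal{R}ep_{rd}(C_f,S)$, $f\mapsto f^{*}(\psi,\mathcal{A})$ --- is a genuinely different route from the paper, which never verifies the universal property from scratch: it invokes the known representability of $\mathcal{R}ep_{rd}(C_f,-)$ by a scheme $T$ with universal pair $(\Psi,\mathcal{B})$, gets a canonical $\alpha:U\to T$ from $(\psi,\mathcal{A})$, builds $\beta:T\to U$ through the subfunctor $\mathcal{G}_{rd}$ of endomorphism-bundle representations, its fppf sheafification, the relative Van den Bergh construction (Lemma \ref{const_M}) and the \emph{coarse} moduli property, and then checks $\alpha\circ\beta=\mathrm{id}_T$ by a sheaf argument and $\beta\circ\alpha=\mathrm{id}_U$ on closed points only, using Proposition \ref{important_remark} together with Lemma \ref{easy-lemma} (a self-map of the reduced quasi-projective variety $U$ that fixes closed points is the identity). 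The point of that detour is precisely to avoid the step where your proposal is weakest.

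The genuine gap is the uniqueness (injectivity) half of your bijection. You must show that if $f,f':S\to U$ satisfy $f^{*}(\psi,\mathcal{A})\simeq f'^{*}(\psi,\mathcal{A})$ as $S$-representations, then $f=f'$ --- for \emph{arbitrary} $S$, including non-reduced bases such as $\mathrm{Spec}\,k[\epsilon]$, where agreement on points is not enough. Your one-sentence argument (``distinct classifying morphisms arise from families related by a $PGL(N)$-cocycle, which the compatibility with $\phi$ forces to be a coboundary'') does not engage this: the ambiguity in lifting to $\Omega$ is harmless since $U$ is already the quotient, and the real issue is that $U$ is only a coarse moduli space with no universal family on $C\times U$, so it is not clear a priori that $f$ can be recovered from the pulled-back representation. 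Making this work requires (i) the \'etale-local triviality of $\Omega\to U$ to lift $f$ locally, (ii) the graded-module reconstruction (the analogue of Theorem \ref{usls} and Proposition \ref{pm}) to recover the rank-$r$ family on $C\times S$ from $f^{*}\mathcal{A}$ together with $f^*\psi$, and (iii) an argument that a twist by a line bundle pulled back from the base does not change the induced map to the coarse space --- the same point is needed to glue your flat-locally defined classifying maps $S_i\to U$ for a general Azumaya representation, since the bundles $\mathcal{E}^{S_i}$ are only determined up to such twists on overlaps. None of this is in your sketch, and it is exactly the content the paper circumvents by comparing with Procesi's $T$ and reducing the hard direction to a closed-point check on the reduced variety $U$. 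A secondary, smaller omission: the existence of the global sections $\alpha_u,\alpha_v$ with $w=u\alpha_u+v\alpha_v$ is not a fiberwise statement; it rests on $\mathcal{F}\cong p_{\Omega}^{*}(p_{\Omega})_{*}\mathcal{F}$ (Theorem \ref{usls}) and $PGL(N)$-invariance, which your ``tautological endomorphisms'' phrasing takes for granted.
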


For the proof, we construct an isomorphism between $(\psi,\mathcal{A})$ and the universal representation in Procesi's theorem. As mentioned above, Procesi showed that the functor $\mathcal{R}ep_{rd}(C_f,-)$ is representable, that is, there is a universal representation $(\Psi,\mathcal{B})$ consisting of a scheme $T$, a sheaf of Azumaya algebras $\mathcal{B}$ over $T$, and a $k$-algebra homomorphism $\Psi:C_f \to H^0(\mathcal{B})$. Since we have an irreducible representation $(\psi,\mathcal{A})$ over $U$, we obtain a map $\alpha: U \to T$ such that $\alpha^{*}(\Psi,\mathcal{B}) \cong (\psi,\mathcal{A})$. The idea of the proof is to construct an inverse to $\alpha$. To do this, we first consider irreducible representations of the type $(\phi,\mathcal{E}nd(\mathcal{E}^{S}))$, where $\mathcal{E}^{S}$ is a vector bundle of rank $rd$ over $S$. We construct an associated vector bundle in Lemma \ref{const_M}. By the coarse moduli property of $\mathcal{M}(r,r(d+g-1))$, this gives us a map $f:S \to U$. We use this to construct a morphism $\beta: T \to U$. We then prove that $\alpha$ and $\beta$ are inverses, and hence that $(\psi,\mathcal{A})$ represents $\mathcal{R}ep_{rd}(C_f,-)$. This finishes the proof of the theorem.

Let $B$ be a $k$-algebra. As mentioned above, the functor $\mathcal{R}ep_{n}(B,-)$ that assigns the set of irreducible $S$-representations of $B$ to the $k$-scheme $S$ is representable by Procesi's theorem. However, explicit descriptions of the scheme $Rep_n$ that represents $\mathcal{R}ep_n$ are rare. The current article is of interest in this direction because it provides an explicit description of $Rep_n$ for the Clifford algebra as an open subvariety of the quasiprojective variety that is the coarse moduli space of stable, rank $r$ and degree $r(d+g-1)$ vector bundles over the curve $C$.

Further questions can be asked about this universal representation. $\mathcal{A}$ gives a class in the Brauer group $Br(U)$. Since the Brauer group is torsion, it is natural to ask what the period and index of this class (as defined in Section \ref{azumaya}) are. It is known that the period always divides the index, and the set of primes dividing both of them is the same. Hence the index divides a power of the period. The \emph{period-index} problem is the problem of computing this power. This is part of an ongoing project.

Second, it is an interesting question to examine the representations of Clifford algebras of \emph{ternary} forms. By the results of Van den Bergh in \cite{mvdB87}, these correspond to vector bundles over a surface $X$ in $\mathbb{P}^3$ defined by the equation $z^d=f(u,v,w)$, whose direct images under the natural projection map $X \to \mathbb{P}^{2}$ is a trivial vector bundle. These will be studied in future articles.

\subsection*{Conventions and notation}
Let $k$ be a perfect, infinite base field with characteristic 0 or not dividing $d$. A \emph{variety} means a separated scheme of finite type over $k$. A variety of dimension 1 is called a \emph{curve}.

\begin{itemize}
\item All rings have an identity element.
\item All schemes are locally Noetherian over $k$ and all morphisms are locally of finite type over $k$.
\item The terms line bundle and invertible sheaf are used interchangeably.
\item $\mathcal{M}(r,d)$ denotes the coarse moduli space of stable vector bundles of rank $r$ and degree $d$ over a curve $C$.
\item For any vector bundle $\mathcal{E}$, $\chi(\mathcal{E})$ denotes the Euler characteristic of $\mathcal{E}$.
\item The projection of a fiber product onto the $i^{\mbox{th}}$ component $X_{i}$ is denoted $p_{i}$, $\pi_{i}$, $p_{X_{i}}$ or $\pi_{X_{i}}$. When no subscript is indicated, $\pi$ is the canonical map from $C \times Y \to Y$ for a variety $Y$.
\item For technical reasons, we assume throughout the article that the binary form $f(x,y)$ has no repeated factors over the algebraic closure of the base field $k$ and that the characteristic of $k$ does not divide $d$.
\item $q$ denotes the canonical map from $C$ to $\mathbb{P}^{1}$, where $C$ is the curve $w^d=f(u,v)$ in $\mathbb{P}^{2}$. $q_{S}$ denotes the map $q \times id_{S}: C \times S \to \mathbb{P}^{1} \times S$ for any scheme $S$, and $p_{S}: \mathbb{P}^{1} \times S \to S$ is the natural projection.
\item For any coherent sheaf $\mathcal{F}$ over a scheme $X$, we denote the $i^{th}$ cohomology of $\mathcal{F}$ by $H^{i}(\mathcal{F})$ whenever $X$ is clear from the context.
\item For a closed point $y$ in a scheme $S$, and for a vector bundle $\mathcal{E}$ over $C \times S$, $\mathcal{E}_{y}$ denotes the pull-back of $\mathcal{E}$ under the canonical map $id \times i_y: C \times Spec\ k(y) \to C \times S$.
\end{itemize}

The following lemma will be useful in proving the main theorem.
\begin{lem}\label{easy-lemma}
Let $Y$ be a reduced quasi-projective variety over an algebraically closed field $k$. Let $f:Y \to Y$ be a morphism of $k$-varieties that is the identity on closed points. Then $f$ is the identity morphism.
\end{lem}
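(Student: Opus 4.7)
The plan is to exhibit the locus where $f$ and $\mathrm{id}_Y$ agree as a closed subscheme $E$ of $Y$, then combine density of closed points with reducedness to force $E = Y$ scheme-theoretically.

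First, I would form the morphism $(\mathrm{id}_Y, f): Y \to Y \times_k Y$ and pull back the diagonal $\Delta_Y \subset Y \times_k Y$. Since $Y$ is separated over $k$, the diagonal $\Delta_Y$ is a closed subscheme of $Y \times_k Y$, so its preimage $E \subset Y$ is a closed subscheme. By construction, $E$ is the equalizer of $f$ and $\mathrm{id}_Y$, and a morphism factors through $E$ if and only if it is equalized by $f$ and $\mathrm{id}_Y$. In particular, a closed point $y \in Y$ lies in $E$ precisely when $f(y) = y$, so by hypothesis every closed point of $Y$ lies in $E$.

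Next, I would invoke the fact that $Y$, being a variety of finite type over an algebraically closed field, is a Jacobson scheme: its closed points are dense in every nonempty open subset. Consequently, $E$ and $Y$ have the same underlying topological space. Writing $E = V(\mathcal{I})$ for a coherent sheaf of ideals $\mathcal{I} \subset \mathcal{O}_Y$, the equality of underlying spaces says that $\mathcal{I}$ is contained in the nilradical of $\mathcal{O}_Y$. Because $Y$ is reduced, the nilradical is zero, so $\mathcal{I} = 0$ and $E = Y$ as closed subschemes. The universal property of the equalizer then forces $f = \mathrm{id}_Y$.

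There is no serious obstacle here; the only point that requires a bit of care is the passage from topological equality to scheme-theoretic equality, which is precisely where the hypothesis that $Y$ is reduced (rather than an arbitrary locally Noetherian scheme) is used. Separatedness of $Y$ enters in a single, essential place, namely in guaranteeing that the equalizer locus is closed.
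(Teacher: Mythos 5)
Your argument is correct and takes a genuinely different, and arguably cleaner, route than the paper's. The paper first shows by hand that $f$ fixes all points (not just closed ones) via a two-case analysis comparing the closures $\overline{\xi}$ and $\overline{\eta}$ of a hypothetical non-fixed point and its image, and then reduces to an affine chart $\mathrm{Spec}\,A$ where it shows directly that a regular function vanishing at every prime of a reduced ring is zero. You instead package everything into the scheme-theoretic equalizer $E = (\mathrm{id}_Y,f)^{-1}(\Delta_Y)$: separatedness makes $E$ closed, the Jacobson property forces $E = Y$ topologically, and reducedness promotes this to a scheme-theoretic equality, whence $f = \mathrm{id}_Y$ by the universal property of the equalizer. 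Your version cleanly isolates exactly where each hypothesis (separatedness, finite type over a field, reducedness) is used and avoids the pointwise case analysis entirely; the paper's version is more elementary in the sense of not invoking fibre products or the diagonal, but at the cost of a less transparent argument. Both are valid; yours is the more structural proof.
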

\begin{proof}
First we prove that $f$ is the identity on all points. Let $\xi$ be a non-closed point. Assume that $f(\xi)=\eta \neq \xi$. Then since closed irreducible subsets have unique generic points in a scheme (see 2.1.2 and 2.1.3 Chapitre 0 and Corollaire 1.1.8 Chapitre 1, \cite{aG65} Vol. 1 for more details), we have $\overline{\xi} \neq \overline{\eta}$, where $\overline{\xi}$ and $\overline{\eta}$ denote the closures of $\xi$ and $\eta$, respectively. There are two cases to consider:

Case 1: $\overline{\xi} \nsubseteq \overline{\eta}$. In this case, $\overline{\xi} \setminus \overline{\eta}$ is a nonempty open subset $V$ of $\overline{\xi}$. Since $V$ is a quasi-projective variety over $k$, it has a closed point. Pick a closed point $y \in \overline{\xi}\ \backslash\ \overline{\eta}$. Then $f^{-1}(\overline{\eta})$ is a closed set that contains $\xi$, and hence it contains $\overline{\xi}$ and in particular $y$, a contradiction.

Case 2: $\overline{\xi} \subsetneq \overline{\eta}$. Since this is a proper inclusion, the dimension of $\overline{\eta}$ is strictly greater than the dimension of $\overline{\xi}$. But since $k(\eta)$ is a subfield of $k(\xi)$, this is a contradiction for dimension reasons.

Since $f$ is the identity map on points, it maps affine open subsets to affine open subsets.

Now since any reduced quasi-projective variety can be covered by reduced open affine varieties, without loss of generality we may assume that $Y$ affine, and hence it is the spectrum of a finitely generated reduced $k$-algebra $A=A(Y)=k[T_1,\ldots,T_n]/I(Y)$. Let $f$ be induced by the morphism of rings $\phi:A \to A$.

Let $a \in A$. We may view $a$ as a morphism $a:Y \to \mathbb{A}^{1}$. Then the composition $a \circ f: Y \to Y \to \mathbb{A}^{1}$ corresponds to $\phi(a) \in A$. Since $f$ is the identity on points; as morphisms $Y \to \mathbb{A}^{1}$, $a$ and $\phi(a)$ take the same value on all the points on $Y$. This means that, for any prime ideal $\mathfrak{p}$ of $A$, we have $a-\phi(a) \in \mathfrak{p}$. Since $A$ is reduced, this means that $a-\phi(a)=0$ and hence $\phi$ is the identity morphism.

%Recall that $f$ is the identity on points. Now since $\phi$ induces the identity morphism on the corresponding varieties, that $\phi$ is the identity follows from the classical equivalence between the category of affine algebraic varieties over $k$ and the category of finitely generated reduced $k$-algebras.

%From elementary algebraic geometry, the image $\phi(T_i)$ is the regular function on $Y$ whose values are equal to that of the $i^{th}$ coordinate function on $Y$. Hence, the element $\phi(T_i)-T_i$ of $A(Y)$ vanishes on $Y$. Since $Y$ is reduced, and hence Since $Y$ is reduced, this is the same as $T_i$. This proves that $\phi$, and hence $f$, is the identity map.
\end{proof}

\section{Preliminaries}
In this section, we will review the results to be used in this article.
\subsection{Moduli of vector bundles over curves}\label{mb}
Here we review how to construct the moduli spaces of vector bundles over curves. (For more information, see \cite{peN78}, \cite{jL97} and \cite{gF95}.) Let $C$ be a nonsingular irreducible projective curve of genus $g \geq 2$ and let $E$ be a vector bundle over $C$. The degree $deg(E)$ is defined to be the degree of the determinant line bundle $det(E)$ of $E$, and can be any integer. A \emph{family} of vector bundles over $C$ parametrized by a scheme $S$ is a vector bundle over $C \times S$ that is flat over $S$; and an \emph{isomorphism} of families is just an isomorphism of vector bundles over $C \times S$. For a family $E$ of vector bundles parametrized by $S$ and $s \in S$, we denote by $E_s$ the fiber of $E$ over $s$.

To be able to define the moduli space of vector bundles over $C$ with rank $r \geq 2$ and degree $D$ as a variety, we have to introduce extra conditions on the vector bundles. To do that, we define the \emph{slope} of a vector bundle $E$ over $C$ as $\mu(E)=deg(E)/rk(E)$. Then we have the following:
\begin{defn}
A vector bundle $E$ over $C$ is \emph{stable} (\emph{semistable}) if, for every nontrivial subbundle $F$ with $F \neq E$,
\[
 \mu(F) < \mu(E)\quad(\leq)
\]
\end{defn}
With these definitions in place, we can now state the main results about the moduli space of vector bundles over $C$:
\begin{thm}
There exist coarse moduli spaces $\mathcal{M}(r,D)$ and $\mathcal{M}^{ss}(r,D)$ for stable and semistable bundles of rank $r$ and degree $D$ over any nonsingular irreducible projective curve $C$ of genus $g \geq 2$. $\mathcal{M}(r,D)$ is a nonsingular quasiprojective variety that is contained in $\mathcal{M}^{ss}(r,D)$, which is a projective variety. $\mathcal{M}^{ss}(r,D)$ is normal, and its singular locus is given by $\mathcal{M}^{ss}(r,D)\backslash \mathcal{M}(r,D)$. The dimension of these moduli spaces is equal to $r^{2}(g-1)+1$. Moreover, $\mathcal{M}(r,D)$ is a fine moduli space if and only if $gcd(r,D)=1$.
\end{thm}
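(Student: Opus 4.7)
The plan is to follow the classical Geometric Invariant Theory construction due to Mumford, Seshadri, Narasimhan--Ramanan. First I would establish boundedness: for a sufficiently large integer $m$ (depending only on $r$, $D$, $g$), every semistable bundle $E$ of rank $r$ and degree $D$ satisfies $H^{1}(E(m))=0$ and $E(m)$ is generated by global sections. Setting $N=D+mr+r(1-g)$, one can then realize every such $E(m)$ as a quotient of $\mathcal{O}_{C}^{N}$, i.e., as a point of a Grothendieck Quot scheme $Q=\mathrm{Quot}(\mathcal{O}_{C}^{N},r,D+mr)$. Restricting to the open subscheme $R\subset Q$ where the quotient is locally free and the induced map on global sections is an isomorphism gives a single scheme that parametrizes all semistable bundles, with isomorphism classes corresponding to $GL(N)$-orbits.

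Next I would construct the moduli space as a GIT quotient. Choose the embedding of $R$ into a Grassmannian (via the $k$-th exterior power of the universal quotient for large $k$), pull back the natural $GL(N)$-linearized ample line bundle, and apply Mumford's GIT. The central technical step, and the main obstacle, is to verify that the Hilbert--Mumford numerical criterion for GIT (semi)stability at a point $[E]\in R$ is equivalent to slope (semi)stability of the vector bundle $E$. This is carried out by translating one-parameter subgroups of $GL(N)$ into filtrations of $E$ and computing weights in terms of Hilbert polynomials; the existence of destabilizing subbundles then corresponds exactly to existence of destabilizing one-parameter subgroups. Once this equivalence is established, $\mathcal{M}^{ss}(r,D):=R^{ss}/\!\!/GL(N)$ is automatically a projective variety and $\mathcal{M}(r,D):=R^{s}/GL(N)$ is a quasi-projective variety inside it, carrying the universal (coarse) moduli property.

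For the structural statements I would argue as follows. Normality of $\mathcal{M}^{ss}(r,D)$ follows because $R$ is smooth (it is an open subscheme of a Quot scheme that is smooth at points where $\mathrm{Ext}^{1}$ vanishes in the relevant degree, which on a curve is automatic for high enough twist), and GIT quotients of smooth schemes by reductive groups are normal. Smoothness of $\mathcal{M}(r,D)$ uses deformation theory on a curve: the tangent space at $[E]$ is $H^{1}(C,\mathcal{E}nd(E))$ and obstructions live in $H^{2}(C,\mathcal{E}nd(E))=0$, so $\mathcal{M}(r,D)$ is smooth of dimension $h^{1}(\mathcal{E}nd(E))$. By Riemann--Roch and the fact that stable bundles are simple ($h^{0}(\mathcal{E}nd(E))=1$ by a Schur-type argument), this dimension equals $r^{2}(g-1)+1$. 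The identification of the singular locus with the strictly semistable locus $\mathcal{M}^{ss}\setminus\mathcal{M}$ comes from the fact that polystable orbits have positive-dimensional stabilizers, so the quotient fails to be smooth precisely there.

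Finally, for the assertion about fine moduli, one checks that when $\gcd(r,D)=1$ there are no strictly semistable bundles (any potential destabilizer would have slope equal to $D/r$, which is impossible with integral degree and smaller rank), so the $PGL(N)$-action on $R^{s}$ is free. A free action of a reductive group gives a principal bundle, which can be used to descend the universal quotient on $C\times R^{s}$ to a Poincar\'e bundle on $C\times\mathcal{M}(r,D)$, establishing fineness. Conversely, if $\gcd(r,D)>1$, strictly semistable bundles exist and their nontrivial automorphism groups prevent the existence of a universal bundle by a standard gerbe/obstruction argument. The genuinely hard step in the whole program remains the Hilbert--Mumford computation matching the two notions of stability.
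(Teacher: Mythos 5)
The paper does not prove this theorem: it appears in Section 2.1 as a recalled classical fact, with references to Newstead, LePotier and Faltings for the GIT construction. Your sketch is, in substance, the standard argument in those references (boundedness, Quot scheme, matching Hilbert--Mumford stability with slope stability, deformation theory for smoothness and dimension, strictly semistable locus as the boundary, coprimality for fineness), so there is no genuine difference in approach to report.

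Two small imprecisions in your sketch are worth flagging, though neither is a fatal gap. First, in the fineness direction, freeness of the $PGL(N)$-action on $R^{s}$ is not by itself enough to descend the universal quotient $\EE$ on $C\times R^{s}$: the scalar matrices in $GL(N)$ act nontrivially on $\EE$, so one must first twist $\EE$ by a line bundle pulled back from $R^{s}$ (built from determinant-of-cohomology--type bundles) chosen, via Riemann--Roch, so that the scalar action cancels; this is exactly where $\gcd(r,D)=1$ enters. Your phrase ``a free action of a reductive group gives a principal bundle, which can be used to descend the universal quotient'' skips this twist. Second, the identification of the singular locus of $\mathcal{M}^{ss}(r,D)$ with the strictly semistable locus does not follow merely from polystable orbits having positive-dimensional stabilizers; one has to analyze the local quotient structure (\'etale slices) at such points, and in fact the statement as given has a well-known exception, $g=2$, $r=2$, $D$ even, where $\mathcal{M}^{ss}\cong\mathbb{P}^{3}$ is smooth. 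The paper's statement does not record this exception either, so this is as much a comment on the theorem as on your proposal.
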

The points of $\mathcal{M}(r,D)$ correspond to stable vector bundles of rank $r$ and degree $D$ over $C$. To describe the points of $\mathcal{M}^{ss}(r,D)\backslash \mathcal{M}(r,D)$, we note that for any semistable bundle $E$ over $C$, there is a sequence of subbundles
\[
 E_1 \subseteq E_2 \subseteq \ldots E_n = E
\]
such that $E_1$, $E_2 / E_1$, \ldots, $E_n / E_{n-1}$ are all stable with slopes equal to $\mu(E)$. Moreover, it can be shown that the bundle $gr\ E = E_1 \oplus E_2 / E_1 \oplus \ldots \oplus E_n / E_{n-1}$ is determined up to isomorphism by $E$. (This is called the Jordan-H\"{o}lder filtration.) We have
\begin{prop}
Two semistable bundles $E$ and $E'$ determine the same point of $\mathcal{M}^{ss}(r,D)$ if and only if $gr\ E \cong gr\ E'$.
\end{prop}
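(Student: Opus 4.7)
The plan is to exploit the GIT construction of $\mathcal{M}^{ss}(r,D)$ referenced in the sources cited at the start of this subsection. In that construction, one fixes a sufficiently large twist so that every semistable bundle $E$ of rank $r$ and degree $D$ arises as a quotient $\mathcal{O}_C(-m)^{\oplus N} \twoheadrightarrow E$, and $\mathcal{M}^{ss}(r,D)$ is realized as a GIT quotient of an appropriate locally closed subscheme $R^{ss} \subset \mathrm{Quot}$ by a reductive group $G$ (such as $SL(N)$) acting on the choice of surjection. The semistable points of $R^{ss}$ for a suitable linearization are precisely those parametrizing semistable bundles. A standard fact of GIT then asserts that two semistable points map to the same point of the quotient if and only if the closures of their $G$-orbits meet in $R^{ss}$, and each such fibre contains a unique closed orbit.

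The first key step is to show that the orbit in $R^{ss}$ of a point parametrizing a polystable bundle $\bigoplus_{i=1}^{n} E_i/E_{i-1}$ is closed, while the orbit of a strictly semistable $E$ has such a polystable point in its closure. To exhibit the degeneration $E \rightsquigarrow gr\,E$, I would employ a Rees-type construction: given the Jordan--H\"older filtration $0 = E_0 \subset E_1 \subset \ldots \subset E_n = E$, form the subsheaf $\widetilde{E} \subset p_C^{\ast} E$ on $C \times \mathbb{A}^1$ generated in the Rees fashion by $t^{n-i} p_C^\ast E_i$, yielding a flat family over $\mathbb{A}^1$ whose fibre at every $t \neq 0$ is isomorphic to $E$ and whose fibre at $t = 0$ is isomorphic to $gr\,E$. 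Lifting compatible trivializations produces a morphism $\mathbb{A}^1 \to R^{ss}$ realizing this degeneration as a curve in the orbit closure of the point for $E$, with limit the point for $gr\,E$.

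With these two ingredients in hand, both directions follow readily. If $gr\,E \cong gr\,E'$, the closed orbit of the polystable bundle $gr\,E$ lies in both orbit closures, so $E$ and $E'$ are sent to the same point of the GIT quotient. Conversely, if $E$ and $E'$ are mapped to the same point, their orbit closures intersect; since each fibre contains a unique closed orbit and we have shown that the closed orbit in the closure of any semistable $E$ is the one of $gr\,E$, we conclude $gr\,E \cong gr\,E'$ as bundles over $C$.

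The main obstacle will be carrying out the Rees degeneration rigorously: checking flatness of $\widetilde{E}$ over $\mathbb{A}^1$, identifying the special fibre with $gr\,E$, and then lifting the abstract deformation of bundles to a morphism $\mathbb{A}^1 \to R^{ss}$ at the level of the Quot scheme so that GIT applies. Once that is done, the closedness of the polystable orbit (which reduces to a semistable-reduction argument together with the fact that a polystable bundle has reductive automorphism group) completes the argument.
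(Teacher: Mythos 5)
The paper never proves this proposition: it appears in the expository subsection on moduli of bundles and is quoted as a known fact (Seshadri's theorem on $S$-equivalence), with the reader referred to the sources cited there (Seshadri, Newstead, Le Potier). So there is no in-paper argument to compare against; what you have written is precisely the standard proof from those references, and its architecture is sound: realize $\mathcal{M}^{ss}(r,D)$ as a good quotient of $R^{ss}\subset \mathrm{Quot}$ by a reductive group, use that two semistable points have the same image iff their orbit closures meet and that each fibre of the quotient contains a unique closed orbit, exhibit the degeneration $E\rightsquigarrow gr\,E$ inside $R^{ss}$ by the Rees construction applied to the Jordan--H\"older filtration (flatness and the identification of the special fibre are routine, and the family lifts to the Quot scheme because $gr\,E$ is again semistable of the same slope, so the same twist gives vanishing of $H^1$ and global generation fibrewise), and identify the closed orbits with the polystable bundles. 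One caveat: your parenthetical justification for closedness of the polystable orbit --- ``a polystable bundle has reductive automorphism group'' --- is not by itself a proof; reductivity of the stabilizer is necessary for a closed orbit (Matsushima) but not sufficient, so this step should instead be argued directly, e.g.\ by showing that the unique closed orbit in the closure of the orbit of any semistable point is the orbit of the associated graded of a slope-$\mu(E)$ filtration (via the description of one-parameter-subgroup limits of quotient points), and then observing that for a polystable $F$ any subsheaf of the same slope is a direct summand, so this graded object is isomorphic to $F$ itself and the orbit is already closed. With that repair, together with the uniqueness of $gr\,E$ up to isomorphism (Jordan--H\"older) and the fact that points of $R^{ss}$ in the same orbit parametrize isomorphic bundles, both implications follow exactly as you state.
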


Let us now recall how the coarse moduli space $\mathcal{M}(r,D)$ of stable vector
bundles of rank $r$ and degree $D$ is constructed. The following lemma is crucial:

\begin{lem}\label{lem21}
Let $E$ be a semistable vector bundle over $C$ of rank $r$ and degree $D$,
and suppose that $D > r(2g-1)$. Then
\begin{enumerate}
\item $H^{1}(E)=0$
\item $E$ is generated by its sections.
\end{enumerate}
\end{lem}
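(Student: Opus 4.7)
The plan is to prove (1) by a Serre-duality-plus-semistability argument, and then deduce (2) by applying (a minor variant of) (1) to the twist $E(-p)$ for each point $p \in C$.

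For (1), I would assume for contradiction that $H^{1}(E) \neq 0$. By Serre duality, $H^{1}(E) \cong \operatorname{Hom}(E,K)^{\vee}$, where $K$ is the canonical line bundle on $C$, so there exists a nonzero map $\phi: E \to K$. Its image is a nonzero coherent subsheaf of the line bundle $K$, hence a line subbundle $L \subseteq K$ with $\deg L \leq \deg K = 2g-2$. Let $F = \ker\phi$; since $E/F \cong L$ is torsion-free, $F$ is a subbundle of $E$ of rank $r-1$ and degree $D - \deg L$. Semistability of $E$ gives $\mu(F) \leq \mu(E)$, i.e.\ $(D-\deg L)/(r-1) \leq D/r$, which simplifies to $D \leq r\deg L \leq r(2g-2)$. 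Since the hypothesis gives $D > r(2g-1) > r(2g-2)$, this is a contradiction, so $H^{1}(E)=0$.

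For (2), I need to show that for every closed point $p \in C$, the evaluation map $H^{0}(E) \to E \otimes k(p)$ is surjective. From the short exact sequence
\[
0 \to E(-p) \to E \to E\otimes k(p) \to 0
\]
the long exact sequence in cohomology gives
\[
H^{0}(E) \to E\otimes k(p) \to H^{1}(E(-p)) \to H^{1}(E),
\]
so it is enough to show $H^{1}(E(-p))=0$. The bundle $E(-p)$ has rank $r$ and degree $D-r$, and it is again semistable because tensoring with a line bundle shifts every slope by the same constant and therefore preserves the semistability inequality. The argument of (1) in fact only used $\deg E > r(2g-2)$ to derive a contradiction; since $D - r > r(2g-1) - r = r(2g-2)$, the same argument applied to $E(-p)$ yields $H^{1}(E(-p))=0$, completing the proof.

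There is no real obstacle here: the only mild subtlety is noticing that the slope bound needed for the Serre duality argument is $\mu > 2g-2$ (not $2g-1$), which is exactly what survives after twisting by $\mathcal{O}(-p)$ under the hypothesis $D > r(2g-1)$. Thus (1) and (2) are really the same statement applied to $E$ and to $E(-p)$ respectively.
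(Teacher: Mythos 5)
Your proof is correct and is the standard argument: the paper itself states Lemma \ref{lem21} without proof, referring to the cited literature on moduli of bundles over curves, and the Serre-duality-plus-semistability argument for (1) followed by the twist by $\mathcal{O}_{C}(-p)$ for (2) is exactly the proof found there. The only cosmetic point is the case $r=1$, where $F=\ker\phi=0$ and the inequality $(D-\deg L)/(r-1)\le D/r$ is meaningless; but there a nonzero map $E\to K$ is impossible outright since $\deg E > 2g-2 = \deg K$, and in the paper's setting $r\ge 2$ anyway.
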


For large enough $m$, the degree of $E(m)=E \otimes \mathcal{O}_{C}(m)$ is greater than $r(2g-1)$ and the lemma allows us to write it as a quotient of a trivial vector bundle $\mathbb{E}=\bigoplus_{N} \mathcal{O}_{C}$ over $C$. To determine the necessary rank $N$ of this trivial vector bundle, which
is the same as $h^{0}(E(m))$, we can simply use the Riemann-Roch
theorem:
\begin{thm}
\textbf{(Riemann-Roch)} Let $E$ be a vector bundle over a curve $C$ with genus $g \geq 2$. Then we have:
\[
 \chi(E)=h^0(E)-h^1(E)=(1-g)rk(E)+deg(E).
\]
\end{thm}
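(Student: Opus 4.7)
The plan is to reduce to the classical Riemann--Roch theorem for line bundles by induction on the rank $r = \mathrm{rk}(E)$. The base case $r=1$ is the classical statement $\chi(L) = \deg(L) + 1 - g$ for a line bundle $L$ on $C$, which I will take as known.

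For the inductive step, suppose the formula holds for all vector bundles of rank at most $r-1$, and let $E$ be a vector bundle of rank $r \geq 2$. The first task is to produce a short exact sequence
\[
0 \longrightarrow L \longrightarrow E \longrightarrow F \longrightarrow 0
\]
in which $L$ is a line bundle and $F$ is a vector bundle of rank $r-1$. To do this, twist by a sufficiently positive line bundle so that $E(m)$ has a nonzero global section; this yields an injection $\mathcal{O}_C \hookrightarrow E(m)$, hence $L_0 := \mathcal{O}_C(-m) \hookrightarrow E$. The cokernel $E/L_0$ may have torsion, but on the one-dimensional nonsingular variety $C$ every torsion-free coherent sheaf is locally free, so replacing $L_0$ by its saturation $L$ in $E$ gives a line subbundle $L \subset E$ with locally free quotient $F = E/L$ of rank $r-1$.

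Next I will use the two standard additivity properties on short exact sequences. First, the Euler characteristic is additive because cohomology gives a long exact sequence whose alternating sum of dimensions vanishes, so
\[
\chi(E) = \chi(L) + \chi(F).
\]
Second, the degree is additive because $\det(E) \cong L \otimes \det(F)$, giving $\deg(E) = \deg(L) + \deg(F)$. Applying the inductive hypothesis to $L$ (rank $1$) and to $F$ (rank $r-1$) yields
\[
\chi(E) = \bigl(\deg(L) + 1 - g\bigr) + \bigl(\deg(F) + (r-1)(1-g)\bigr) = \deg(E) + r(1-g),
\]
which is the desired identity.

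The only nontrivial step is producing the line subbundle $L$ with locally free quotient; this is the crux of reducing the higher-rank statement to the classical one. Once one observes that torsion-free sheaves on a smooth curve are locally free (so saturating any line subsheaf of $E$ yields a line subbundle with locally free quotient), the remaining argument is a formal induction using additivity of $\chi$ and of $\deg$ on short exact sequences.
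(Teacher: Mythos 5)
Your proof is correct: the reduction to the rank-one case by choosing a line subbundle $L \subset E$ whose quotient is locally free (using that torsion-free sheaves on a smooth curve are locally free to saturate the image of $\mathcal{O}_C(-m) \hookrightarrow E$), followed by additivity of $\chi$ and of $\deg$ on short exact sequences, is the standard induction-on-rank argument. The paper states this Riemann--Roch formula as a known background fact without proof, so there is no argument in the paper to compare against; your write-up supplies exactly the classical justification, and the only step deserving the emphasis you gave it is indeed the existence of the line subbundle with locally free quotient.
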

Let $r=rk(E)$, and $d=deg(\mathcal{O}_{C}(1))$ where $\mathcal{O}_{C}(1)$ is a fixed very ample line bundle on $C$. Note that using the Riemann-Roch theorem, we have
\[
 \chi(E(m))=(1-g)r+deg(E(m))=(1-g)rk(E)+deg(E)+rdm.
\]
Hence specifying the rank and degree of a vector bundle determines its Hilbert polynomial $P(m)$.

Let $\mathbb{E}=\bigoplus_{N} \mathcal{O}_{C}$ be as above, and let $P$ be a linear polynomial with integer coefficients. We denote by $Q=Q(\mathbb{E},P)$ the family of all coherent sheaves $\mathcal{F}$ on $C$ together with a surjection $\mathbb{E} \to \mathcal{F}$ such that the Hilbert polynomial of $\mathcal{F}$ is $P$. Now, the main tool in the construction is the following theorem of Grothendieck. (See Th\'{e}or\`{e}me 3.1, \cite{aG61}.) The statement below is from Theorem 6.1, \cite{cS67}.
\begin{thm}\label{groth}
\textbf{(Grothendieck)} There is a unique projective algebraic variety structure on $Q=Q(\mathbb{E},P)$, and a surjection $\theta:p_{1}^{*}(\mathbb{E}) \to \mathcal{E}$ of coherent sheaves on $C \times Q$, where $p_{1}$ is the
canonical projection $C \times Q \to C$, such that:

1: $\mathcal{E}$ is flat over $Q$;

2: the restriction of the homomorphism $\theta:p_{1}^{*}(\mathbb{E}) \to \mathcal{E}$ to $C \times q \cong C$, $q \in Q$; when viewed as a surjection $\mathbb{E} \to \mathcal{E}|_{C \times q}$, corresponds to the element of $Q(\mathbb{E},P)$ represented by $q$;

3: given a surjection $\phi:p_{1}^{*}(\mathbb{E}) \to G$ of coherent sheaves on $C \times T$, where $T$ is an algebraic scheme such that $G$ is flat over $T$, and the Hilbert polynomial of the restriction of $G$ to $C \times t \cong C$ is $P$, there exists a unique morphism $f:T \to Q$ such that $\phi:p_{1}^{*}(\mathbb{E})
\to G$ is the inverse image of $\theta:p_{1}^{*}(\mathbb{E}) \to \mathcal{E}$ by the morphism $f$.
\end{thm}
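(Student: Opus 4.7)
The plan is Grothendieck's classical one: for $m$ sufficiently large, twist by $\mathcal{O}_C(m)$ and take global sections to embed the set of quotients into a Grassmannian, then cut out $Q$ as a locally closed subscheme by the flatness and Hilbert polynomial conditions, and verify the universal property via cohomology and base change.

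First, boundedness and uniform regularity. I would establish an integer $m_0$ such that for every surjection $\phi:\mathbb{E} \twoheadrightarrow \mathcal{F}$ on $C$ with $\mathcal{F}$ coherent of Hilbert polynomial $P$, both $\mathcal{F}(m)$ and the kernel $\mathcal{K}(m) = \ker(\phi)(m)$ are globally generated with vanishing $H^1$ for all $m \geq m_0$. On the curve $C$ this is tractable: fixing the linear polynomial $P$ pins down the rank and degree of $\mathcal{F}$, which bounds the Harder--Narasimhan polygons of $\mathcal{F}$, and of $\mathcal{K}$, uniformly. Once $m_0$ is fixed, Riemann--Roch gives $h^0(\mathcal{F}(m)) = P(m)$ for every such $\mathcal{F}$ and every $m \geq m_0$.

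Second, the Grassmannian embedding. Fix $m \geq m_0$ large enough that $\mathbb{E}(m)$ is also $0$-regular, and set $V = H^0(\mathbb{E}(m))$, a finite-dimensional $k$-vector space. Each quotient $\mathbb{E} \twoheadrightarrow \mathcal{F}$ in the family yields a quotient $V \twoheadrightarrow H^0(\mathcal{F}(m))$ of dimension $P(m)$, hence a $k$-point of the Grassmannian $\mathrm{Gr}(P(m),V)$; conversely, by $0$-regularity the sheaf $\mathcal{F}$ is recovered from this linear-algebra datum, so on closed points the assignment is injective. To promote it to a scheme morphism, I would begin from the tautological quotient on $C \times \mathrm{Gr}(P(m),V)$, form the associated universal cokernel, and appeal to Mumford's flattening stratification to extract the stratum over which that cokernel is flat with Hilbert polynomial $P$; this stratum is declared to be $Q$, with $\theta$ the restriction of the universal cokernel.

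Third, the universal property. Conditions (1) and (2) hold by construction. For (3), given $\phi: p_1^*\mathbb{E} \twoheadrightarrow \mathcal{G}$ on $C \times T$ with $\mathcal{G}$ flat over $T$ and fiberwise Hilbert polynomial $P$, cohomology and base change (using fiberwise $0$-regularity, justified by the boundedness step) yields a locally free quotient $V \otimes_k \mathcal{O}_T \twoheadrightarrow p_{T*}(\mathcal{G}(m))$ of rank $P(m)$, hence a morphism $f: T \to \mathrm{Gr}(P(m),V)$; the flatness and Hilbert-polynomial hypotheses force $f$ to factor through $Q$, and the natural comparison $\phi \to (f \times \mathrm{id}_C)^*\theta$ is a fiberwise isomorphism, hence an isomorphism. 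Uniqueness of $f$ is automatic since its underlying map on closed points is pinned down by $\mathcal{G}$. The main obstacle is the boundedness/uniform regularity step: without a single $m_0$ serving the entire family, the Grassmannian $\mathrm{Gr}(P(m),V)$ has no meaning as a uniform ambient space and the construction cannot begin; everything after boundedness is essentially bookkeeping on top of cohomology and base change together with the flattening stratification.
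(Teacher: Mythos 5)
The paper does not prove this statement at all: it is quoted verbatim as Grothendieck's theorem, with references to Th\'eor\`eme 3.1 of \cite{aG61} and Theorem 6.1 of \cite{cS67}, and is then simply used. So there is no internal argument to compare against; what you have written is a reconstruction of the classical existence proof for the Quot scheme, and as an outline it is the right one: uniform $m$-regularity/boundedness for the family of quotients (which on a curve does follow from the slope bounds you indicate, since quotients of $\mathcal{O}_C^{N}$ have minimal slope bounded below and kernels, being subsheaves of $\mathcal{O}_C^{N}$, have maximal slope at most $0$), the embedding into $\mathrm{Gr}(P(m),H^0(\mathbb{E}(m)))$ via $H^0$ of a twist, the flattening stratification to cut out $Q$ together with its universal quotient $\theta$, and cohomology and base change to produce and compare the classifying morphism in part (3). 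This is exactly the route taken in the cited sources, so your proposal is consistent with what the paper relies on, just carried out rather than cited.

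One step of your write-up is justified incorrectly, though the claim itself is true and easily repaired: the uniqueness of $f$ in (3) is \emph{not} ``automatic since its underlying map on closed points is pinned down by $\mathcal{G}$.'' A morphism from a non-reduced scheme $T$ (e.g.\ the spectrum of an Artinian ring, which has a single closed point) is far from determined by its values on closed points, and the theorem is needed precisely for such $T$ to get the scheme structure on moduli. The correct argument is functorial: by cohomology and base change the quotient $V\otimes_k\mathcal{O}_T \twoheadrightarrow p_{T*}(\mathcal{G}(m))$ is canonically attached to $\phi$, any $f$ with $(f\times\mathrm{id}_C)^*\theta\cong\phi$ must induce this quotient, and the universal property of the Grassmannian then forces $f$ to be the corresponding morphism $T\to\mathrm{Gr}(P(m),V)$, which factors through $Q$. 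Relatedly, injectivity of the assignment on closed $k$-points is not what establishes representability; what is needed (and what your base-change discussion in the third step essentially supplies) is the identification of the functor of $T$-points of the stratum $Q$ with the Quot functor, so you should phrase the conclusion at that level rather than pointwise.
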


Note that the group $GL(N)$ may be identified with the group of automorphisms of $\mathbb{E}$, hence $GL(N)$ acts on $Q$, and also on the sheaf $\mathcal{E}$. The action of $GL(N)$ on $Q$ goes down into an action of $PGL(N)$, but it does not go into an action of $PGL(N)$ on $\mathcal{E}$. The scalar multiples of identity act as scalar multiplication on $\mathcal{E}$. (This is the reason why the moduli space for vector bundles over curves is not fine in general.)

Let $R^s$ be the subset of $Q$ consisting of those $x$ in $Q$ for which the bundle $\mathcal{E}_{x}$ is stable. This is an open subset of $Q$ on which $PGL(N)$ acts freely and hence has a quotient. This quotient is the moduli space $\mathcal{M}(r,D')$.

Now we discuss some properties of $\mathcal{E}$. We will assume that the rank $r$ and the degree $D$ are given, and that the vector bundles can be twisted by $\mathcal{O}_{C}(m)$ to make their degrees $D'$ larger than $r(2g-1)$, as required in \ref{lem21}. By Theorem 5.3, \cite{peN78}, the bundle $\mathcal{E}$ has the \emph{local universal property} for families of bundles of rank $r$ and degree $D'$ which satisfy conditions (1) and (2) in \ref{lem21}: Let $\mathcal{F}$ be a family of vector bundles over $C$ with rank $r$, degree $D'$ and satisfying (1) and (2); parametrized by a scheme $T$ and flat over $T$. Then we can cover $T$ with open subsets $T_i$ and we can find maps $f_i: T_i \to Q$ such that $\mathcal{F}$ is isomorphic to $(id_C \times f_i)^* \mathcal{E}$. We do \emph{not} require the maps $f_i$ to be unique.

Let $U$ denote the subset of $\mathcal{M}(r,r(d+g-1))$ consisting of vector bundles $E$ over $C$ such that $H^{0}(E(-1))=0$. We prove that this is a nonempty open subset. The fact that it is nonempty was proven in \cite{mvdB87}, Theorem 2.4. To prove that it is open, we look at the subset $\Omega$ of $R^s$ consisting of vector bundles with the same property. This is a smooth subset. (For details, see \cite{peN78} and \cite{jL97}.) Then $PGL(N)$ acts freely on $\Omega$ and we can take the GIT-quotient to construct $U$.

\begin{lem}
$\Omega$ is open in $R^s$.
\end{lem}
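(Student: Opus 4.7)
The plan is to apply the semicontinuity theorem for cohomology in flat families. By Theorem \ref{groth}, the universal sheaf $\mathcal{E}$ on $C \times Q$ is flat over $Q$, and in particular its restriction to $C \times R^s$ is flat over $R^s$. Twisting by the pullback of $\mathcal{O}_C(-1)$ via the first projection preserves flatness, so $\mathcal{E}(-1) := \mathcal{E} \otimes p_1^* \mathcal{O}_C(-1)$ is a coherent sheaf on $C \times R^s$, flat over $R^s$, whose fiber over a closed point $x \in R^s$ is exactly $\mathcal{E}_x(-1)$.

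The semicontinuity theorem (Hartshorne III.12.8) then says that the function
\[
 h:R^s \To \mathbb{Z}_{\geq 0},\qquad x \longmapsto h^0(C, \mathcal{E}_x(-1))
\]
is upper semicontinuous. Hence for every integer $n$, the set $\{x \in R^s : h(x) \leq n\}$ is open. Taking $n=0$ gives that $\{x \in R^s : h^0(\mathcal{E}_x(-1)) = 0\}$ is open in $R^s$. This set is precisely $\Omega$ by definition, so $\Omega$ is open in $R^s$.

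There is no real obstacle; the only point worth noting is that one must justify passage to non-closed points as well, but this is automatic since semicontinuity is stated for all points of the base (or, equivalently, since the open subset of $R^s$ determined by the closed points of the semicontinuity locus pulls back correctly because $R^s$ is a variety over the algebraically closed field $k$). One also checks that the twist is harmless: $p_1^*\mathcal{O}_C(-1)$ is a line bundle, hence locally free, so tensoring with it preserves $R^s$-flatness, and its restriction to the fiber $C \times \{x\}$ is $\mathcal{O}_C(-1)$, giving the correct fiberwise interpretation.
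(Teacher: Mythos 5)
Your proof is correct and is essentially the paper's own argument: both apply the semicontinuity theorem (Hartshorne III.12.8) to the universal sheaf twisted by the pullback of a line bundle from $C$, using that it is coherent and flat over $R^s$ by Grothendieck's theorem, so that the locus where $h^0$ of the twisted fibers vanishes is open. The only cosmetic difference is the twist: the paper uses $\mathcal{O}_C(-m-1)$ because in its Quot-scheme setup the parametrized bundles are the $m$-fold twists $E(m)$, while you twist by $\mathcal{O}_C(-1)$, which is the same argument under the convention that the fibers of $\mathcal{E}$ are the bundles $E$ themselves.
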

\begin{proof}
Note that the second projection $C \times R^s \to R^s$ is a projective morphism.
Since any affine subset of $R^s$ is noetherian, we can restrict to an affine open
subset after choosing an affine open cover. We also note
that $\mathcal{E} \otimes p_C^* \mathcal{O}_{C}(-m-1)$ is a coherent sheaf on $C \times R^s$
and is flat over $R^s$, by Grothendieck's theorem. That the set
$\Omega$ is open in $R^s$ now follows from the
Semicontinuity Theorem, Thm. 12.8, \cite{rH77}.
\end{proof}

\subsection{The Clifford algebra and its representations}
Let $f(u,v)$ be a binary form of degree $d$ over $k$. We define the \emph{Clifford algebra} of $f$, denoted $C_f$ to be the associative $k$-algebra $k\{u,v\}/I$, where $I$ is the two-sided ideal generated by elements of the form $(\alpha u+\beta v)^d-f(\alpha,\beta)$, where $\alpha$ and $\beta$ are arbitrary elements of $k$. A \emph{representation} of $C_f$ is a $k$-algebra homomorphism $\phi:C_f \to M_m(F)$, where $F$ is a field extension of $k$. The integer $m$ is called the \emph{dimension} of the representation.

Let $C$ be the curve in $\mathbb{P}^2$ defined by the equation $w^d=f(u,v)$, where $u$, $v$ and $w$ are the projective coordinates. We assume that the binary form $f(u,v)$ does not have any repeated factors over an algebraic closure of $k$ and that the characteristic of $k$ does not divide $d$. With these assumptions, we have the following lemma:
\begin{lem}
$C$ is a smooth curve of degree $d$.
\end{lem}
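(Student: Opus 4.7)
The plan is to apply the Jacobian criterion for smoothness to the homogeneous defining polynomial $F(u,v,w) = w^d - f(u,v)$. Since $F$ is homogeneous of degree $d$, the statement about degree is immediate. For smoothness, I need to verify that at every point of $C$, at least one of the partial derivatives $F_u$, $F_v$, $F_w$ is nonzero.

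First I would dispose of the points of $C$ where $w \neq 0$. There $F_w = d \cdot w^{d-1}$, and since by assumption the characteristic of $k$ does not divide $d$, this is nonzero; so no such point is singular. The remaining case is $w = 0$, for which the defining equation forces $f(u,v) = 0$. Since $u$ and $v$ cannot both vanish (there is no point $(0\!:\!0\!:\!0)$ in $\mathbb{P}^{2}$), the point $(u\!:\!v)$ is a genuine zero of the binary form $f$.

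Now I would use the nondegeneracy of $f$. The key observation is that a zero $(u_0\!:\!v_0)$ of $f$ is a \emph{multiple} zero precisely when $f_u(u_0,v_0) = f_v(u_0,v_0) = 0$. One direction is the usual fact that a repeated factor of a polynomial forces its derivatives to vanish. For the other direction I would invoke the Euler relation $d \cdot f = u f_u + v f_v$, valid for homogeneous polynomials of degree $d$: if both partials vanish at $(u_0,v_0)$ and $f(u_0,v_0) = 0$, then along the affine chart $v = 1$ (or $u = 1$) the polynomial $f(u,1)$ has $u_0$ as a common zero of itself and its derivative, so the factor $(u - u_0)$ appears with multiplicity at least $2$ —contradicting the standing assumption that $f$ has no repeated factors over $\bar{k}$. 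Note this step is exactly where the hypothesis that $\mathrm{char}(k) \nmid d$ is needed a second time, in order to conclude $f = 0$ from $df = 0$ via Euler.

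Combining the cases: at any point of $C$ with $w \neq 0$ the partial $F_w$ is nonzero, and at any point with $w = 0$ at least one of $F_u = -f_u$, $F_v = -f_v$ is nonzero by nondegeneracy. Thus the Jacobian of $F$ is everywhere of full rank along $C$, and by the Jacobian criterion $C$ is smooth. I do not expect any real obstacles; the only subtlety worth spelling out carefully is the equivalence, via Euler's relation, between "$f$ has no repeated factors" and "$f$, $f_u$, $f_v$ have no common zero in $\mathbb{P}^{1}$," which is where both hypotheses (nondegeneracy of $f$ and $\mathrm{char}(k) \nmid d$) are used.
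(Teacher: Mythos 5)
Your proof is correct and follows essentially the same route as the paper: apply the Jacobian criterion to $w^d-f(u,v)$, use $\mathrm{char}(k)\nmid d$ to force $w=0$ at a would-be singular point, and then derive a contradiction with the absence of repeated factors of $f$. The only remark is that the appeal to Euler's relation is superfluous, since $f(u_0,v_0)=0$ already follows from the point lying on $C$ with $w=0$; the chart argument with $f$ and $f_u$ (or $f_v$) vanishing suffices, exactly as in the paper.
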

\begin{proof}
It is obvious that $C$ is a curve of degree $d$. To prove that it is smooth, consider the partial derivatives of the defining equation $w^d-f(u,v)$:

\begin{align*}
 \partial_w (w^d-f(u,v))&=dw^{d-1} \\
 \partial_u (w^d-f(u,v))&=-\partial_u f(u,v) \\
 \partial_v (w^d-f(u,v))&=-\partial_v f(u,v)
\end{align*}

It is now obvious that for a point $[u:v:w] \in C$ to be singular, $f(u,v)$ and both its partial derivatives have to vanish on it. But since we assumed that $f(u,v)$ has no repeated factors, this is not possible.
\end{proof}
From now on, $C$ will denote this curve. We note that the genus of $C$ is $g=(d-1)(d-2)/2$. Assuming that $d \geq 4$, we have $g \geq 2$. We also note that the map $[u:v:w] \mapsto [u:v]$ defines a degree $d$ map $p: C \to \mathbb{P}^1$.

We now prove that the rank of a representation of $C_f$ is divisible by $d$:
\begin{prop}
(\cite{HT88}, Proposition 1.1) Let $f$ be a binary form of degree $d$ over an infinite field $k$ with no repeated factors over an algebraic closure of $k$. If $\phi$ is a representation of the Clifford algebra $C_f$, then the degree $d$ of $f$ divides the rank of $\phi$.
\end{prop}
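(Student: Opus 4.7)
The plan is to translate the algebra relation into a polynomial identity about determinants and then apply unique factorization. Let $\phi : C_f \to M_m(F)$ be a representation, and set $A = \phi(u)$, $B = \phi(v)$. The defining relations of $C_f$ translate directly to the matrix identities
\[
(\alpha A + \beta B)^{d} = f(\alpha,\beta)\, I_m \quad \text{for every } \alpha,\beta \in k.
\]

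First I would take determinants on both sides. This gives $\det(\alpha A + \beta B)^{d} = f(\alpha,\beta)^{m}$ as an identity of elements of $F$ indexed by $(\alpha,\beta) \in k \times k$. Now let $g(\alpha,\beta) := \det(\alpha A + \beta B) \in F[\alpha,\beta]$; it is homogeneous of degree $m$ in $\alpha,\beta$. The identity $g(\alpha,\beta)^{d} = f(\alpha,\beta)^{m}$ holds for all $\alpha,\beta \in k$, and since $k$ is infinite, it promotes to an equality of polynomials in $F[\alpha,\beta]$:
\[
g(\alpha,\beta)^{d} = f(\alpha,\beta)^{m}.
\]

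Next I would pass to an algebraic closure $\overline{F}$ (which contains an algebraic closure $\overline{k}$). By the nondegeneracy hypothesis, $f$ factors in $\overline{F}[\alpha,\beta]$ as $f = c\,\ell_1 \cdots \ell_d$ with $c \in \overline{k}^{\times}$ and $\ell_1,\ldots,\ell_d$ pairwise non-associate linear forms. So the equation reads
\[
g^{d} = c^{m}\,\ell_1^{m} \cdots \ell_d^{m}
\]
in the unique factorization domain $\overline{F}[\alpha,\beta]$. Unique factorization forces $g = c'\,\ell_1^{e_1} \cdots \ell_d^{e_d}$ for some $c' \in \overline{F}^{\times}$ and nonnegative integers $e_1,\ldots,e_d$, and then matching exponents of each $\ell_i$ gives $d\,e_i = m$ for each $i$. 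In particular $d \mid m$, which is the claim.

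There is very little obstacle here: the main point requiring care is the passage from the pointwise identity $\det(\alpha A+\beta B)^d = f(\alpha,\beta)^m$ on $k \times k$ to an identity of polynomials, which is exactly where the standing hypothesis that $k$ be infinite is used. Everything else is the unique factorization argument, together with the fact that $f$ has no repeated factors over $\overline{k}$, which is precisely the nondegeneracy assumption on $f$.
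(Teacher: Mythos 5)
The paper does not prove this proposition; it is quoted from \cite{HT88}, Proposition~1.1, so there is no in-paper proof to compare against. Your determinant argument is correct and is, to my knowledge, essentially the argument of Haile and Tesser: take $\det$ in $(\alpha A+\beta B)^d=f(\alpha,\beta)I_m$, use that $k$ is infinite to upgrade the pointwise equality on $k\times k$ to the polynomial identity $g^d=f^m$ in $F[\alpha,\beta]$ with $g=\det(\alpha A+\beta B)$, then compare factorizations in the UFD $\overline{F}[\alpha,\beta]$, where the squarefree hypothesis makes $f$ a product of $d$ pairwise non-associate linear forms; matching exponents gives $de_i=m$ for each $i$, hence $d\mid m$. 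The only microscopic point you leave tacit is that $g\neq 0$ (immediate since $g^d=f^m\neq 0$), which guarantees the factorization step applies; otherwise the argument is complete and tight.
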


We now want to describe representations of $C_f$ in more detail. Let $\phi:C_f \to M_m(k)$ be a representation. Let $R=k[u,v]$ have the standard grading and let $S=k[u,v,w]/(w^d-f(u,v))$. Note that $X=\emph{Proj }S$, and the map $p$ is induced by the inclusion $R \to S$. Let $\alpha_u=\phi(u)$ and $\alpha_v=\phi(v)$. These two matrices define a map of graded algebras $\phi_f: S \to M_m(R)$ by sending $u$ to $u I_m$, $v$ to $v I_m$ and $w$ to $u \alpha_u + v \alpha_v$. Conversely, if we have such a map, we can define a representation of $f$ by taking $\phi_f(u I_m)$ and $\phi_f(v I_m)$. Via this map, $R^m$ becomes a graded $S$-module. In this way, we get a vector bundle $E$ over $X$ such that $p_* E$ is trivial of rank $m$. This vector bundle $E$ also satisfies $H^0(E(-1))=0$.

\subsection{Azumaya algebras}\label{azumaya}
We follow the discussions in \cite{dS99} and \cite{jsM80} for this review. Let $A$ be an algebra over a commutative ring $R$. We assume that $R$ is the center of $A$. Then $A$ is called \emph{Azumaya} over $R$ if $A$ is faithful, finitely generated and projective as an $R$-module and the map $\phi_{A}: A \otimes_{R} A^{\circ} \to End_{R}(A)$ defined by
\[
 \phi_{A}(\sum_i r_i \otimes s_i)(r)=\sum_i r_i r s_i
\]
is an isomorphism.

The following proposition will be useful later:
\begin{prop}
Let $A$ and $B$ be Azumaya algebras over $R$. Then $A \otimes_R B$ is Azumaya over $R$.
\end{prop}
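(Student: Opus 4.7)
The plan is to verify each of the three defining conditions for $C := A \otimes_R B$ to be Azumaya over $R$: that $C$ is faithful, finitely generated, and projective as an $R$-module; that its center is $R$; and that $\phi_C : C \otimes_R C^{\circ} \to \mathrm{End}_R(C)$ is an isomorphism. The module-theoretic properties are immediate from the corresponding properties of $A$ and $B$: a tensor product of finitely generated projective $R$-modules is finitely generated projective, and once $C$ is projective, the unit map realizes $R$ as an $R$-direct summand of $C$, which gives faithfulness.

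For the isomorphism property, I would display $\phi_C$ as a composition of natural maps,
\[
C \otimes_R C^{\circ} \cong (A \otimes_R A^{\circ}) \otimes_R (B \otimes_R B^{\circ}) \xrightarrow{\phi_A \otimes \phi_B} \mathrm{End}_R(A) \otimes_R \mathrm{End}_R(B) \longrightarrow \mathrm{End}_R(A \otimes_R B),
\]
using the canonical identification $(A \otimes_R B)^{\circ} \cong A^{\circ} \otimes_R B^{\circ}$ and the obvious reshuffle of tensor factors. The middle arrow is an isomorphism because $A$ and $B$ are Azumaya, and the last arrow is an isomorphism because $A$ and $B$ are finitely generated projective over $R$, so the natural map $\mathrm{End}_R(A) \otimes_R \mathrm{End}_R(B) \to \mathrm{End}_R(A \otimes_R B)$ is an isomorphism. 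Tracking the image of an elementary tensor $(a \otimes b) \otimes (a' \otimes b')$ confirms that this composition coincides with $\phi_C$.

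For the center, one computes $Z(C) = Z(A) \otimes_R Z(B) = R \otimes_R R = R$; the first equality reduces by localization (both projectivity and tensor product being local) to the case where $A$ and $B$ are free $R$-modules, in which a direct calculation shows that $\sum a_i \otimes b_j$ commutes with both $A \otimes 1$ and $1 \otimes B$ precisely when each $a_i \in Z(A)$ and each $b_j \in Z(B)$. The main obstacle is essentially bookkeeping: verifying that the reshuffle isomorphism and the natural $\mathrm{End}$-tensor map really do intertwine $\phi_C$ with $\phi_A \otimes \phi_B$, and carrying out the centralizer computation carefully enough to conclude $Z(C) = R$. Neither step is deep, but both require attention to detail to avoid sign-or-order mistakes.
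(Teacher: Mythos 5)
The paper states this proposition without proof, as a standard fact about Azumaya algebras taken from the references it reviews in that section, so there is no in-text argument to compare against; your proposal supplies the standard textbook proof and is essentially correct. The factorization of $\phi_{A\otimes_R B}$ through $\phi_A\otimes\phi_B$ and the natural map $\mathrm{End}_R(A)\otimes_R\mathrm{End}_R(B)\to \mathrm{End}_R(A\otimes_R B)$ (an isomorphism because $A$ and $B$ are finitely generated projective), together with the elementary-tensor check, is exactly the usual argument, and reducing the center computation to the free case by localization is also standard. Two small points should be tightened. First, projectivity of $C=A\otimes_R B$ alone does not make $R\cdot 1_C$ an $R$-direct summand; what you actually use is that $R\to A$ is a split injection of $R$-modules because $A$ is faithful, finitely generated projective of everywhere positive rank (equivalently, because $A$ is Azumaya) — or, more simply, faithfulness of $C$ only requires injectivity of $r\mapsto r(1_A\otimes 1_B)$, which follows by tensoring the split injection $R\to A$ with $B$ and composing with the injection $R\to B$. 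Second, the centralizer claim as phrased ("each $a_i\in Z(A)$ and each $b_j\in Z(B)$") is not well-posed, since the expression of an element of $A\otimes_R B$ as a sum of elementary tensors is not unique; the correct form of the computation fixes an $R$-basis $\{e_j\}$ of $B$ after localizing, writes a central element as $z=\sum_j a_j\otimes e_j$, uses commutation with $A\otimes 1$ to force each $a_j\in Z(A)=R$, rewrites $z=1\otimes b$, and then uses commutation with $1\otimes B$ to force $b\in Z(B)=R$. With these adjustments your proof is complete and matches the argument in the sources the paper cites.
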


By a construction similar to obtaining a quasicoherent sheaf over $Spec\ R$ using an $R$-module $M$, given an Azumaya algebra $A$ over $R$, we can obtain a sheaf of algebras over $Spec\ R$. We can use this as the motivation for the definition:
\begin{defn}\label{defn-azumaya}
Let $X$ be a scheme. An $\mathcal{O}_{X}$-algebra $A$ is called an \emph{Azumaya algebra} over $X$ if it is coherent as an $\mathcal{O}_{X}$-module and if, for all closed points $x$ of $X$, $A_{x}$ is an Azumaya algebra over the local ring $\mathcal{O}_{X,x}$.
\end{defn}
The conditions in Definition \ref{defn-azumaya} imply that $A$ is locally free of finite rank as an $\mathcal{O}_{X}$-module.

Instead of using the definition to prove that an $\mathcal{O}_X{}$-algebra is Azumaya, we will make use of the following proposition:
\begin{prop}\label{flat}
Let $A$ be an $\mathcal{O}_X{}$-algebra that is of finite type as an $\mathcal{O}_{X}$-module. Then $A$ is an Azumaya algebra over $X$ if and only if there is a flat covering $(U_i \to X)$ of $X$ such that for each $i$, $A \otimes_{\mathcal{O}_X{}} \mathcal{O}_{U_i} \cong M_{r_i}(\mathcal{O}_{U_i})$ for some $r_i$.
\end{prop}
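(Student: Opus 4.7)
The plan is to prove the two implications separately: the ``if'' direction by faithfully flat descent, and the ``only if'' direction by reducing to the structure theory of Azumaya algebras over Henselian local rings.

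For the ``if'' direction, suppose that $A$ restricts to a matrix algebra on some flat cover $(U_i \to X)$. I would first verify directly that $M_r(R)$ is Azumaya over any commutative ring $R$: it is faithful, finitely generated and free as an $R$-module of rank $r^2$, and the canonical map $\phi_{M_r(R)}$ is a morphism between two free $R$-modules of rank $r^4$ whose surjectivity is immediate from the fact that the elementary matrix units generate all endomorphisms, hence it is an isomorphism. Thus each restriction $A|_{U_i}$ is Azumaya over $\mathcal{O}_{U_i}$. The defining conditions (being of finite type, faithful, projective, and $\phi_A$ being an isomorphism) are all local in the fpqc topology, so they descend from the cover to $X$, showing that $A$ itself is Azumaya.

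For the ``only if'' direction, assume that $A$ is Azumaya over $X$. Fix a closed point $x\in X$; the fiber $A(x)=A_x\otimes_{\mathcal{O}_{X,x}}k(x)$ is a central simple $k(x)$-algebra, so it becomes a matrix algebra $M_r(k(x)')$ after some finite separable extension $k(x)'/k(x)$. The key step is to lift this splitting to an étale neighborhood of $x$: pass to the strict Henselization $\mathcal{O}_{X,x}^{sh}$, lift a primitive idempotent $e$ of the fiber $A(x)\otimes k(x)'$ via Hensel's lemma to an idempotent $\tilde e\in A\otimes\mathcal{O}_{X,x}^{sh}$, form the left ideal $A\tilde e$ (which is a locally free $\mathcal{O}_{X,x}^{sh}$-module of rank $r$ since the reduction has rank $r$ and $\mathcal{O}_{X,x}^{sh}$ is local Henselian), and observe that the natural action gives an algebra map $A\otimes\mathcal{O}_{X,x}^{sh}\to \operatorname{End}_{\mathcal{O}_{X,x}^{sh}}(A\tilde e)\cong M_r(\mathcal{O}_{X,x}^{sh})$ which is an isomorphism modulo the maximal ideal, hence an isomorphism by Nakayama (both sides being projective of rank $r^2$). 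Since the strict Henselization is the filtered colimit of étale neighborhoods of $x$, this isomorphism is already defined on some honest étale neighborhood $U_x\to X$, and the disjoint union of such $U_x$ as $x$ ranges over closed points of $X$ gives the required flat cover.

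The main obstacle is the lifting step in the second direction, specifically checking that the lifted idempotent really produces an algebra isomorphism with $M_r$ on an étale neighborhood (rather than only on the strict Henselization). Rather than reproving this from scratch, I would appeal to the classical structure theorem for Azumaya algebras over Henselian local rings as developed in Milne \cite{jsM80}, Chapter IV, and Saltman \cite{dS99}; the proposition then follows by patching these local trivializations together.
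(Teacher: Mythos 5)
The paper offers no proof of Proposition \ref{flat}: it is stated as a standard fact with references to \cite{dS99} and \cite{jsM80}, which is exactly where your argument lands, so your sketch (fppf descent of the defining conditions for the ``if'' direction; splitting the fibre, lifting an idempotent over the strict Henselization, and spreading out to an \'etale neighborhood for the ``only if'' direction) is the standard proof and is consistent with the paper's treatment. The only point worth flagging is that covering $X$ by \'etale neighborhoods of its \emph{closed} points suffices only because the schemes in play are Jacobson (locally of finite type over a field), which matches the paper's conventions and its closed-point formulation of Definition \ref{defn-azumaya}.
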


\begin{defn}\label{s-rep}
If $S$ is a $k$-scheme then an $S$-representation of dimension $n$ of $B$ is a pair $(\phi, \mathcal{O}_A)$, where $\mathcal{O}_A$ is a sheaf of Azumaya algebras of rank $n^2$ over $S$ and $\phi:B \to H^0(S,\mathcal{O}_A)$ is a ring homomorphism. Two representations $(\phi_1, \mathcal{O}_{A_1})$ and $(\phi_2, \mathcal{O}_{A_2})$ are called \emph{equivalent} if there is an isomorphism $\theta: \mathcal{O}_{A_1} \to \mathcal{O}_{A_2}$ of sheaves of rings such that $\phi_2=H^0(S,\theta) \circ \phi_1$.  A representation of $B$ is called \emph{irreducible} if the image of $B$ generates $\mathcal{O}_A$ locally. Let $\mathcal{R}ep_{n}(B,S)$ be the set of equivalence classes of irreducible $S$-representations of degree $n$ of $B$. This defines a contravariant functor $\mathcal{R}ep_{n}(B,-)$ from the category of $k$-schemes to the category of sets.
\end{defn}

\begin{thm}
(Theorem 4.1, \cite{mvdB89}) The functor $\mathcal{R}ep_{n}(B,-)$ is representable in $(Sch/k)$.
\end{thm}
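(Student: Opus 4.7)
The plan is to reduce representability of $\mathcal{R}ep_n(B,-)$ to the classical representability of the functor of $n$-dimensional \emph{matrix} representations of $B$, and then to pass to Azumaya-valued representations by a $PGL_n$-quotient that encodes the étale-local triviality of Azumaya algebras of rank $n^2$.

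First I would introduce the auxiliary functor $\mathcal{M}at_n(B,-)$ sending a $k$-scheme $S$ to the set of $k$-algebra maps $B \to M_n(\Gamma(S,\mathcal{O}_S))$. Choosing generators $\{b_\lambda\}$ of $B$ with defining relations $\{R_\mu\}$, this functor is represented by the closed affine subscheme $T \subset \prod_\lambda \mathrm{Mat}_{n\times n}$ cut out by the relations $R_\mu$ evaluated on the generic matrices; the tautological assignment gives a universal matrix representation $\Phi:B \to M_n(\mathcal{O}_T(T))$. Next I would isolate the irreducible locus $T^{\mathrm{irr}} \subset T$ where the image of $B$ generates $M_n$ on residue fields. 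This locus is open: a sufficiently large finite-dimensional subspace of $B$ induces a map of coherent sheaves on $T$ whose surjectivity is fiberwise equivalent to irreducibility, and surjectivity of a map of coherent sheaves is open by Nakayama. The conjugation action of $PGL_n$ preserves $T^{\mathrm{irr}}$, and Schur's lemma shows that the action is scheme-theoretically free there.

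The key technical step, and the principal obstacle in the proof, is to form the quotient $U = T^{\mathrm{irr}}/PGL_n$ as a $k$-scheme. One can approach this through Procesi's theorem (Theorem 1.8, Chapter 4 of \cite{cP73}), through Geometric Invariant Theory once a suitable linearization is verified, or through first constructing the quotient as an algebraic space (using freeness of the action) and then proving it is a scheme; each of these routes requires nontrivial work to verify separatedness and quasi-projectivity. Once $U$ exists, the projection $T^{\mathrm{irr}} \to U$ is an fppf (in fact étale) $PGL_n$-torsor, and twisting the constant sheaf $M_n$ by this torsor produces a sheaf of Azumaya algebras $\mathcal{A}$ of rank $n^2$ on $U$; the $PGL_n$-equivariance of $\Phi$ lets it descend to a $k$-algebra map $\psi:B \to H^0(U,\mathcal{A})$.

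Finally, to check that $(U,\psi,\mathcal{A})$ represents the functor, I would start from an arbitrary irreducible $S$-representation $(\phi,\mathcal{O}_{A'})$. By Proposition \ref{flat} there is a flat cover $\{S_i \to S\}$ on which $\mathcal{O}_{A'}|_{S_i} \cong M_n(\mathcal{O}_{S_i})$; each trivialization converts $\phi|_{S_i}$ into a matrix representation, which by the universal property of $T$ yields a unique morphism $S_i \to T^{\mathrm{irr}}$. Two trivializations differ by a $PGL_n$-valued cocycle, so the composites $S_i \to T^{\mathrm{irr}} \to U$ agree on overlaps and glue, via fppf descent of morphisms to a scheme, into a unique morphism $S \to U$; a routine check then identifies the pullback of $(\psi,\mathcal{A})$ with the given $(\phi,\mathcal{O}_{A'})$ up to equivalence, and verifies functoriality in $S$.
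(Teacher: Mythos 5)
The paper does not prove this statement: it is a cited theorem, attributed to Theorem 4.1 of \cite{mvdB89} (with the parallel reference to Procesi, Theorem 1.8 of Chapter 4 of \cite{cP73}), so there is no in-paper argument to compare against. Your strategy is nevertheless the standard one used in those sources: represent the functor of $n$-dimensional matrix representations by an affine scheme of generic matrices, pass to the open locus where the representation is irreducible, quotient by the free conjugation action of $PGL_n$, and recover general Azumaya-valued representations by fppf descent along the resulting $PGL_n$-torsor. Your reduction via the subfunctor of representations into endomorphism algebras of vector bundles is exactly the mechanism of van den Bergh's Lemma 4.2 quoted in the paper, and the twisting-by-torsor construction of $\mathcal{A}$ and the descent of the universal map $\Phi$ are correct.

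There are, however, two genuine gaps. First, the existence of the geometric quotient $T^{\mathrm{irr}}/PGL_n$ in $(Sch/k)$ is the substantive content of the theorem, and your proposal leaves it unestablished; worse, the first of your three suggested routes is circular, since Procesi's Theorem 1.8 is precisely the representability result being cited, so one cannot appeal to it to build the quotient without assuming the conclusion. The GIT and algebraic-space routes are viable but need to be carried out: one must actually exhibit a $PGL_n$-linearized ample bundle with $T^{\mathrm{irr}}$ contained in the stable locus, or verify effectivity of the algebraic-space quotient as a scheme. Second, ``Schur's lemma shows the action is scheme-theoretically free'' needs strengthening: freeness on geometric points is not the same as triviality of the stabilizer group scheme. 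The correct argument is that on $T^{\mathrm{irr}}$ the image of $B$ generates $M_n$ over any local ring (by Nakayama, using fiberwise irreducibility), so any element of $PGL_n(A)$ fixing the representation centralizes all of $M_n(A)$ and hence is trivial; this gives freeness over arbitrary local test rings, which is what a free group scheme action requires.
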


Sometimes it is more convenient and easier to work with representation into endomorphism sheaves of vector bundles. Let $\mathcal{G}_n(B,-)$ be the subfunctor of $\mathcal{R}ep_{n}(B,S)$ consisting of representations of endomorphism sheaves of vector bundles of rank $n$. This is not a sheaf with respect to the flat topology. However, its sheafification with respect to the flat topology is well-known:
\begin{lem}
(Lemma 4.2, \cite{mvdB89}) $\mathcal{R}ep_{n}(B,-) \cong \overline{\mathcal{G}}_n(B,-)$, where $\overline{\mathcal{G}}_n(B,-)$ denotes the sheafification of $\mathcal{G}_n(B,-)$.
\end{lem}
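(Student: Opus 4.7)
The plan is to exhibit a natural transformation $\Phi: \mathcal{G}_n(B,-) \to \mathcal{R}ep_n(B,-)$ and show it induces an isomorphism after sheafifying the source. The transformation is the obvious one: since $\mathcal{E}nd(\mathcal{E})$ for a rank $n$ vector bundle $\mathcal{E}$ is an Azumaya $\mathcal{O}_S$-algebra of rank $n^2$ (it is étale-locally $M_n(\mathcal{O}_S)$), any pair $(\phi,\mathcal{E}nd(\mathcal{E}))$ gives an element of $\mathcal{R}ep_n(B,S)$. Because $\mathcal{R}ep_n(B,-)$ is representable by Van den Bergh's theorem, it is in particular a sheaf in the flat topology, so $\Phi$ factors through a canonical map $\bar{\Phi}: \overline{\mathcal{G}}_n(B,-) \to \mathcal{R}ep_n(B,-)$. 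It remains to check that $\bar{\Phi}$ is an isomorphism on sections over every $k$-scheme $S$, which reduces, by the standard sheafification formalism, to two assertions about refinements by flat covers.

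For surjectivity of $\bar{\Phi}$, start with an arbitrary $(\phi,\OO_A) \in \mathcal{R}ep_n(B,S)$. By Proposition \ref{flat} there is a flat cover $(U_i \to S)$ on which $\OO_A \otimes_{\OO_S} \OO_{U_i} \cong M_n(\OO_{U_i}) = \mathcal{E}nd(\OO_{U_i}^{\,n})$. Pulling back $\phi$ along $U_i \to S$ and composing with this isomorphism yields a representation of $B$ into $\mathcal{E}nd$ of a rank $n$ vector bundle over $U_i$, i.e.\ an element of $\mathcal{G}_n(B,U_i)$, whose image under $\Phi$ is the pullback of $(\phi,\OO_A)$. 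This shows $(\phi,\OO_A)$ is locally (in the flat topology) in the image of $\Phi$, hence globally in the image of $\bar{\Phi}$.

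For injectivity of $\bar{\Phi}$, suppose $(\phi_1,\mathcal{E}nd(\EE_1))$ and $(\phi_2,\mathcal{E}nd(\EE_2))$ in $\mathcal{G}_n(B,S)$ become equivalent in $\mathcal{R}ep_n(B,S)$ via some algebra isomorphism $\theta: \mathcal{E}nd(\EE_1) \to \mathcal{E}nd(\EE_2)$ with $\phi_2 = H^0(\theta) \circ \phi_1$. A standard Skolem--Noether / Morita computation shows that any isomorphism of endomorphism sheaves of vector bundles is locally conjugation by an isomorphism $\EE_1 \to \EE_2 \otimes \LL$ for a uniquely determined line bundle $\LL$; concretely, $\LL = \mathcal{H}om(\EE_1,\EE_2)$ viewed as a $\theta$-equivariant module, and it is trivialized on any flat cover refining a trivialization of $\LL$. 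Passing to such a flat cover $(V_j \to S)$ we obtain honest isomorphisms of vector bundles that intertwine $\phi_1$ and $\phi_2$, so the two elements become equal in $\mathcal{G}_n(B,V_j)$ and hence define the same section of $\overline{\mathcal{G}}_n(B,-)$.

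The main obstacle is the injectivity step: one must convert the abstract algebra isomorphism $\theta$ into a concrete bundle isomorphism up to line bundle twist, and then trivialize that line bundle by a further flat refinement, all while keeping track of the condition $\phi_2 = H^0(\theta)\circ\phi_1$. This is really the content of Morita theory for Azumaya algebras; once it is in hand, sheafification compatibility and the descent of cocycles go through formally, because both functors are already sheaves and $\bar{\Phi}$ is a map of sheaves that is locally bijective.
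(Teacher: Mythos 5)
The paper offers no proof of this lemma to compare against: it is quoted verbatim from van den Bergh (Lemma 4.2 of \cite{mvdB89}). Your argument is the standard one and is essentially correct: $\mathcal{R}ep_{n}(B,-)$ is a sheaf for the flat topology (you deduce this from representability, which is legitimate and not circular here since representability is established independently; alternatively one can see it directly, because for \emph{irreducible} representations the equivalence isomorphism $\theta$ is unique --- any algebra automorphism fixing the image of $B$, which generates locally, is the identity --- so descent data glue), and the map $\mathcal{G}_n(B,-) \to \mathcal{R}ep_{n}(B,-)$ is locally surjective precisely by the splitting statement of Proposition \ref{flat}. Two points deserve tightening. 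First, in your injectivity step the line bundle is not $\mathcal{H}om(\EE_1,\EE_2)$ itself (that sheaf has rank $n^2$) but the subsheaf of $\theta$-intertwiners, i.e.\ local homomorphisms $g:\EE_1 \to \EE_2$ satisfying $g\,a=\theta(a)\,g$ for all local sections $a$ of $\mathcal{E}nd(\EE_1)$; Skolem--Noether shows this is invertible, and since line bundles are Zariski-locally trivial no further flat refinement is needed. Second, with the paper's definition $\mathcal{G}_n(B,-)$ is a \emph{subfunctor} of $\mathcal{R}ep_{n}(B,-)$, so the equivalence relation is inherited and injectivity of your $\Phi$ is automatic; the Morita/Skolem--Noether analysis is only required if one takes equivalence in $\mathcal{G}_n$ to mean conjugation by bundle isomorphisms, which is the finer relation your argument correctly reconciles with the coarser one. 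Finally, you should record that irreducibility is preserved under pullback (generation of fibers is preserved by base change), so the local sections produced in your surjectivity step really do land in $\mathcal{G}_n$.
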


We end this section with a lemma that will be useful later. Let $S$ be a scheme, and $\mathcal{O}$ be a sheaf of $\mathcal{O}_{S}$-algebras over $S$. A collection of global sections $(\sigma_i \in H^0(S,\mathcal{O}))_{i \in I}$ is said to generate $\mathcal{O}$ if the stalks $(\sigma_i)_s$ generate $\mathcal{O}_s$ for all $s \in S$.
\begin{lem}\label{sheaves_of_algebras}
Let $S$ and $\mathcal{O}$ be as above. If the $\sigma_i(s)$ generate the \emph{fibers} $\mathcal{O}(s)$ for \emph{closed} points $s \in S$, then the $\sigma_i$ generate $\mathcal{O}$.
\end{lem}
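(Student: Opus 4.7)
The statement is local on $S$, so I would reduce to $S = \operatorname{Spec} A$ affine. In the intended application $\mathcal{O}$ is a coherent (locally free) sheaf of algebras and $S$ is a quasi-projective variety over the algebraically closed field $k$, so $B := \Gamma(S,\mathcal{O})$ is finitely generated as an $A$-module and $A$ is a Jacobson ring. Let $\Sigma \subseteq B$ be the $A$-subalgebra generated by the $\sigma_i$. Since $B$ is Noetherian as an $A$-module, $\Sigma$ is finitely generated as an $A$-module, and $Q := B/\Sigma$ is a finitely generated $A$-module. Showing that the stalks $(\sigma_i)_s$ generate $\mathcal{O}_s = B_{\mathfrak{p}_s}$ at every point $s$ is equivalent to showing $\operatorname{Supp}(Q) = \emptyset$.

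For closed points I would invoke Nakayama's lemma. If $\mathfrak{m}$ is the maximal ideal corresponding to a closed point $s$, then
\[
Q_{\mathfrak{m}} \otimes_{A_{\mathfrak{m}}} k(s) \;=\; \mathcal{O}(s)/\Sigma(s),
\]
where $\Sigma(s)$ denotes the $k(s)$-subalgebra of the fiber $\mathcal{O}(s)$ generated by the images $\sigma_i(s)$. By hypothesis $\Sigma(s) = \mathcal{O}(s)$, so the above quotient vanishes and Nakayama yields $Q_{\mathfrak{m}} = 0$ for every closed $s$.

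To extend this to all points, I would use that $\operatorname{Supp}(Q)$ is a closed subset of $\operatorname{Spec} A$ (because $Q$ is finitely generated over $A$). The previous step shows that $\operatorname{Supp}(Q)$ contains no closed point; since $A$ is Jacobson, every nonempty closed subset of $\operatorname{Spec} A$ contains a closed point, and therefore $\operatorname{Supp}(Q) = \emptyset$. Consequently $\Sigma = B$, so the stalks $(\sigma_i)_s$ generate $\mathcal{O}_s$ as an $\mathcal{O}_{S,s}$-algebra for every $s \in S$, which is the required conclusion.

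The only real hurdle I anticipate is verifying the finiteness/Jacobson setup: without $\mathcal{O}$ being coherent one cannot apply Nakayama to the cokernel, and without $A$ Jacobson one cannot pass from closed points to all points. Both conditions are automatic in the Azumaya-over-a-variety setting in which the lemma is used, so once the reduction is in place the fiber-to-stalk step and the closed-points-to-all-points step are essentially routine.
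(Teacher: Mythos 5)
Your argument is correct and essentially coincides with the paper's proof: both reduce to the affine case, pass from fibers to stalks at closed points via Nakayama (using the coherence of $\mathcal{O}$, which you rightly make explicit), and then globalize from the maximal ideals. The only difference is the final local-to-global step, where the paper applies the fact that a module whose localizations at all maximal ideals vanish is zero (Corollary 2.9 of Eisenbud) to the quotient by the subalgebra generated by the $\sigma_i$, whereas you argue via closedness of the support of the finitely generated cokernel; note that the Jacobson hypothesis is not actually needed for that particular point, since any nonempty closed subset of $\operatorname{Spec} A$ already contains a closed point of $\operatorname{Spec} A$ (it matters only for identifying closed points of an affine chart with closed points of $S$, which the paper's standing conventions also use implicitly).
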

\begin{proof}
The statement is local in $S$, so we will assume that $S=Spec\ R$ for a ring $R$ and that $\mathcal{O}=A^\sim$ for an $R$-algebra $A$.

First, we claim that if the $\sigma_i(s)$ generate $\mathcal{O}_s$ for closed points $s$ as a $k(s)$-algebra, then the $(\sigma_i)_s$ generate $\mathcal{O}_s$ as an $R_s$-algebra. But this follows from Nakayama's lemma.

Second, we claim that if the $(\sigma_i)_s$ generate $\mathcal{O}_s$ for closed $s$, then they generate $\mathcal{O}_s$ for all $s \in S$. The $\sigma_i$ correspond to elements $a_i \in A$. We know that for all maximal ideals $\mathfrak{m}$ in $R$, $(a_i)_{\mathfrak{m}}$ generate $A_{\mathfrak{m}}$ as an $R_{\mathfrak{m}}$-algebra.

Let $B$ be the $R$-subalgebra of $A$ generated by the $a_i$. Then $B_{\mathfrak{m}}=A_{\mathfrak{m}}$. By Corollary 2.9, \cite{dE99}, $A=B$ and the lemma is proved.
\end{proof}

\section{Construction of the Universal Representation}\label{s:con}
In this section, we construct a sheaf of Azumaya algebras over the variety $U$ defined in the previous section. Recall that there is a coarse moduli space $\mathcal{M}(r,D)$ of vector bundles over the curve $C$ with rank $r$ and degree $D$; this is a quasiprojective variety. The variety $U$ is the open subset of $\mathcal{M}(r,D)$ consisting of stable vector bundles $E$ over $C$ such that $H^0(E(-1))=0$.

We first recall how to construct quotients of vector bundles. Let $Y$ be an integral algebraic variety and $G$ an algebraic group acting on $Y$. We have the following definition:
\begin{defn}
(Definition 8.4.3, \cite{jL97}.) An algebraic vector $G$-bundle $F \to
Y$ is a vector bundle over $Y$, equipped with a $G$-action which is
linear in each fiber and such that the diagram
\[
    \begin{CD}
        G \times F @>>> F\\
        @VVV            @VVV\\
        G \times Y @>>> Y
    \end{CD}
\]
commutes. In other words, for every $y \in Y$, we have a linear map $F_y \to F_{g.y}$

$F$ is said to \emph{descend} to $M$ if there is a vector bundle
$F'$ over $M$ such that the algebraic vector $G$-bundles $F$ and $\pi^{*}F'$ are
isomorphic.
\end{defn}
This definition can also be stated in terms of sheaves. Let $\mathcal{F}$ denote the sheaf of sections of the vector bundle $F$. For every $g \in G$ and every open subset $V$ of $Y$, we have a linear map $\mathcal{F}(V) \to \mathcal{F}(g.V)$. These maps are required to satisfy the obvious compatibility conditions.

The following lemma gives a necessary and sufficient condition for an algebraic vector $G$-bundle $F$ to descend to $M$:
(For the proof, see Theorem 2.3, \cite{DN89}.)
\begin{lem}
Let $G$, $Y$ and $M$ be as before. Let $F \to Y$ be an algebraic
vector $G$-bundle over $Y$. Then $F$ descends to $M$ if and only if for
every closed point $y$ of $Y$ such that the orbit of $y$ is closed, the
stabilizer of $G$ at $y$ acts trivially on $F_{y}$.
\end{lem}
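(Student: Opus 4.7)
My approach will be to handle the two implications of the equivalence separately. For the necessity, if $F \cong \pi^{*}F'$ for some vector bundle $F'$ on $M$, the $G$-action on $\pi^{*}F'$ is the canonical pullback action: for any $g \in G$ and $y \in Y$, the induced map $F_{y} \to F_{g \cdot y}$ is just the identification of $F'_{\pi(y)}$ with $F'_{\pi(g \cdot y)} = F'_{\pi(y)}$, which is the identity. In particular, when $g$ lies in the stabilizer $G_{y}$, this map is the identity on $F_{y}$. This direction needs no hypothesis on the orbit.

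For sufficiency, the natural candidate for the descended bundle is $F' := (\pi_{*} F)^{G}$, the sheaf of $G$-invariant sections over $\pi$-saturated opens in $M$. The plan is to show that the canonical morphism $\pi^{*}F' \to F$ is an isomorphism, which would force $F'$ to be locally free of the same rank as $F$. Since $\pi$ is a good quotient, the statement is \'etale-local on $M$, and I would reduce to an \'etale neighborhood of the image of a closed orbit $G \cdot y$. Luna's \'etale slice theorem supplies a $G_{y}$-stable affine slice $V \ni y$ for which, \'etale-locally, $Y \cong G \times^{G_{y}} V$ and $M \cong V / G_{y}$. Under this identification, descent of $F$ along $Y \to M$ becomes the descent of the associated $G_{y}$-equivariant vector bundle on $V$ along $V \to V / G_{y}$.

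The hypothesis that $G_{y}$ acts trivially on $F_{y}$ translates into triviality of the $G_{y}$-representation on the fiber of this restricted bundle at the image of $y$ in $V$. Using reductivity of $G_{y}$ (automatic from the closed-orbit hypothesis in the standard GIT setup, together with the characteristic assumptions on $k$), I would apply an averaging argument to produce a $G_{y}$-equivariant trivialization of the bundle on a $G_{y}$-stable neighborhood of $y$. This identifies the bundle there with a pullback from $V / G_{y}$, yielding the desired local descent, and the local pieces of $F'$ glue canonically because they all coincide with $(\pi_{*} F)^{G}$ on overlaps.

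The main obstacle will be the propagation step: promoting triviality of the $G_{y}$-representation on the single fiber $F_{y}$ to a $G_{y}$-equivariant trivialization of the bundle on a whole neighborhood. It is precisely here that reductivity of the stabilizer is used and where the closed-orbit hypothesis becomes essential (in its absence, a non-closed orbit can have nilpotent or non-linearizable stabilizer behavior that destroys the averaging argument). This step is the core of the proof given as Theorem 2.3 of \cite{DN89}.
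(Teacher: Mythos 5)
The paper itself offers no argument for this lemma: it is quoted with the proof delegated to Theorem 2.3 of \cite{DN89}, so there is nothing internal to compare your attempt with. Your necessity direction is fine and complete: with the canonical $G$-linearization of $\pi^{*}F'$, an element of the stabilizer of $y$ induces the identity on the fiber, and you correctly note this needs no closed-orbit hypothesis.

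The sufficiency direction, however, has a genuine gap. Your outline (take $F'=(\pi_{*}F)^{G}$, reduce \'etale-locally to a neighborhood of a closed orbit via a Luna slice, use reductivity of $G_{y}$ to upgrade triviality of the $G_{y}$-action on the single fiber $F_{y}$ to a $G_{y}$-equivariant trivialization of the bundle near $y$) is the standard strategy for Kempf-type descent, but the decisive step --- the averaging/complete-reducibility argument that produces the equivariant local trivialization and shows $\pi^{*}F'\to F$ is an isomorphism --- is precisely the content of the statement being proved, and you explicitly defer it to Theorem 2.3 of \cite{DN89} instead of carrying it out. As a self-contained proof this is therefore incomplete: you would need to actually perform the splitting (e.g.\ average an arbitrary local trivialization over the reductive stabilizer, verify that invariant sections form a locally free sheaf of rank $\mathrm{rk}(F)$ on the slice quotient, and check the counit map on fibers). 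In addition, the tools you invoke (Luna's \'etale slice theorem, Matsushima-type reductivity used through an averaging operator) require characteristic zero or linear reductivity of $G_{y}$, whereas the paper only assumes $\mathrm{char}(k)$ does not divide $d$; so either these hypotheses must be justified in the setting at hand (in the paper's application $PGL(N)$ acts freely on $\Omega$, so stabilizers are trivial and the difficulty largely disappears), or the slice argument must be replaced by one valid in the stated generality.
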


Recall that $\Omega$ is the subset of the Quot-scheme $Q$ consisting of vector bundles $E$ over the curve $C$ such that $H^0(E(-1))=0$. We also have the bundle $\mathcal{E}$ parametrizing quotients of the trivial vector bundle $\mathbb{E}$ having the given Hilbert polynomial $P$. Recall also that $GL(N)$ acts on $\Omega$, and the stabilizers of points are the scalar matrices. Hence, there is an induced action of $PGL(N)$ on $\Omega$, and the good quotient is the variety $U$.

When we try to take the quotient of $\mathcal{E}$ by $PGL(N)$, however; we are unable to define an action of $PGL(N)$ on $\mathcal{E}$ because of the fact that the scalar multiples of identity in $GL(N)$ do not act trivially.

We resolve this difficulty as follows. Consider the direct image of $\mathcal{E}$ under the projection $\pi: C \times \Omega \to \Omega$. Then consider the endomorphism bundle $\mathcal{E}nd(\pi_{*}\mathcal{E})$ and then define an action of $PGL(N)$ on it. (We prove that $\pi_{*}\mathcal{E}$ is a vector bundle below in Lemma \ref{pf}.) Since $GL(N)$ only acts on the second component of $C \times \Omega$, this gives a $GL(N)$-action on $\pi_{*}\mathcal{E}$. Now let $GL(N)$ act on $\mathcal{E}nd(\pi_{*}\mathcal{E})$ by conjugation, see below that the action of $GL(N)$ descends to a $PGL(N)$-action: The action of the stabilizer of a point on the fibers of $\pi_{*}\mathcal{E}$ is by scalar multiplication, but on $\mathcal{E}nd(\pi_{*}\mathcal{E})$, this action becomes trivial. So the action of $GL(N)$ on $\mathcal{E}nd(\pi_{*}\mathcal{E})$ descends to $PGL(N)$.

To be precise, let $g \in GL(N)$, $f \in \Gamma(V,\mathcal{E}nd(\pi_{*}\mathcal{E}))$ for an open subset $V \subset \Omega$. Then $f$ is an endomorphism of $\pi_{*}\mathcal{E}$ over $V$, and we define $g.f$ to be an endomorphism of $\pi_{*}\mathcal{E}$ over $g.V$ as follows. Let $s$ be a section of $\pi_{*}\mathcal{E}$ over an open subset $W \subset V$. Then $g.f$ is the endomorphism of $\pi_{*}\mathcal{E}$ that sends $s$ to $g(f(g^{-1}s))$. It is obvious that this defines a $GL(N)$-action on $\mathcal{E}nd(\pi_{*}\mathcal{E})$. Since a scalar matrix $\lambda I$ acts as multiplication by $\lambda$ on $(\pi_{*}\mathcal{E})_{x}$, it can easily be seen as acting trivially on $\mathcal{E}nd(\pi_{*}\mathcal{E})_{x}$:
\begin{align*}\label{pgln-action}
 (\lambda I \cdot f)(s) &= (\lambda I \circ f \circ (\lambda  I)^{-1})(s) \\
 &= \lambda I \circ f (\frac{1}{\lambda}s) \\
 &= \lambda I \circ \frac{1}{\lambda} f (s) \\
 &= \lambda \frac{1}{\lambda} f (s) \\
 &= f (s).
\end{align*}
This gives us a $PGL(N)$ action on $\mathcal{E}nd(\pi_{*}\mathcal{E})$. As seen above, we
have a quotient vector bundle $\mathcal{E}nd(\pi_{*}\mathcal{E})^{PGL(N)}$, which we will denote as $\mathcal{A}$ over $U$.

Recall that there is a canonical map $q:C \to \mathbb{P}^{1}$, corresponding to
the inclusion $k[u,v] \to k[u,v,w]/(w^{d}-f(u,v))$. Recall that for a closed point $y$ in $\Omega$ $\mathcal{E}_{y}$ denotes the
pull-back of $\mathcal{E}$ under the canonical map $id \times
i_{y}:C \times Spec\ k(y) \to C \times \Omega$.

We prove the following proposition, which follows from a standard
result of Grothendieck.
\begin{prop}\label{pushforward}
The coherent sheaf $q_{*}(\mathcal{E}_{y})$ is isomorphic to the trivial bundle $\bigoplus_{i=1}^{rd} \mathcal{O}_{\mathbb{P}^{1}_{k(y)}}$.
\end{prop}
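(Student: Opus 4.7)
The plan is to combine Grothendieck's splitting theorem on $\mathbb{P}^1$ with the vanishing hypothesis $H^0(\mathcal{E}_y(-1))=0$ and a Riemann--Roch computation. First I would observe that $q:C\to\mathbb{P}^1$ is a finite flat morphism of degree $d$ between smooth curves, so for the rank $r$ vector bundle $\mathcal{E}_y$ on $C$ the pushforward $q_*\mathcal{E}_y$ is locally free of rank $rd$ on $\mathbb{P}^1_{k(y)}$. By Grothendieck's splitting theorem, there exist integers $a_1,\ldots,a_{rd}$ such that
\[
q_*\mathcal{E}_y \;\cong\; \bigoplus_{i=1}^{rd}\mathcal{O}_{\mathbb{P}^1_{k(y)}}(a_i).
\]
The task reduces to showing that each $a_i$ equals $0$.

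Next I would use the hypothesis that $y\in\Omega$, i.e.\ $H^0(\mathcal{E}_y(-1))=0$. Since $\mathcal{O}_C(1)$ is the restriction of $\mathcal{O}_{\mathbb{P}^2}(1)$ to $C$, and the divisor of a linear form in $u,v$ on $\mathbb{P}^2$ cuts $C$ in a fiber of $q$, one has $q^*\mathcal{O}_{\mathbb{P}^1}(1)\cong \mathcal{O}_C(1)$. The projection formula then gives
\[
q_*\mathcal{E}_y(-1) \;=\; q_*\bigl(\mathcal{E}_y\otimes q^*\mathcal{O}_{\mathbb{P}^1}(-1)\bigr) \;\cong\; q_*\mathcal{E}_y\otimes \mathcal{O}_{\mathbb{P}^1}(-1) \;\cong\; \bigoplus_{i=1}^{rd}\mathcal{O}_{\mathbb{P}^1}(a_i-1),
\]
and since $q$ is finite the Leray spectral sequence degenerates, so $H^0(q_*\mathcal{E}_y(-1))=H^0(\mathcal{E}_y(-1))=0$. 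This forces $a_i-1<0$, i.e.\ $a_i\leq 0$, for every $i$.

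Finally I would pin down $\sum a_i$ by a Riemann--Roch calculation. Since $q$ is finite, $\chi(q_*\mathcal{E}_y)=\chi(\mathcal{E}_y)$. On the curve $C$ of genus $g$, Riemann--Roch gives
\[
\chi(\mathcal{E}_y) \;=\; \deg(\mathcal{E}_y)+r(1-g) \;=\; r(d+g-1)+r(1-g) \;=\; rd,
\]
while on $\mathbb{P}^1$ one has $\chi(q_*\mathcal{E}_y)=\deg(q_*\mathcal{E}_y)+rd=\sum_i a_i + rd$. Equating these yields $\sum_i a_i=0$. Together with $a_i\leq 0$ this forces every $a_i=0$, proving the claim. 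The only non-routine point is the identification $q^*\mathcal{O}_{\mathbb{P}^1}(1)\cong\mathcal{O}_C(1)$, which needs the explicit description of $q$ coming from the inclusion $k[u,v]\hookrightarrow k[u,v,w]/(w^d-f)$; everything else is a bookkeeping exercise with Grothendieck's splitting theorem and Riemann--Roch.
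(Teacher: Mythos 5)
Your argument is correct, and its core is the same as the paper's: split $q_{*}\mathcal{E}_{y}$ as $\bigoplus\mathcal{O}_{\mathbb{P}^{1}}(a_i)$, use $\chi(q_{*}\mathcal{E}_{y})=\chi(\mathcal{E}_{y})=rd$ to get $\sum a_i=0$, and use $H^{0}(\mathcal{E}_{y}(-1))=0$ together with $\mathcal{O}_C(-1)\cong q^{*}\mathcal{O}_{\mathbb{P}^{1}}(-1)$ and the projection formula to get $a_i\leq 0$, forcing all $a_i=0$. The one place where you diverge from the paper is the ground field: the proposition is stated over the residue field $k(y)$, which need not be algebraically closed (the paper only assumes $k$ perfect and infinite, and even for $k$ algebraically closed one may wish to treat non-closed points). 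You apply the Birkhoff--Grothendieck splitting theorem directly over $k(y)$ without comment; this is in fact legitimate, since the splitting theorem holds over an arbitrary field (the usual induction on rank using $H^{1}(\mathbb{P}^{1},\mathcal{O}(n))=0$ for $n\geq -1$ works with no hypothesis on the field), so your route is a valid and slightly shorter shortcut. The paper instead takes a more conservative path: it proves the statement after base change to the algebraic closure $\overline{k}$, then descends the trivialization using cohomology and base change ($(q_{\overline{k}})_{*}$ of the pullback agrees with the pullback of $(q_{k(y)})_{*}\mathcal{E}_{y}$) together with $H^{1}(\mathrm{Gal}(\overline{k}/k(y)),GL_{rd}(\overline{k}))=0$ (Hilbert 90), which trades the arbitrary-field splitting theorem for a Galois descent argument. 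You should at least flag that $k(y)$ may fail to be algebraically closed and state that the splitting theorem is being used over that field; with that remark added, your proof is complete and marginally more economical than the paper's, while the paper's two-step version has the advantage of only invoking the splitting theorem in its most familiar (algebraically closed) form.
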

\begin{proof}
We consider the case of an algebraically closed base field $k$ first. Since $\mathbb{P}^{1}_{k}$ is a nonsingular projective curve, any torsion-free coherent sheaf is a vector bundle. Since $q_{*}$ respects torsion-freeness, $q_{*}(\mathcal{E}_{y})$ is a vector bundle on $\mathbb{P}^{1}_{k}$. It has rank $rd$ since $\mathcal{E}_{y}$ has rank $r$ and the map $q$ has degree $d$. Since any vector bundle on $\mathbb{P}^{1}_{k}$ is a sum of line bundles, we can write $q_{*}(\mathcal{E}_{y}) \cong \bigoplus^{rd}_{i=1} \mathcal{O}_{\mathbb{P}^{1}_{k}}(n_{i})$ for some integers $n_{i}$. We have $\chi(q_{*} \mathcal{E}_{y}))=\chi(\mathcal{E}_{y})$. Then,
\[
 \chi(q_{*}(\mathcal{E}_{y}))=(\mbox{rk}(q_{*}(\mathcal{E}_{y})))(1-g_{\mathbb{P}^{1}_{k}})+\mbox{deg}(q_{*}(\mathcal{E}_{y}))\\
                            = rd+\sum_{i=1}^{rd}n_{i},
\]
\[
 \chi(\mathcal{E}_{y})=(\mbox{rk}(\mathcal{E}_{y}))(1-g)+\mbox{deg}(\mathcal{E}_{y})=r(1-g)+r(d+g-1)=rd.
\]
So we have $\sum_{i=1}^{rd} n_{i}=0$.

We also have, by the projection formula:
\[
 h^{0}(\mathbb{P}^{1},q_{*}(\mathcal{E}_{y}\otimes_{\mathcal{O}_{C}}
 \mathcal{O}_{C}(-1)))=h^{0}(\mathbb{P}^{1},(q_{*}\mathcal{E}_{y})(-1))=h^{0}(C,\mathcal{E}_{y}(-1))=0.
\]
So it follows that $h^{0}(\mathbb{P}^{1},(q_{*}\mathcal{E}_{y})(-1))=0$. This is equal to $h^{0}(\mathbb{P}^{1},\bigoplus_{i=1}^{rd}\mathcal{O}_{\mathbb{P}^{1}}(n_{i}-1))$. It then follows that $n_{i}-1 < 0$ for all $i$ and hence $n_{i}=0$ because the sum of the $n_{i}$ is equal to 0.

Now assume that $k$ is an arbitrary field, and let $\overline{k}$ denote its algebraic closure. Consider the pullback of $\EE_y$ along the canonical morphism $i_C:C \times \overline{k} \to C \times k(y)$. Then we have $(q_{\overline{k}})_* (i_C^*(\EE_y)) \cong \bigoplus_{rd} \OO_{\mathbb{P}^{1}}$. By Proposition 9.3, Chapter 3, \cite{rH77}; we have $(q_{\overline{k}})_* (i_C^*(\EE_y)) \cong (i_{\mathbb{P}^{1}}^*)(q_{k(y)})_*\EE_y$. But we have $Aut((i_{\mathbb{P}^{1}}^*)(q_{k(y)})_*\EE_y)=GL_{rd}(\overline{k})$ and $H^1(Gal_{\overline{k}/k(y)},GL_{rd}(\overline{k}))=0$ by Hilbert 90. So we have $q_{*}(\mathcal{E}_{y}) \cong \bigoplus_{i=1}^{rd} \mathcal{O}_{\mathbb{P}^{1}_{k(y)}}$.
\end{proof}
We can use this result to prove the following result, which is due to Kulkarni. (See Proposition 3.5, \cite{rK03}.)
\begin{lem}\label{pf}
Let $\pi: C \times \Omega \to \Omega$ denote the projection onto the
second factor. Then $\pi_{*}\mathcal{E}$ is a locally free sheaf of
rank $rd$.
\end{lem}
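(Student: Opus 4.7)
The plan is to apply the cohomology and base change theorem to the flat family $\mathcal{E}$ over $\Omega$. By Theorem \ref{groth}, $\mathcal{E}$ is flat over $Q$ and hence over the open subscheme $\Omega \subset Q$, and the projection $\pi: C \times \Omega \to \Omega$ is projective since $C$ is projective over $k$, so the hypotheses needed for Grauert's theorem are in place.

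The first step is to compute the fiberwise cohomology of $\mathcal{E}$ over closed points of $\Omega$. For every closed point $y \in \Omega$, the map $q_{k(y)}: C_{k(y)} \to \mathbb{P}^{1}_{k(y)}$ is a finite morphism of degree $d$, so $R^{i}(q_{k(y)})_{*}\mathcal{E}_{y} = 0$ for $i > 0$ and the Leray spectral sequence collapses, yielding
\[
 H^{i}(C_{k(y)},\mathcal{E}_{y}) \cong H^{i}\bigl(\mathbb{P}^{1}_{k(y)}, (q_{k(y)})_{*}\mathcal{E}_{y}\bigr) \quad \text{for all } i.
\]
Combined with the isomorphism $(q_{k(y)})_{*}\mathcal{E}_{y} \cong \mathcal{O}_{\mathbb{P}^{1}_{k(y)}}^{rd}$ from Proposition \ref{pushforward}, this gives $h^{0}(C_{k(y)},\mathcal{E}_{y}) = rd$ and $h^{1}(C_{k(y)},\mathcal{E}_{y}) = 0$ for every closed point $y$.

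The second step is to invoke Grauert's theorem (e.g.\ Corollary 12.9 of Chapter III of \cite{rH77}): since $y \mapsto h^{1}(\mathcal{E}_{y})$ vanishes identically and $y \mapsto h^{0}(\mathcal{E}_{y})$ is constantly equal to $rd$, the pushforward $\pi_{*}\mathcal{E}$ is a locally free $\mathcal{O}_{\Omega}$-module of rank $rd$, and the natural base change map $(\pi_{*}\mathcal{E})_{y} \otimes k(y) \to H^{0}(C_{k(y)}, \mathcal{E}_{y})$ is an isomorphism. I do not anticipate any serious obstacle here: flatness is built into the Quot scheme construction, and the fiberwise calculation has already been carried out in Proposition \ref{pushforward}. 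The only point requiring attention is the vanishing of higher direct images under $q_{k(y)}$, which is immediate from the finiteness of $q$.
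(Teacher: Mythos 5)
Your proposal follows essentially the same route as the paper: compute the fibers at closed points via Proposition \ref{pushforward} (so $h^{0}(C_{k(y)},\mathcal{E}_{y})=rd$ there), then invoke a ``constant fiber cohomology over a reduced base implies locally free pushforward'' theorem -- you cite Grauert (Hartshorne III.12.9), the paper cites Corollary 2, p.~50 of Mumford's \emph{Abelian Varieties}, which is the same statement. The one genuine gap is the step where you declare that $y\mapsto h^{0}(\mathcal{E}_{y})$ ``is constantly equal to $rd$'': Grauert's corollary requires constancy of this function on \emph{all} points of $\Omega$, and also that $\Omega$ be integral, whereas your computation (like the paper's) only covers closed points, and nothing in your write-up addresses non-closed points or records that $\Omega$ is irreducible and reduced. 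The passage from closed points to all points is where the paper does a little work: one uses upper semicontinuity of $y\mapsto h^{0}(\mathcal{E}_{y})$ (Hartshorne III.12.8) together with the fact that $\Omega$ is a Jacobson scheme (being locally of finite type over a field), so that the closed sets $\{h^{0}\geq rd\}$ and $\{h^{0}\geq rd+1\}$ are determined by their closed points; this forces $h^{0}\equiv rd$ on all of $\Omega$. Without some such argument the hypothesis of Grauert's theorem is simply not verified, since the stability/triviality statements feeding Proposition \ref{pushforward} are only available at closed points.

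As a side remark, your additional observation that $h^{1}(C_{k(y)},\mathcal{E}_{y})=0$ is not needed for Grauert, but it could be used to give a slightly different argument via cohomology and base change (Hartshorne III.12.11): the vanishing of the fibers of $R^{1}\pi_{*}\mathcal{E}$ at closed points, Nakayama, and the Jacobson property give $R^{1}\pi_{*}\mathcal{E}=0$, whence the base change map $\varphi^{0}(y)$ is surjective and $\pi_{*}\mathcal{E}$ is locally free of rank $rd$; that variant does not use reducedness of $\Omega$. Either way, the closed-point-to-all-point step (or the Nakayama/support step replacing it) must be made explicit.
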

\begin{proof}
For any point $y$ in $\Omega$, let $k(y)$ be the residue field of
$y$. Denote by $\mathcal{E}_{y}$ the vector bundle $(id \times
i_{y})^{*} \mathcal{E}$ on $C_{k(y)}$, where $i_{y}$ is the
inclusion of the point $y$; that is, $Spec \
k(y) \to \Omega$. We have to prove that
$\mbox{dim}_{k(y)}H^{0}(C_{k(y)},\mathcal{E}_{y})$ is constant, and
equal to $rd$. Recall that $\Omega$ is irreducible, and reduced. So, by Corollary 2
pg. 50, \cite{dM70}, it will follow that $\pi_{*}\mathcal{E}$ is a
locally free sheaf of rank $rd$.

If $y$ is closed, then $q_{*}\mathcal{E}_{y} \cong
\bigoplus_{i=1}^{rd} \mathcal{O}_{\mathbb{P}^{1}}$ by Proposition \ref{pushforward}. So
$h^{0}(C,\mathcal{E}_{y})=h^{0}(\mathbb{P}^{1},q_{*}\mathcal{E}_{y})=rd$.
It is also well-known that the function $y \mapsto \mbox{dim}_{k(y)}
H^{0}(C_{k(y)},\mathcal{E}_{y})$ is upper semicontinuous. (See, for
example, Theorem 12.8 in Chapter 3, \cite{rH77}.) Together with the
fact that $\Omega$ is a Jacobson scheme (since it is locally of finite type over the spectrum of a field), it follows that
$\mbox{dim}_{k(y)} H^{0}(C_{k(y)},\mathcal{E}_{y})$ is constant and
equal to $rd$ for all $y \in \Omega$. This implies that
$\pi_{*}\mathcal{E}$ is a vector bundle of rank $rd$.
\end{proof}

\begin{lem}\label{com}
Let $S$ be a scheme, and let $f: S \to \Omega$
be a morphism. Consider the commutative diagram:
\[
\begin{CD}
 C \times S @>\mbox{id}_{C} \times f>> C \times \Omega \\
 @Vq_{S}VV @VVq_{\Omega}V \\
 \mathbb{P}^{1} \times S @>\mbox{id}_{\mathbb{P}^{1}} \times f>> \mathbb{P}^{1} \times \Omega.
\end{CD}
\]
Then the coherent sheaves $(id_{\mathbb{P}^{1}} \times f)^{*}(q_{\Omega})_{*}\mathcal{E}$ and
$(q_{S})_{*}(id_{C} \times f)^{*}\mathcal{E}$ are isomorphic.
\end{lem}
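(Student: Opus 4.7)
The plan is to identify the given diagram as a Cartesian square and then invoke base change for pushforward along an affine morphism. First, I would verify that the square is a fiber product: by the universal property, a $T$-valued point of $(C \times \Omega) \times_{\mathbb{P}^1 \times \Omega} (\mathbb{P}^1 \times S)$ consists of morphisms $T \to C \times \Omega$ and $T \to \mathbb{P}^1 \times S$ whose projections to $\mathbb{P}^1 \times \Omega$ agree. Written out in components, this agreement forces the $\Omega$-component of the first to factor through $f$ and the $\mathbb{P}^1$-component of the second to factor through $q$, so the remaining data is precisely a $T$-valued point of $C \times S$. Hence the square is Cartesian, with the horizontal arrows being the natural base changes along $f$.

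Second, I would observe that $q : C \to \mathbb{P}^1$ is a finite morphism, being a degree $d$ map between projective curves with finite fibers. Finiteness is stable under base change, so $q_\Omega = q \times \mathrm{id}_\Omega$ is also finite, and in particular affine. The key input is then the standard fact that pushforward of quasi-coherent sheaves along an affine morphism commutes with arbitrary base change, with no flatness assumption on the transverse map. Applied to our Cartesian square, this immediately produces the desired natural isomorphism
\[
(\mathrm{id}_{\mathbb{P}^1} \times f)^*(q_\Omega)_* \EE \;\cong\; (q_S)_* (\mathrm{id}_C \times f)^* \EE.
\]
Concretely, working affine-locally on $\mathbb{P}^1 \times \Omega$, the statement reduces to the triviality that for ring maps $R \to B$ and $R \to R'$ and a $B$-module $M$, restriction of scalars commutes with $- \otimes_R R'$, both sides being canonically $M \otimes_R R'$ as $R'$-modules.

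The only subtlety worth flagging is that one cannot directly appeal to flat base change (Hartshorne III.9.3, as used in the proof of Proposition~\ref{pushforward}), since $f : S \to \Omega$ is not assumed flat. The substitute is the affineness of $q_\Omega$, which provides unconditional base change for the zeroth direct image. I do not foresee a serious obstacle here: the whole statement is a routine base change application made available by the finiteness of $q$.
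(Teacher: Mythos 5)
Your proposal is correct and gives a clean conceptual proof of the lemma. You correctly identify the crucial points: the square is Cartesian, $q$ (hence $q_{\Omega}$) is finite and therefore affine, and pushforward along an affine morphism commutes with arbitrary base change, so that no flatness hypothesis on $f$ is needed. You are also right to flag that Hartshorne III.9.3 (flat base change) is not directly applicable here, since $f$ is arbitrary.

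The paper takes a more hands-on route: it writes $\mathcal{E}\cong\widetilde{M}$ for a sheaf of graded $\mathcal{O}_{\Omega}[u,v,w]/(w^d-f)$-modules $M$ (using the relative $\mathrm{Proj}$ description of $C\times\Omega$), then expresses both $(id_{\mathbb{P}^{1}}\times f)^{*}(q_{\Omega})_{*}\mathcal{E}$ and $(q_{S})_{*}(id_{C}\times f)^{*}\mathcal{E}$ as the sheaves associated to explicit tensor products of graded modules, and checks directly that these two graded modules agree. This is, in essence, the affine-local verification you describe in your last paragraph (restriction of scalars commutes with $-\otimes_R R'$), carried out explicitly in the $\mathrm{Proj}$ framework rather than invoked as a general base-change theorem. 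The two arguments are equivalent in substance; yours is more structured and cites the general principle, while the paper's is a self-contained module computation that keeps track of the graded structure (which it subsequently reuses in Theorem~\ref{psi}). Either form is acceptable; your version has the advantage of transparency as to why the result holds and why flatness is irrelevant.
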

\begin{proof}
Since $\mathcal{E}$ is a coherent sheaf on $C \times \Omega=Proj(\mathcal{O}_{\Omega}[u,v,w]/(w^d-f))$, there exists a sheaf of graded $\mathcal{O}_{\Omega}[u,v,w]/(w^d-f)$-modules $M$ such that $\mathcal{E}$ is isomorphic to $\widetilde{M}$. (See \cite{aG65}, II, Sections 3.2 and 3.3.) Then we have the following isomorphisms, as sheaves of $\mathcal{O}_{\Omega}[u,v]$-modules:
\[
 (id_{\mathbb{P}^{1}} \times f)^{*}(q_{\Omega})_{*}\mathcal{E} \cong (\mathcal{O}_{S}[u,v] \otimes_{\mathcal{O}_{\Omega}[u,v]}
 {M}_{\mathcal{O}_{\Omega}[u,v]})^{\sim},
\]
and
\[
 (q_{k(y)})_{*}(id_{C} \times f)^{*}\mathcal{E} \cong {(\mathcal{O}_{S}[u,v,w]/(w^d-f(u,v)) \otimes_{\mathcal{O}_{\Omega}[u,v,w]/(w^d-f(u,v))} M)_{\mathcal{O}_{S}[u,v]}^{\sim}}.
\]
But the two graded $k(y)[u,v]$-modules on the right side of the equations above are isomorphic. So the lemma is proved.
\end{proof}

For the remainder of this section, we construct a map $\psi:C_{f} \to
H^{0}(\mathcal{A})$ that is a universal representation for $C_{f}$.
The next theorem is crucial in this construction. It is
proven in \cite{rH77} that for any morphism $g: X \to Y$ of schemes
and a sheaf $\mathcal{G}$ of $\mathcal{O}_{X}$-modules, there is a
natural morphism $g^{*}g_{*}\mathcal{G} \to \mathcal{G}$. This is a
consequence of the adjointness of the functors $g^{*}$ and $g_{*}$.
The proof of the following theorem follows the discussion in
\cite{rK03} closely. (See, Proposition 3.8, \cite{rK03}.)
\begin{thm}\label{usls}
Let $\mathcal{F}=(q_{\Omega})_{*}\mathcal{E}$. Then the natural morphism $u: p_{\Omega}^{*}
(p_{\Omega})_{*} \mathcal{F} \to \mathcal{F}$ is an isomorphism. (Recall that $p_{\Omega}$ is the canonical
projection $\mathbb{P}^{1} \times \Omega \to \Omega$.)
\end{thm}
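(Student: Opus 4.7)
My plan is to show that $u$ is a surjection between two locally free sheaves of rank $rd$ on $\mathbb{P}^{1}\times\Omega$, and hence an isomorphism. The argument breaks into a rank count, a fiberwise verification, and a Nakayama-based globalization.

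For the rank count, observe that $q_{\Omega}=q\times\mathrm{id}_{\Omega}$ is finite and flat of degree $d$: $q$ is a finite surjection between smooth curves (hence flat by miracle flatness), and flatness is preserved under base change. Pushforward along a finite flat morphism sends a locally free sheaf of rank $r$ to a locally free sheaf of rank $rd$, so $\mathcal{F}=(q_{\Omega})_{*}\mathcal{E}$ is locally free of rank $rd$. Since $\pi=p_{\Omega}\circ q_{\Omega}$, we have $(p_{\Omega})_{*}\mathcal{F}=\pi_{*}\mathcal{E}$, which is locally free of rank $rd$ by Lemma \ref{pf}; hence $p_{\Omega}^{*}(p_{\Omega})_{*}\mathcal{F}$ is also locally free of rank $rd$.

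Next I check that $u$ restricts to an isomorphism on every fiber $\mathbb{P}^{1}_{k(y)}$ over a closed point $y\in\Omega$. Applying Lemma \ref{com} to the closed immersion $\mathrm{Spec}\,k(y)\hookrightarrow\Omega$ and then Proposition \ref{pushforward} gives $\mathcal{F}|_{\mathbb{P}^{1}_{k(y)}}\cong(q_{k(y)})_{*}\mathcal{E}_{y}\cong\mathcal{O}_{\mathbb{P}^{1}_{k(y)}}^{rd}$. The base change isomorphism $((p_{\Omega})_{*}\mathcal{F})(y)\xrightarrow{\sim}H^{0}(\mathbb{P}^{1}_{k(y)},\mathcal{F}|_{\mathbb{P}^{1}_{k(y)}})$ holds by the constant-$h^{0}$ argument already used in Lemma \ref{pf} (or equivalently Grauert's theorem applied to the projective morphism $p_{\Omega}$). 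Functoriality of the adjunction counit then identifies $u|_{\mathbb{P}^{1}_{k(y)}}$ with the evaluation map $H^{0}(\mathbb{P}^{1}_{k(y)},\mathcal{O}^{rd})\otimes_{k(y)}\mathcal{O}_{\mathbb{P}^{1}_{k(y)}}\to\mathcal{O}^{rd}$, which is visibly an isomorphism.

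To globalize, set $\mathcal{K}=\mathrm{coker}(u)$. For any closed point $x\in\mathbb{P}^{1}\times\Omega$ lying above a closed $y\in\Omega$, the vanishing of $\mathcal{K}|_{\mathbb{P}^{1}_{k(y)}}$ together with Nakayama's lemma (using $\mathfrak{m}_{y}\mathcal{O}_{x}\subset\mathfrak{m}_{x}$) forces $\mathcal{K}_{x}=0$. Since $\mathbb{P}^{1}\times\Omega$ is Jacobson and $\mathrm{Supp}(\mathcal{K})$ is closed, $\mathcal{K}$ vanishes identically. Thus $u$ is a surjection of locally free sheaves of the same rank, hence an isomorphism. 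The only nontrivial step is the identification of $u|_{\mathbb{P}^{1}_{k(y)}}$ with the evaluation map, but this is immediate from Lemma \ref{com}, Proposition \ref{pushforward}, and the constant-$h^{0}$ base change implicit in Lemma \ref{pf}; I do not anticipate a serious obstacle beyond this bookkeeping.
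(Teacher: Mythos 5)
Your proof is correct and follows essentially the same route as the paper's: both reduce to the closed fibers $\mathbb{P}^{1}_{k(y)}$, identify the restriction of the counit with the evaluation map on the trivial bundle $q_{*}(\mathcal{E}_{y})\cong\mathcal{O}_{\mathbb{P}^{1}_{k(y)}}^{rd}$ via Lemma \ref{com}, Proposition \ref{pushforward}, and cohomology-and-base-change with constant $h^{0}$, and then globalize by a Nakayama-type argument at closed points. The only cosmetic differences are that you obtain local freeness of $\mathcal{F}$ from the finiteness and flatness of $q_{\Omega}$ (slightly cleaner than the paper's constant-fiber-dimension count) and phrase the final step via the cokernel and the Jacobson property instead of the stalkwise criteria the paper cites from EGA.
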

\begin{proof}
We know that $(p_{\Omega})_{*}\mathcal{F}=\pi_{*}\mathcal{E}$ is a locally free sheaf of
rank $rd$ by Lemma \ref{pf}. So the sheaf $p_{\Omega}^{*}(p_{\Omega})_{*}\mathcal{F}$
is also a locally free sheaf of rank $rd$ on
$\mathbb{P}^{1}_{\Omega}$.

First we claim that $\mathcal{F}$ is a locally free sheaf of rank
$rd$ on $\mathbb{P}^{1}_{\Omega}$. We prove that $dim_{k(y)}
\mathcal{F} \otimes k(y)$ is $rd$ for any closed point $y$ in
$\mathbb{P}^{1}_{\Omega}$. This will be sufficient by the upper semicontinuity of the
dimension function and by the fact that $\mathbb{P}^{1}_{\Omega}$ is locally
of finite type over $k$.

For any closed point $y$ in $\mathbb{P}^{1} \times \Omega$, consider
the following commutative diagram:

\[
    \begin{CD}
        \mbox{Spec }k(y)            @=              \mbox{Spec }k(y)\\
        @ViVV                                        @VVjV\\
        \mathbb{P}^{1}\times Spec(k(y))       @>\mbox{id}\times i_{y}>>    \mathbb{P}^{1} \times \Omega\\
        @Vp_{k(y)}VV                                 @VVp_{\Omega}V\\
        \mbox{Spec }k(y)            @>i_{y}>>    \Omega
    \end{CD}
\]

We prove that $\mbox{dim}_{k(y)} \mathcal{F}
\otimes_{\mathcal{O}_{\mathbb{P}^{1} \times \Omega}} k(y) =
\mbox{dim}_{k(y)} j^{*}\mathcal{F}$ is $rd$. But
$\mbox{dim}_{k(y)}j^{*}\mathcal{F}=\mbox{dim}_{k(y)}i^{*}(id \times
i_{y})^{*} \mathcal{F}$. From Lemma \ref{com} and Proposition \ref{pushforward}, it follows that
$(id \times i_{y})^{*} \mathcal{F} \cong (id \times i_{y})^{*} (q_{\Omega})_{*}\mathcal{E}
\cong q_{*}(id \times i)^{*}\mathcal{E} \cong q_{*}(\mathcal{E}_{y})$ is a trivial
vector bundle of rank $rd$, so that $\mbox{dim}_{k(y)}i^{*}(id \times i_{y})^{*}
\mathcal{F}$ is $rd$.

Now $u$ is a morphism of vector bundles of rank $rd$. To prove that
it is an isomorphism, it is enough to show that $u_{x}:
(p_{\Omega}^{*} (p_{\Omega})_{*} \mathcal{F})_{x} \to
(\mathcal{F})_{x}$ is bijective for all points $x$ in
$\mathbb{P}^{1} \times \Omega$. But it is sufficient to prove this
only for closed points (\cite{aG65}, I, Corollary 0.5.5.7).

By (\cite{aG65}, I, Corollary 0.5.5.6) it is sufficient to prove that
\[
 u_{y} \otimes id: (p_{\Omega}^{*}(p_{\Omega})_{*}\mathcal{F})_{y} /
 \mathfrak{m}_{y}(p_{\Omega}^{*}(p_{\Omega})_{*}\mathcal{F})_{y} \to
 \mathcal{F}_{y}/\mathfrak{m}_{y}\mathcal{F}_{y}
\]
is surjective for all closed points $y$ in
$\mathbb{P}^{1}_{\Omega}$. But this homomorphism is surjective if
and only if the morphism
\[
 j^{*}u: j^{*}p_{\Omega}^{*}(p_{\Omega})_{*}\mathcal{F} \to
 j^{*}\mathcal{F}
\]
is surjective, which is the same as
\[
 i^{*}(id \times i_{y})^{*}u:i^{*}(id \times
 i_{y})^{*}p_{\Omega}^{*}(p_{\Omega})_{*}\mathcal{F} \to i^{*}(id \times
 i_{y})^{*} \mathcal{F}
\]
being surjective. So it is sufficient to prove that
\[
 (id \times i_{y})^{*}u: (id \times i_{y})^{*}
 p_{\Omega}^{*}(p_{\Omega})_{*}\mathcal{F} \to (id \times i_{y})^{*}
 \mathcal{F}
\]
is an isomorphism. But we have the isomorphism
\[
 (id \times i_{y})^{*}p_{\Omega}^{*}(p_{\Omega})_{*}\mathcal{F}
 \cong p_{k(y)}^{*}i_{y}^{*}(p_{\Omega})_{*}\mathcal{F}.
\]
Note that $i_{y}^{*}(p_{\Omega})_{*}\mathcal{F}$ is a trivial vector
bundle of rank $rd$, and so
\[
 p_{k(y)}^{*}i_{y}^{*}(p_{\Omega})_{*}\mathcal{F} \cong p_{k(y)}^{*}
 (\bigoplus_{rd}\mathcal{O}_{Spec k(y)}) \cong \bigoplus_{rd}
 \mathcal{O}_{\mathbb{P}^{1}_{k(y)}}.
\]
This proves that $(id \times
i_{y})^{*}p_{\Omega}^{*}(p_{\Omega})_{*} \mathcal{F}$ is a trivial
vector bundle. Recall that $(id \times i_y)^{*}\mathcal{F}$ is a trivial vector bundle of rank $rd$. So the morphism $(id \times i_{y})^{*}u$ will be an
isomorphism if it is so on the global sections. But the morphism
\[
 p_{k(y)}^{*}i_{y}^{*}(p_{\Omega})_{*}\mathcal{F} \to (id \times
 i_{y})^{*}\mathcal{F}
\]
on the global sections is an isomorphism if the natural morphism
\[
 (p_{\Omega})_{*}\mathcal{F} \otimes_{\mathcal{O}_{\Omega}} k(y) \to
 H^{0}(\mathbb{P}^{1}_{k(y)},\mathcal{F}_{y})
\]
is an isomorphism. By the earlier part, and Corollary 2, p. 50, \cite{dM70}, this is the
case.
\end{proof}

Recall that we have the morphisms $q_{\Omega}:C \times \Omega=C_{\Omega} \to \mathbb{P}^{1} \times \Omega={\mathbb{P}^{1}}_{\Omega}$ and $p_{\Omega}:\mathbb{P}^{1}_{\Omega} \to \Omega$. Consider $\mathcal{O}_{\Omega}[u,v]$ and $\mathcal{O}_{\Omega}[u,v,w]/(w^d-f)$. These are sheaves of graded $\mathcal{O}_{\Omega}$-algebras generated by degree 1 elements. We can define their homogeneous spectra $Proj(\mathcal{O}_{\Omega}[u,v])=\mathbb{P}^{1}_{\Omega}$ and $Proj(\mathcal{O}_{\Omega}[u,v,w]/(w^d-f))=C \times \Omega$. Recall also that we have the vector bundle $\mathcal{F}$ on $\mathbb{P}^{1}_{\Omega}$. That this is a vector bundle follows from Lemma \ref{pf} and Theorem \ref{usls}.

Let $\mathcal{G}$ be a sheaf of $\mathcal{O}_{\mathbb{P}^{1}_{\Omega}}$-modules. Define
\[
 \Gamma_{*}(\mathcal{G})=\bigoplus_{n \in \mathbb{Z}}(p_{\Omega})_{*}(\mathcal{G}(n)).
\]
In particular, we have
\[
 \Gamma_{*}(\mathcal{O}_{\mathbb{P}^{1}_{\Omega}})=\bigoplus_{n \in \mathbb{Z}}(p_{\Omega})_{*}(\mathcal{O}_{\mathbb{P}^{1}_{\Omega}}(n)).
\]
$\Gamma_{*}(\mathcal{O}_{\mathbb{P}^{1}_{\Omega}})$ is then a sheaf of graded $\mathcal{O}_{\mathbb{P}^{1}_{\Omega}}$-algebras and $\Gamma_{*}(\mathcal{G})$ becomes a sheaf of graded $\Gamma_{*}(\mathcal{O}_{\mathbb{P}^{1}_{\Omega}})$-modules.

\begin{rem}
These are particular cases of constructions carried out in \cite{aG65}, II, 3.3.
\end{rem}

By Proposition 7.11, Chapter 2, \cite{rH77}, we have
\[
 \Gamma_{*}(\mathcal{O}_{\mathbb{P}^{1}_{\Omega}}) \cong \mathcal{O}_{\Omega}[u,v]
\]
as sheaves of graded $\mathcal{O}_{\Omega}$-modules.

In the next proposition, we use Theorem \ref{usls} to describe $\Gamma_{*}(\mathcal{F})$.
\begin{prop}\label{pm}
The $\mathcal{O}_{\Omega}[u,v]$-module $\Gamma_{*}(\mathcal{F})=\bigoplus_{n \in \mathbb{Z}} (p_{\Omega})_{*}(\mathcal{F}(n))$ is isomorphic to $(\pi_{*}\mathcal{E}) \otimes_{\mathcal{O}_{\Omega}} \mathcal{O}_{\Omega}[u,v]$ as a sheaf of graded $\mathcal{O}_{\Omega}[u,v]$-modules.
\end{prop}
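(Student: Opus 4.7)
The plan is to get the stated isomorphism out of Theorem~\ref{usls} by twisting and applying the projection formula degree by degree. Theorem~\ref{usls} says that the natural map $p_{\Omega}^{*}(p_{\Omega})_{*}\mathcal{F} \to \mathcal{F}$ is an isomorphism, and by definition $(p_{\Omega})_{*}\mathcal{F} = \pi_{*}\mathcal{E}$, so we already have $\mathcal{F} \cong p_{\Omega}^{*}(\pi_{*}\mathcal{E})$ on $\mathbb{P}^{1}_{\Omega}$.

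First I would twist this isomorphism by $\mathcal{O}_{\mathbb{P}^{1}_{\Omega}}(n)$ to obtain
\[
 \mathcal{F}(n) \;\cong\; p_{\Omega}^{*}(\pi_{*}\mathcal{E}) \otimes_{\mathcal{O}_{\mathbb{P}^{1}_{\Omega}}} \mathcal{O}_{\mathbb{P}^{1}_{\Omega}}(n).
\]
Next I would push forward by $p_{\Omega}$ and apply the projection formula: since $\pi_{*}\mathcal{E}$ is locally free by Lemma~\ref{pf}, the projection formula gives
\[
 (p_{\Omega})_{*}(\mathcal{F}(n)) \;\cong\; (\pi_{*}\mathcal{E}) \otimes_{\mathcal{O}_{\Omega}} (p_{\Omega})_{*}\mathcal{O}_{\mathbb{P}^{1}_{\Omega}}(n).
\]
Taking the direct sum over $n \in \mathbb{Z}$ and using the identification $\Gamma_{*}(\mathcal{O}_{\mathbb{P}^{1}_{\Omega}}) \cong \mathcal{O}_{\Omega}[u,v]$ recorded just before the statement, I would conclude
\[
 \Gamma_{*}(\mathcal{F}) \;\cong\; (\pi_{*}\mathcal{E}) \otimes_{\mathcal{O}_{\Omega}} \mathcal{O}_{\Omega}[u,v].
\]

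The only thing that needs care beyond routine checking is that this chain of isomorphisms is compatible with the grading and with the $\mathcal{O}_{\Omega}[u,v]$-module structure, not merely with the $\mathcal{O}_{\Omega}$-module structure. The grading is automatic because the projection formula is applied degree by degree. For the module structure, I would observe that multiplication by a section $s \in H^{0}(\mathbb{P}^{1}_{\Omega},\mathcal{O}(m))$ acts on $\Gamma_{*}(\mathcal{F})$ by the composite $\mathcal{F}(n) \to \mathcal{F}(n) \otimes \mathcal{O}(m) \cong \mathcal{F}(n+m)$, and the projection formula isomorphism is natural with respect to tensoring with pullbacks from $\Omega$, so the resulting action on $(\pi_{*}\mathcal{E}) \otimes (p_{\Omega})_{*}\mathcal{O}(n)$ is exactly the multiplication in the second factor. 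Hence the displayed isomorphism is one of graded $\mathcal{O}_{\Omega}[u,v]$-modules.

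The only potential obstacle is verifying the hypotheses for the projection formula, but here $\pi_{*}\mathcal{E}$ is a vector bundle and $p_{\Omega}$ is a projective morphism between Noetherian schemes, so the standard form of the projection formula applies without difficulty.
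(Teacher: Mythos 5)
Your proposal is correct and follows essentially the same route as the paper: invoke Theorem~\ref{usls} to identify $\mathcal{F}$ with $p_{\Omega}^{*}(\pi_{*}\mathcal{E})$, apply the projection formula degree by degree (valid since $\pi_{*}\mathcal{E}$ is locally free by Lemma~\ref{pf}), and sum over $n$ using $\Gamma_{*}(\mathcal{O}_{\mathbb{P}^{1}_{\Omega}}) \cong \mathcal{O}_{\Omega}[u,v]$. Your explicit check that the isomorphism respects the graded $\mathcal{O}_{\Omega}[u,v]$-module structure is a worthwhile refinement of the paper's more compact presentation, but it is the same argument.
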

\begin{proof}
By the theorem, it is enough to compute the graded module associated to the coherent sheaf $p_{\Omega}^{*}(p_{\Omega})_{*}\mathcal{F}$. By projection formula, we have
\[
 (p_{\Omega})_{*}(\mathcal{F}(n))=(p_{\Omega})_{*}(\mathcal{F}) \otimes_{\mathcal{O}_{\Omega}} (p_{\Omega})_{*} (\mathcal{O}_{\mathbb{P}^{1}_{\Omega}}(n))
\]
By definition:
\begin{align*}
 \Gamma_{*}(\mathcal{F}) &= \bigoplus_{n \in \mathbb{Z}} (p_{\Omega})_{*}(\mathcal{F}(n)) \\
 &=\bigoplus_{n \in \mathbb{Z}} ((p_{\Omega})_{*}(\mathcal{F}) \otimes_{\mathcal{O}_{\Omega}} (p_{\Omega})_{*} (\mathcal{O}_{\mathbb{P}^{1}_{\Omega}}(n))) \\
 &\cong (p_{\Omega})_{*}(\mathcal{F}) \otimes_{\mathcal{O}_{\Omega}} (\bigoplus_{n \in \mathbb{Z}}(p_{\Omega})_{*} (\mathcal{O}_{\mathbb{P}^{1}_{\Omega}}(n))) \\
 &= (p_{\Omega})_{*}(\mathcal{F}) \otimes_{\mathcal{O}_{\Omega}} \Gamma_{*}(\mathcal{O}_{\mathbb{P}^{1}_{\Omega}}) \\
 &\cong (\pi_{*}\mathcal{E}) \otimes_{\mathcal{O}_{\Omega}} \mathcal{O}_{\Omega}[u,v]
\end{align*}
as desired.
\end{proof}

Next we consider the question of the existence of the universal representation. This representation will be an algebra homomorphism $\psi: C_{f} \to H^{0}(U,\mathcal{A})$ that satisfies a universal property that will be discussed in the next section. Recall that $\mathcal{A}$ is the Azumaya algebra obtained by taking the quotient of $\mathcal{E}nd(\pi_{*}\mathcal{E})$ by the action of $PGL(N)$.

\begin{thm}\label{psi}
There exists an algebra homomorphism $\psi: C_{f} \to H^{0}(U,\mathcal{A})$.
\end{thm}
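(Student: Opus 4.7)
The plan is to extract the desired homomorphism from the action of the variable $w$ on the universal bundle $\mathcal{E}$. Concretely, $\mathcal{E}$ is a coherent sheaf on $C_{\Omega}=\mathrm{Proj}(\mathcal{O}_{\Omega}[u,v,w]/(w^d-f))$, so it corresponds to a graded $\mathcal{O}_{\Omega}[u,v,w]/(w^d-f)$-module $M$. Multiplication by $w$ is an $\mathcal{O}_{\Omega}[u,v]$-linear graded endomorphism of $M$ of degree $1$, and passing to associated sheaves yields an $\mathcal{O}_{\mathbb{P}^1_\Omega}$-linear morphism $\mathcal{F}\to\mathcal{F}(1)$, where $\mathcal{F}=(q_\Omega)_*\mathcal{E}$. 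Equivalently, taking $\Gamma_*$ on $\mathbb{P}^1_\Omega$ produces a degree-$1$, $\mathcal{O}_{\Omega}[u,v]$-linear endomorphism $\mu_w$ of the graded module $\Gamma_*(\mathcal{F})$.

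Next I would invoke Proposition \ref{pm}, which identifies $\Gamma_*(\mathcal{F})\cong (\pi_*\mathcal{E})\otimes_{\mathcal{O}_{\Omega}}\mathcal{O}_{\Omega}[u,v]$ as a free graded $\mathcal{O}_{\Omega}[u,v]$-module. Since $\mu_w$ is $\mathcal{O}_{\Omega}[u,v]$-linear of degree $1$, it is determined by its restriction to the degree-$0$ summand $\pi_*\mathcal{E}\to (\pi_*\mathcal{E})\otimes\mathcal{O}_{\Omega}[u,v]_1=(\pi_*\mathcal{E})\,u\oplus(\pi_*\mathcal{E})\,v$; this splitting produces two $\mathcal{O}_{\Omega}$-linear endomorphisms
\[
 \alpha_u,\,\alpha_v\;\in\;H^0(\Omega,\mathcal{E}nd(\pi_*\mathcal{E}))
\]
with the property that $\mu_w$ acts as $u\,\alpha_u+v\,\alpha_v$.

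It then remains to verify the Clifford relations and $GL(N)$-equivariance. For the first, note that $\mu_w^d$ equals multiplication by $w^d=f(u,v)$ as graded endomorphisms of $\Gamma_*(\mathcal{F})$, so
\[
 (u\alpha_u+v\alpha_v)^d=f(u,v)\cdot\mathrm{id}
\]
as an identity in $H^0(\Omega,\mathcal{E}nd(\pi_*\mathcal{E}))[u,v]$. Comparing coefficients of the monomials $u^iv^{d-i}$ (equivalently, specialising $u=\alpha,v=\beta\in k$) gives $(\alpha\,\alpha_u+\beta\,\alpha_v)^d=f(\alpha,\beta)\,\mathrm{id}$ for all $\alpha,\beta\in k$, which are exactly the defining relations of $C_f$. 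Hence sending $u\mapsto\alpha_u$, $v\mapsto\alpha_v$ defines a $k$-algebra homomorphism $\widetilde\psi:C_f\to H^0(\Omega,\mathcal{E}nd(\pi_*\mathcal{E}))$.

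Finally, the $GL(N)$-action on $\Omega$ lifts to $\mathcal{E}$ (as $\mathcal{E}$ is the universal quotient of the $GL(N)$-equivariant bundle $p_1^*\mathbb{E}$ on $C\times\Omega$), and therefore to $\mathcal{F}$ and to $\pi_*\mathcal{E}$. The element $w$ is manifestly fixed by $GL(N)$, so $\mu_w$ is $GL(N)$-equivariant, and consequently so are $\alpha_u$ and $\alpha_v$ (as maps $\pi_*\mathcal{E}\to\pi_*\mathcal{E}$). Conjugation of $\mathrm{End}(\pi_*\mathcal{E})$ by the scalar matrices is trivial, so these sections in fact lie in $H^0(\Omega,\mathcal{E}nd(\pi_*\mathcal{E}))^{PGL(N)}=H^0(U,\mathcal{A})$. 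Composing $\widetilde\psi$ with this identification produces the required $\psi:C_f\to H^0(U,\mathcal{A})$. The main technical point to be pinned down carefully is the compatibility between the graded-module description of $\mathcal{E}$, the identification in Proposition \ref{pm}, and the $GL(N)$-equivariance of multiplication by $w$; once these book-keeping steps are made precise, everything else is formal.
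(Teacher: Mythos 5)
Your proposal is correct and follows essentially the same route as the paper: identify $\Gamma_*(\mathcal{F})$ with $(\pi_*\mathcal{E})\otimes_{\mathcal{O}_{\Omega}}\mathcal{O}_{\Omega}[u,v]$ via Proposition \ref{pm}, decompose the degree-one action of $w$ as $u\alpha_u+v\alpha_v$ using $\mathcal{M}_1=u\mathcal{M}_0\oplus v\mathcal{M}_0$, deduce the Clifford relations from $w^d=f(u,v)$, and use the $PGL(N)$-invariance of $w$ to conclude that $\alpha_u,\alpha_v$ descend to global sections of $\mathcal{A}$ on $U$. The only cosmetic difference is that you phrase the construction via the morphism $\mathcal{F}\to\mathcal{F}(1)$ and an explicit specialisation $u=\alpha$, $v=\beta$ to extract the defining relations, which the paper leaves implicit.
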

\begin{proof}
We prove this theorem by showing the existence of elements in $H^{0}(U,\mathcal{A})$ that satisfy the relations of the Clifford algebra.

First we make a simple observation. Let $\pi_{*}\mathcal{E}$ be as above, and let $\mathcal{M}$ be the graded $\mathcal{O}_{\Omega}[u,v]$-module $\pi_{*}\mathcal{E} \otimes_{\mathcal{O}_{\Omega}} \mathcal{O}_{\Omega}[u,v]$. Note that $\mathcal{M}_{0}=\pi_{*}\mathcal{E}$. $\pi_{*}\mathcal{E}$ is an $\mathcal{O}_{\Omega}$-module and the grading is determined by assigning $u$ and $v$ as degree 1 elements. Let $\mathcal{M}_{i}$ be the $i^{th}$ graded piece of $\mathcal{M}$. Then we have
\[
 Hom_{\mathcal{O}_{\Omega}}(\mathcal{M}_{0},\mathcal{M}_{1})=u End_{\mathcal{O}_{\Omega}}(\mathcal{M}_{0})+v End_{\mathcal{O}_{\Omega}}(\mathcal{M}_{0}).
\]
This follows from the fact that $\mathcal{M}_{1}=u\mathcal{M}_{0} \oplus v\mathcal{M}_{0}$.

Consider the vector bundle $\mathcal{E}$ on $C_{\Omega}$. We have
\begin{align*}
 \Gamma_{*}(\mathcal{E}) &= \bigoplus_{n \in \mathbb{Z}} \pi_{*}(\mathcal{E}(n)) \\
 &= \bigoplus_{n \in \mathbb{Z}} (p_{\Omega})_{*}(q_{\Omega})_{*}(\mathcal{E}(n)) \\
 &= \bigoplus_{n \in \mathbb{Z}} (p_{\Omega})_{*}((q_{\Omega})_{*}(\mathcal{E}) \otimes_{\mathcal{O}_{\mathbb{P}^{1}_{\Omega}}} \mathcal{O}_{\mathbb{P}^{1}_{\Omega}}(n)) \\
 &= \bigoplus_{n \in \mathbb{Z}} (p_{\Omega})_{*}(\mathcal{F}(n)) \\
 &= \Gamma_{*}(\mathcal{F})
\end{align*}
Hence, $\Gamma_{*}(\mathcal{F})$, which we proved to be isomorphic to $(\pi_{*}\mathcal{E}) \otimes_{\mathcal{O}_{\Omega}} \mathcal{O}_{\Omega}[u,v]$, can also be viewed as a sheaf of graded modules over $\Gamma_{*}(\mathcal{O}_{C_{\Omega}}) \cong \mathcal{O}_{\Omega}[u,v,w]/(w^d-f)$. Now $w$ is a homogeneous element of degree one in this graded $\mathcal{O}_{\Omega}$-algebra, so we can view $w$ (to be precise, the multiplication map by $w$) as an element of $Hom_{\mathcal{O}_{\Omega}} (\mathcal{M}_{0},\mathcal{M}_{1})$. By the comment above, there exist elements $\alpha_{u}$ and $\alpha_{v}$ in $End_{\mathcal{O}_{\Omega}}(M_{0})$ such that we have the following equality in $Hom_{\mathcal{O}_{\Omega}} (\mathcal{M}_{0},\mathcal{M}_{1})$:
\[
 w=u \alpha_{u}+v \alpha_{v}.
\]
Now we can consider the element $w^{d}$ as an element of $Hom_{\mathcal{O}_{\Omega}} (M_{0},M_{d})$. The relation $w^{d}=f(u,v)$ holds in $Hom_{\mathcal{O}_{\Omega}} (\mathcal{M}_{0},\mathcal{M}_{d})$ as well. This shows that the elements $\alpha_{u}$ and $\alpha_{v}$ in $End_{\mathcal{O}_{\Omega}}(M_{0})$ satisfy the relations of the Clifford algebra $C_{f}$. So we get a homomorphism:
\[
 \chi:C_{f} \to
 End_{\mathcal{O}_{\Omega}}(\mathcal{M}_{0})
\]

Finally, recall that $PGL(N)$ acts on $End_{\mathcal{O}_{\Omega}}(\mathcal{M}_0)$, as was shown in the beginning of the section. Since $w$ is invariant under this action, it follows that $\alpha_{u}$ and $\alpha_{v}$ are also $PGL(N)$-invariant. Hence they give global sections of $\mathcal{A}$ that satisfy the relationships of the Clifford algebra; and that means that we have a map $\psi: C_{f} \to H^0(\mathcal{A})$.
\end{proof}

Now we prove the following proposition, which will be used in the proof of the main theorem.
\begin{prop}\label{important_remark}
Suppose that $k$ is algebraically closed. Let $x \in U$ be a closed point, and $i_x: Spec\ k \to U$ the inclusion. The pullback $i_x^{*} (\psi,\mathcal{A})$ is an $rd$-dimensional representation of $C_f$ that corresponds to the vector bundle $E$ over $C$ defined by the point $x$ under Van den Bergh's correspondence. (See Lemma 2, \cite{mvdB87}.)
\end{prop}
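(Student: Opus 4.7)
The plan is to trace the construction of $(\psi, \mathcal{A})$ through the pullback by $i_x$ and identify the result with the representation produced by Van den Bergh's construction applied to $E$. Throughout, let $y \in \Omega$ be a closed point mapping to $x \in U$, so that $\mathcal{E}_y = E$ (as a quotient of $\mathbb{E}$ up to the $PGL(N)$-action).

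First I would compute $i_x^{*}\mathcal{A}$. By Lemma \ref{pf}, $\pi_{*}\mathcal{E}$ is locally free of rank $rd$ on $\Omega$, and its formation commutes with base change to the closed point $y$ by the standard cohomology-and-base-change argument (using that $h^{0}(C, \mathcal{E}_y)$ is the constant $rd$). Therefore $i_y^{*}(\pi_{*}\mathcal{E}) \cong H^{0}(C, E)$, and by Proposition \ref{pushforward} together with $h^{0}(C,E) = h^{0}(\mathbb{P}^{1}, q_{*}E) = rd$, we identify this with $k^{rd}$ canonically once a trivialization $q_{*}E \cong \mathcal{O}_{\mathbb{P}^{1}}^{rd}$ is chosen. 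Since $\mathcal{A}$ is the $PGL(N)$-quotient of $\mathcal{E}nd(\pi_{*}\mathcal{E})$ and $x$ is the image of $y$, the fiber $i_x^{*}\mathcal{A}$ is $\mathrm{End}_k(H^{0}(C,E)) \cong M_{rd}(k)$.

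Next I would unpack what $i_x^{*}\psi$ does on the generators $u, v \in C_f$. By the construction in Theorem \ref{psi}, the matrices $\alpha_u, \alpha_v \in \mathrm{End}_{\mathcal{O}_\Omega}(\mathcal{M}_0) = \mathrm{End}_{\mathcal{O}_\Omega}(\pi_{*}\mathcal{E})$ are defined by the equation $w = u\alpha_u + v\alpha_v$ in $\mathrm{Hom}_{\mathcal{O}_\Omega}(\mathcal{M}_0, \mathcal{M}_1)$, where $\mathcal{M} = \Gamma_{*}(\mathcal{F}) \cong (\pi_{*}\mathcal{E}) \otimes_{\mathcal{O}_\Omega} \mathcal{O}_\Omega[u,v]$ by Proposition \ref{pm}. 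Pulling back along $i_y$, and using Lemma \ref{com} to commute pullback with $(q_\Omega)_*$, the graded module $\mathcal{M}$ pulls back to $H^{0}(C,E) \otimes_k k[u,v] \cong \Gamma_{*}(q_{*}E)$. The equation $w = u\alpha_u + v\alpha_v$ descends to the analogous equation in $\mathrm{Hom}_k(H^{0}(C,E), H^{0}(C,E(1)))$, so the fiber values $\alpha_u|_x, \alpha_v|_x$ are precisely the matrices describing multiplication by $w$ on the graded module $\Gamma_{*}(E) \cong H^{0}(C,E) \otimes k[u,v]$.

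Finally I would match this with Van den Bergh's construction. His recipe takes the representation $\phi = i_x^{*}\psi$, forms the graded $S$-module $\bigoplus_{rd} k[u,v]$ with $u, v$ acting as scalars and $w$ acting as $u\phi(u) + v\phi(v) = u\alpha_u|_x + v\alpha_v|_x$, and then sheafifies on $C = \mathrm{Proj}\, S$. The computation in the previous paragraph shows this graded $S$-module is exactly $\Gamma_{*}(E)$. Since $E$ is recovered as the sheafification of $\Gamma_{*}(E)$ (Serre's theorem), the vector bundle associated to $i_x^{*}(\psi, \mathcal{A})$ under Van den Bergh's correspondence is $E$, as desired. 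The main obstacle is the bookkeeping in the base-change step: one must verify that cohomology and base change is valid here (granted by the constancy of $h^0$ established in Lemma \ref{pf}) and that the pullback of the graded-module structure on $\Gamma_{*}(\mathcal{F})$ agrees with the graded-module structure on $\Gamma_{*}(q_{*}E)$, which follows from Lemma \ref{com} and Theorem \ref{usls}.
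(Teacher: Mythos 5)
Your proposal is correct and follows essentially the same route as the paper's proof: lift $x$ to a closed point $y\in\Omega$, compute $i_x^{*}\mathcal{A}$ via the factorization through $\mathfrak{q}\circ i_y$ (the paper does this directly from $\mathfrak{q}^{*}\mathcal{A}\cong\mathcal{E}nd(\pi_{*}\mathcal{E})$, while you route it through cohomology and base change, but both rest on the constancy of $h^{0}$ from Lemma \ref{pf}), then identify $i_y^{*}\alpha_u,i_y^{*}\alpha_v$ with the action of $w$ on the graded module $\Gamma_{*}(\mathcal{F})\otimes k(y)\cong\bigoplus_{rd}k(y)[u,v]$ and match this with Van den Bergh's recipe. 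The bookkeeping you flag at the end is precisely what Lemma \ref{com} and Proposition \ref{pm} supply, just as the paper uses them.
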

\begin{proof}
Note that any closed point $x \in U$ can be lifted to a closed point $y \in \Omega$. We have the following diagram:
\[
\begin{CD}
 Spec\ k(y) @= Spec\ k(x) \\
 @Vi_{y}VV     @VVi_{x}V \\
 \Omega @>>\mathfrak{q}> U
\end{CD}
\]
Recall that $\mathfrak{q}:\Omega \to U$ is the good quotient map by the action of $PGL(N)$. Hence, we have
\begin{align*}
 i_{x}^{*}\mathcal{A} &= (\mathfrak{q} \circ i_y)^{*}\mathcal{A} \\
 &\cong i_{y}^{*}\mathfrak{q}^{*}\mathcal{A} \\
 &\cong i_{y}^{*}\mathcal{E}nd(\pi_{*}\mathcal{E}) \\
 &\cong \mathcal{E}nd(i_{y}^{*}\pi_{*}\mathcal{E})=\mathcal{E}nd((\pi_{*}\mathcal{E})_{y})
\end{align*}
Since $\mathcal{E}nd(\pi_{*}\mathcal{E})$ is an endomorphism bundle of dimension $(rd)^2$, $i_x^{*}\mathcal{A}$ also has dimension $(rd)^2$. Note also that $i_x^{*}(\alpha_u)$ and $i_x^{*}(\alpha_v)$ satisfy the relations of the Clifford algebra. This proves that $i_x^{*}\mathcal{A}$ is an $rd$-dimensional representation of $C_f$.

We claim that this is the same representation that corresponds to the point $x$ under Van den Bergh's correspondence. Recall the construction of a representation of $C_{f}$ from a stable vector bundle $E$ over $C$ with rank $r$, degree $r(d+g-1)$, and $H^{0}(E(-1))=0$: Consider the direct image $q_{*}(E)$. This is a trivial vector bundle of rank $rd$ on $\mathbb{P}^{1}$ by Proposition \ref{pushforward}. Its associated graded module over $k[u,v]$ is $\bigoplus_{rd} k[u,v]$, and this is also a graded module over $k[u,v,w]/(w^d-f(u,v))$. The action of $w$ gives two matrices in $M_{rd}(k)$ satisfying the relations of the Clifford algebra and hence a map of algebras $C_f \to M_{rd}(k)$. Conversely, let $\phi$ be a representation of $C_f$. Consider the two matrices $\phi(u)$ and $\phi(v)$. The associated graded module of the trivial vector bundle of rank $rd$ on $\mathbb{P}^1$ over $k[u,v]$ is $\bigoplus_{rd} k[u,v]$. We can define an action of $w$ given by the images of the generators of the Clifford algebra, i.e. $w$ acts as $u \phi(u)+v \phi(v)$ on $\bigoplus_{rd} k[u,v]$. This makes $\bigoplus_{rd} k[u,v]$ into a graded module over $k[u,v,w]/(w^d-f)$. In this way, we get a stable rank $r$ vector bundle $E$ on the curve $C$, such that the degree of $E$ is $r(d+g-1)$ and $H^0(E(-1))=0$. (See Section 1, \cite{mvdB87}.)

Recall that the two sections $\alpha_u$ and $\alpha_v$ are defined by the action of $w$ on the $\mathcal{O}_{\Omega}[u,v,w]/(w^d-f)$-module $\Gamma_{*}(\mathcal{F})=\pi_{*}\mathcal{E} \otimes_{\mathcal{O}_{\Omega}} \mathcal{O}_{\Omega}[u,v]$. Therefore, the two sections $i_y^*(\alpha_u)$ and $i_y^*(\alpha_v)$ are determined by the action of $w$ on the $k(y)[u,v,w]/(w^d-f)$-module $\Gamma_{*}(\mathcal{F})\otimes_{\mathcal{O}_{\Omega}} k(y)$.

Now we have
\begin{align*}
 \Gamma_{*}(\mathcal{F})\otimes_{\mathcal{O}_{\Omega}} k(y) &\cong (\pi_{*}\mathcal{E} \otimes_{\mathcal{O}_{\Omega}} \mathcal{O}_{\Omega}[u,v]) \otimes_{\mathcal{O}_{\Omega}} k(y) \\
 &\cong (\pi_{*}\mathcal{E})_{y} \otimes_{k(y)} k(y)[u,v]
\end{align*}
as graded $k(y)[u,v,w]/(w^d-f)$-modules and $(\pi_{*}\mathcal{E})_{y}$, which is the restriction of $\pi_{*}\mathcal{E}$ to the closed point $y \in \Omega$, is a vector space of dimension $rd$. So we have $\Gamma_{*}(\mathcal{F})\otimes_{\mathcal{O}_{\Omega}} k(y) \cong \bigoplus_{rd}k(y)[u,v]$.

Recall that any vector bundle corresponding to a closed point $y \in \Omega$ is such that its direct image under $q:C \to \mathbb{P}^1$ is trivial of rank $rd$. So the isomorphism class of $\mathcal{E}_y$ is determined by giving an action of $w$ on the $k(y)[u,v]$-module $\bigoplus_{rd}k(y)[u,v]$ such that $w^d=f(u,v)$. Since the two $w$-actions agree, the corresponding vector bundles are isomorphic.
\end{proof}

We finish this section by proving that the $U$-representation $(\psi,\mathcal{A})$ is an irreducible $C_f$-representation as defined in Section \ref{azumaya}.
\begin{prop}\label{irred}
The pair $(\psi, \mathcal{A})$ is an irreducible
$U$-representation of dimension $rd$ of the Clifford algebra $C_{f}$.
\end{prop}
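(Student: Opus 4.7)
The plan is to verify the two requirements in Definition \ref{s-rep}: first, that $\mathcal{A}$ is a sheaf of Azumaya algebras of rank $(rd)^2$ over $U$ (we already have $\psi$ as a $k$-algebra homomorphism from Theorem \ref{psi}); second, that the image of $\psi$ generates $\mathcal{A}$ locally.

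For the Azumaya property, I would invoke Proposition \ref{flat}. The quotient map $\mathfrak{q}: \Omega \to U$ is a principal $PGL(N)$-bundle, hence in particular a faithfully flat morphism, so $\{\Omega \to U\}$ is a flat cover. By the construction of $\mathcal{A}$ as the descent of the $PGL(N)$-invariants of $\mathcal{E}nd(\pi_*\mathcal{E})$, we have $\mathfrak{q}^*\mathcal{A} \cong \mathcal{E}nd(\pi_*\mathcal{E})$. Since $\pi_*\mathcal{E}$ is a rank $rd$ vector bundle on $\Omega$ by Lemma \ref{pf}, its endomorphism sheaf is locally isomorphic to $M_{rd}(\mathcal{O}_\Omega)$ on any trivializing open cover of $\Omega$, which together with $\mathfrak{q}$ refines to a flat cover of $U$ on which $\mathcal{A}$ becomes a matrix algebra of the right rank. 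Proposition \ref{flat} then gives that $\mathcal{A}$ is Azumaya of rank $(rd)^2$.

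For irreducibility I would appeal to Lemma \ref{sheaves_of_algebras}: it suffices to show that the global sections $\psi(c)$, $c \in C_f$, generate the fiber $\mathcal{A}(x)$ as a $k$-algebra for every closed point $x \in U$. Fix such an $x$ and choose any lift $y \in \Omega$ with $\mathfrak{q}(y) = x$. By Proposition \ref{important_remark}, the pullback $i_x^*(\psi,\mathcal{A})$ is precisely the $rd$-dimensional representation $\phi_x: C_f \to \mathrm{End}_k((\pi_*\mathcal{E})_y) \cong M_{rd}(k)$ corresponding under Van den Bergh's equivalence to the stable vector bundle $E_x$ parametrized by $x$. Concretely, the images $i_x^*(\alpha_u)$ and $i_x^*(\alpha_v)$ are the two matrices $\phi_x(u), \phi_x(v)$ that realize this representation. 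Since $E_x$ is stable, Van den Bergh's correspondence (Theorem 2 of \cite{mvdB87}) identifies $\phi_x$ as an \emph{irreducible} representation in the classical sense, so by Burnside's theorem (or the Jacobson density theorem) the subalgebra of $M_{rd}(k)$ generated by $\phi_x(u)$ and $\phi_x(v)$ is all of $M_{rd}(k) = \mathcal{A}(x)$. Thus $\psi(C_f)$ generates the fiber of $\mathcal{A}$ at every closed point, and Lemma \ref{sheaves_of_algebras} finishes the argument.

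The step most likely to require care is verifying the identification in the second paragraph, namely that the global sections $\alpha_u,\alpha_v \in H^0(U,\mathcal{A})$ constructed in Theorem \ref{psi} restrict at a closed point $x$ to exactly the two matrices that Van den Bergh assigns to $E_x$. This is precisely the content of Proposition \ref{important_remark}: once the pullback of the graded module $\Gamma_*(\mathcal{F})$ along $i_y: \mathrm{Spec}\,k(y) \to \Omega$ is identified with $\bigoplus_{rd} k(y)[u,v]$ and the action of $w$ is traced through, the two reconstructions of the representation coincide. Everything else is a direct application of previously established results.
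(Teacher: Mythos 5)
Your proposal is correct and follows essentially the same route as the paper: the Azumaya property is checked via Proposition \ref{flat} by pulling back to $\Omega$, where $\mathcal{A}$ becomes $\mathcal{E}nd(\pi_{*}\mathcal{E})$ and hence locally a matrix algebra of rank $(rd)^2$, and irreducibility is deduced from Proposition \ref{important_remark} together with Lemma \ref{sheaves_of_algebras}, with the Burnside/density step (stable bundle $\Rightarrow$ classically irreducible fiber representation $\Rightarrow$ fiberwise generation of $M_{rd}(k)$) made explicit where the paper leaves it implicit. The only deviation is cosmetic: you use $\mathfrak{q}\colon \Omega \to U$ itself as the faithfully flat cover, asserting it is a principal $PGL(N)$-bundle, whereas the paper exhibits the flat cover through the \'etale trivializations $V_i \times PGL(N) \to \Omega$ of Corollary 8.3.6 of \cite{jL97} -- which is exactly the fact you would need to cite to justify flatness of $\mathfrak{q}$.
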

\begin{proof}
First we have to prove that $\mathcal{A}$ is an Azumaya algebra.
Using Corollary 8.3.6 of \cite{jL97}, we can cover $U$ with \'{e}tale maps $\rho_{i}:V_{i} \to U$ and $PGL(N)$-equivariant maps $\tau_{i}: V_{i} \times PGL(N) \to \Omega$ such that the diagrams
\[
    \begin{CD}
        V_{i} \times PGL(N)         @>\tau_{i}>>     \Omega\\
        @Vpr_{1}VV                                   @VVqV\\
        V_{i}                       @>\rho_{i}>>     U
    \end{CD}
\]
are cartesian.

Now it is obvious that the composition $\rho_{i} \circ pr_{1}$ is flat. The pullback of $\mathcal{A}$ along this map is isomorphic to the pullback of $\mathcal{A}$ along $q \circ \tau_{i}$. But we have:
\begin{align*}
  (q \circ \tau_{i})^{*} \mathcal{A} &\cong \tau_{i}^{*} q^{*} \mathcal{E}nd(\pi_{*}\mathcal{E})^{PGL(N)} \\
                                     &\cong \tau_{i}^{*} \mathcal{E}nd(\pi_{*}\mathcal{E}) \\
                                     &\cong \mathcal{E}nd(\tau_{i}^{*} \pi_{*}\mathcal{E})
\end{align*}
It now follows from Proposition \ref{flat} that $\mathcal{A}$ is an Azumaya algebra.
The fact that the dimension is $rd$ follows from the fact that the pullback of $\mathcal{A}$ along the quotient map $\Omega \to U$ is a rank $rd$ vector bundle. Finally, the map $\psi: C_f \to H^0(\mathcal{A})$ was constructed in Theorem \ref{psi} and irreducibility follows from Proposition \ref{important_remark} and Lemma \ref{sheaves_of_algebras}.
\end{proof}

\section{The Moduli Problem}\label{s:moduli}
In this section, we assume $k$ to be algebraically closed.

As stated in the introduction, Procesi proved (see Theorem 1.8, Chapter 4, \cite{cP73}) that the functor $\mathcal{R}ep_{rd}(C_f,-)$ is representable. In this section, we prove the main theorem; which states that $\mathcal{R}ep_{rd}(C_f,-)$ is represented by $U$ and the $U$-representation $(\psi,\mathcal{A})$ as defined in the previous section. Recall that $U$ is defined to be the open subset of the moduli space $\mathcal{M}(r,r(d+g-1))$ consisting of stable vector bundles $E$ over $C$ such that $H^0(E(-1))=0$. These correspond to the irreducible representations of $C_f$. See \cite{mvdB87}. We constructed a sheaf of Azumaya algebras $\mathcal{A}$ on $U$ by considering the direct image $\pi_{*}\mathcal{E}$ of the vector bundle $\mathcal{E}$ on $C \times \Omega$ under the projection map $\pi:C \times \Omega \to \Omega$ and then considering the action of $PGL(N)$ on $\mathcal{E}nd(\pi_{*}\mathcal{E})$. Taking the quotient gives us $\mathcal{A}$. In Theorem \ref{psi}, we constructed an algebra homomorphism $\psi:C_f \to H^0(\mathcal{A})$ and in Proposition \ref{irred} we proved that this makes the pair $(\psi, \mathcal{A})$ into a $C_f$-representation. In this section, we will prove that this representation is the universal representation for the functor $\mathcal{R}ep_{rd}(C_f,-)$.

If $S$ is a $k$-scheme, then by an \emph{$S$-representation of dimension $n$}
of $C_{f}$ we mean a pair $(\psi,\mathcal{O}_{A})$, where
$\mathcal{O}_{A}$ is a sheaf of Azumaya algebras of dimension $n^{2}$
over $S$ and $\psi:C_{f} \to H^{0}(S,\mathcal{O}_{A})$ is a
$k$-algebra homomorphism. Two $S$-representations
$(\psi_{1},\mathcal{O}_{A_{1}})$ and
$(\psi_{2},\mathcal{O}_{A_{2}})$ are called equivalent if there is
an isomorphism $\theta: \mathcal{O}_{A_{1}} \to \mathcal{O}_{A_{2}}$
of sheaves of Azumaya algebras such that $\psi_{2}=H^{0}(S,\theta)
\circ \psi_{1}$.

We call an $S$-representation of $C_{f}$ is called \emph{irreducible} if the image of $C_{f}$ generates $\mathcal{O}_{A}$ locally.

Let $\mathcal{R}ep_{n}(C_{f},-)$ be the functor that assigns to a
$k$-scheme $S$ the set of equivalence classes of irreducible
$S$-representations of degree $n$ of $C_{f}$. Since Azumaya algebras
pull back to Azumaya algebras and irreducible representations are
stable under pull-back, it follows that $\mathcal{R}ep_{n}(C_{f},-)$
is indeed a functor.

It is known that this functor is representable in $Sch/k$. See, for example, Theorem 4.1, \cite{mvdB89}. Our goal in this section is to identify the scheme which represents it. Recall that we have the open subset $U$ of $\mathcal{M}(r,r(d+g-1))$ before and the sheaf of Azumaya algebras $\mathcal{A}=\mathcal{E}nd(\pi_{*}\mathcal{E})^{PGL(N)}$ on it. We will prove that this pair represents the functor $\mathcal{R}ep_{rd}(C_{f},-)$; and here we use the morphism $\psi$ defined and proven to exist in the previous section. (See Theorem \ref{psi} and Proposition \ref{irred}.)

We also consider representations of $C_{f}$ into endomorphism sheaves of vector bundles. Let $\mathcal{G}_{rd}(C_{f},-)$ be the subfunctor of $\mathcal{R}ep_{rd}(C_{f},-)$ that assigns to a $k$-scheme $S$ the set of equivalence classes of irreducible $S$-representations into endomorphism sheaves of vector bundles of rank $rd$. Again, since endomorphism sheaves of vector bundles pull back to sheaves of the same kind, it follows that this is also a functor. It is not a sheaf (with respect to the fppf topology); however, it can be proven that its sheafification $\overline{\mathcal{G}_{rd}}$ is isomorphic to $\mathcal{R}ep_{rd}$. (See Lemma 4.2, \cite{mvdB89}). This fact will be useful to us later when we prove the main theorem.

Let $\eta : C_{f} \to M_{rd}(K)$ be a representation of $C_f$, where $K$ is a field extension of $k$. Denote the images of the two generators of $C_{f}$ under
$\eta$ by $\overline{\alpha_{u}}$ and $\overline{\alpha_{v}}$.
Consider the morphism:

\begin{align*}
 S_{K}=\frac{K[u,v,w]}{(w^{d}-f(u,v))} &\to M_{rd}(K[u,v])\\
 u,v &\mapsto uI_{rd},vI_{rd}\\
 w &\mapsto u\overline{\alpha_{u}}+v\overline{\alpha_{v}}.
\end{align*}

With the natural grading on $N=\bigoplus_{rd}K[u,v]$, the above
morphism is a graded homomorphism. This makes $N$ into a graded
$S_{K}$-module. So $\widetilde{N}$ is a (coherent) sheaf on
$X=\mbox{Proj}\ S_{K}=C_{K}$. Note that $(q_K)_{*}\widetilde{N}\cong \bigoplus_{rd}\mathcal{O}_{\mathbb{P}^{1}}$. (See the Introduction.) But we can prove more:
\begin{lem}\label{const}
(Lemma 4.3, \cite{rK03}.) $\widetilde{N}$ is a rank $r$, degree $r(d+g-1)$ vector bundle on $X$. This bundle is stable, and it has $H^{0}(\widetilde{N}(-1))=0$.
\end{lem}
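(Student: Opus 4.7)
The plan is to verify the four conclusions using the key input that $q_K$ is a finite flat degree-$d$ morphism together with the given isomorphism $(q_K)_*\widetilde{N}\cong\bigoplus_{rd}\mathcal{O}_{\mathbb{P}^1}$. To see that $\widetilde{N}$ is locally free I would show it is torsion-free: any torsion subsheaf of $\widetilde{N}$ would push forward to a torsion subsheaf of the trivial bundle $\bigoplus_{rd}\mathcal{O}_{\mathbb{P}^1}$ and must therefore vanish, since $(q_K)_*$ is faithful on coherent sheaves (as $q_K$ is affine); on a smooth curve torsion-freeness is equivalent to local freeness. The rank is then $rd/\deg(q_K)=r$. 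Since $q_K$ is finite, $\chi(\widetilde{N})=\chi((q_K)_*\widetilde{N})=rd$, and Riemann--Roch gives $\deg(\widetilde{N})=rd-r(1-g)=r(d+g-1)$. For $H^0(\widetilde{N}(-1))=0$, the identification $\mathcal{O}_{C_K}(1)\cong q_K^*\mathcal{O}_{\mathbb{P}^1}(1)$ combined with the projection formula yields $(q_K)_*(\widetilde{N}(-1))\cong\bigoplus_{rd}\mathcal{O}_{\mathbb{P}^1}(-1)$, which has no global sections.

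The hard part is stability, which I expect to require irreducibility of $\eta$ (this is implicit in the context, where $\eta$ arises from an element of $\mathcal{G}_{rd}(C_f,\mathrm{Spec}\,K)$). Given any subbundle $F\subsetneq\widetilde{N}$, the fact that $q_K$ is finite and $\widetilde{N}/F$ is locally free produces a short exact sequence $0\to(q_K)_*F\to\mathcal{O}_{\mathbb{P}^1}^{rd}\to(q_K)_*(\widetilde{N}/F)\to 0$ of vector bundles on $\mathbb{P}^1$. Thus $(q_K)_*F$ is a subbundle of a trivial bundle and has nonpositive degree; Riemann--Roch computes $\deg((q_K)_*F)=\deg(F)-\mathrm{rk}(F)(d+g-1)$, so $\mu(F)\leq d+g-1=\mu(\widetilde{N})$, which already gives semistability.

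To rule out equality, suppose $\mu(F)=d+g-1$. Then $(q_K)_*F$ is a degree-zero subbundle of a trivial bundle on $\mathbb{P}^1$, hence itself trivial, and its space of global sections is a subspace $V\subset K^{rd}$ with $(q_K)_*F\cong V\otimes_K\mathcal{O}_{\mathbb{P}^1}$. Translating back to the graded-module picture of $\widetilde{N}$, this identifies the $K[u,v]$-submodule $V\otimes_K K[u,v]\subset\bigoplus_{rd}K[u,v]=N$ with the image of $F$; for $F$ to be an $\mathcal{O}_{C_K}$-subsheaf of $\widetilde{N}$ rather than merely a $K[u,v]$-submodule, this $V\otimes_K K[u,v]$ must be closed under multiplication by $w=u\overline{\alpha_u}+v\overline{\alpha_v}$, and linear independence of $u,v$ then forces $\overline{\alpha_u}(V)\subset V$ and $\overline{\alpha_v}(V)\subset V$ separately. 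Irreducibility of $\eta$ means the $K$-subalgebra generated by these matrices is all of $M_{rd}(K)$, which admits no proper invariant subspace, contradicting $0\neq F\subsetneq\widetilde{N}$ and yielding stability.
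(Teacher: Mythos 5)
Your proof is correct and covers all four claims, but via a genuinely different and in places cleaner route than the paper's. For local freeness and rank, the paper works fiber by fiber: it computes $\dim_K\ker(a\overline{\alpha_u}+\overline{\alpha_v}-b)$ at closed points, handling $f(a,1)\neq 0$ by counting over the \'{e}tale fiber of $q_K$, and handling $f(a,1)=0$ by a Jordan-form argument that invokes the nondegeneracy of $f$ to rule out a too-large kernel. Your torsion-free argument (a torsion subsheaf of $\widetilde{N}$ pushes forward to a torsion subsheaf of $\bigoplus_{rd}\mathcal{O}_{\mathbb{P}^1}$ and so vanishes, since $(q_K)_*$ is faithful for the affine map $q_K$) gets local freeness in one step and sidesteps the eigenvalue analysis entirely; nondegeneracy of $f$ enters only through the smoothness of $C_K$. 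The degree and $H^0$-vanishing computations coincide with the paper's. For stability, both proofs turn on the same mechanism: an equal-slope subbundle $\mathcal{F}$ forces $(q_K)_*\mathcal{F}$ to be a degree-zero subsheaf of $\mathcal{O}_{\mathbb{P}^1}^{rd}$, hence trivial, contradicting irreducibility. Your version is more explicit in two respects: you establish semistability as a preliminary step from nonpositivity of $\deg((q_K)_*F)$, whereas the paper begins directly with ``suppose $\widetilde{N}$ is strictly semistable''; and you spell out how triviality of $(q_K)_*\mathcal{F}$ yields a $K$-subspace $V$ simultaneously invariant under $\overline{\alpha_u}$ and $\overline{\alpha_v}$ by comparing $u$- and $v$-components of the $w$-action, where the paper only asserts that the representation is reducible. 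You also correctly flag that stability requires $\eta$ to be irreducible, a hypothesis the lemma's statement leaves implicit but which is in force in the surrounding section.
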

\begin{proof}
For the first statement, by Proposition 2.5.1, Vol. 4 Part 2, and Lemma
12.3.1, Vol. 4 Part 3, \cite{aG65}; it is enough to prove the
statement for $\widetilde{N}$ with the assumption that $K$ is
algebraically closed. We will prove that for any closed point $x \in
X$, $\mbox{dim}_{K}(\widetilde{N} \otimes_{\mathcal{O}_{X,x}} K)=r$.
By the usual upper semicontinuity argument, this is sufficient.
Furthermore, it is obvious that $u$ and $v$ cannot be both in a
homogeneous maximal ideal of $S_{K}$. So it is enough to prove the
dimension condition above for any closed point $x$ in
$X_{v}=Spec\ (S_{K})_{(v)}$, because the argument is the same for
$X_{u}$. Now we have:
\[
 (S_{K})_{(v)}=\frac{K[\overline{u},\overline{w}]}{(\overline{w}^{d}-f(\overline{u},1))},
\]
and
\[
 N_{(v)}=\bigoplus_{rd}K[\overline{u}].
\]
Here, $\overline{u}=u/v$ and $\overline{w}=w/v$. $\overline{u}$ acts
in a natural way and $\overline{w}$ acts as
$\overline{u}\overline{\alpha_{u}}+\overline{\alpha_{v}}$. Since $K$ is
algebraically closed, any
closed point in $Spec\ (S_{K})_{(v)}$ can be written as
$\mathfrak{m}=(\overline{u}-a,\overline{w}-b)$ for some $a,b \in K$.
So we have:
\[
 \mathcal{O}_{Spec\ (S_{K})_{(v)},x} \cong
 (\frac{K[\overline{u},\overline{w}]}{(\overline{w}^{d}-f(\overline{u},1))})_{(\overline{u}-a,\overline{w}-b)}
\]
and:
\[
 (N_{(v)})_{x} \cong \mathcal{O}_{x} \otimes_{(S_{K})_{(v)}}
 (\bigoplus_{rd}K[\overline{u}]).
\]
Using these, we get:
\[
 (\widetilde{N})_{x} \otimes_{\mathcal{O}_{X,x}} K \cong
 \frac{\bigoplus_{rd}
 K[\overline{u}]}{(\overline{u}-a,\overline{w}-b)(\bigoplus_{rd}K[\overline{u}])}\\
 \cong
 \frac{\bigoplus_{rd}K}{(a\overline{\alpha_{u}}+\overline{\alpha_{v}}-b)(\bigoplus_{rd}K)}
\]
So the required dimension is $dim_{K}(ker(a\overline{\alpha_{u}}+\overline{\alpha_{v}}-b))$. We have to prove that this dimension is equal to $r$. Let us assume that $f(a,1) \neq 0$ at first. In this case, there are exactly $d$ points $(a,b)$  such that $b^{d}=f(a,1)$. Over each of these points, the rank of the stalk $(\widetilde{N})_{x} \otimes_{\mathcal{O}_{X,x}}K$ is at least $r$ by upper  semicontinuity. Since they must add up to $rd$, each of them must be equal to $r$.

Consider $a\overline{\alpha_{u}}+\overline{\alpha_{v}} \in M_{rd}(K)$. We compute the dimension of its eigenspace of eigenvalue $b$. The characteristic polynomial of $a\overline{\alpha_{u}}+\overline{\alpha_{v}}$ is $t^{rd}-f(a,1)$. This follows from the fact that when $f(a,1) \neq 0$ all the roots are distinct (remember our initial assumption that $char(K)$ does not divide $d$); and if $f(a,1)=0$, then the matrix is nilpotent. Also, if $b \neq 0$, then $f(a,1) \neq 0$ and $b$ is an eigenvalue of multiplicity 1.

Next, let $b=0$. Then we have $f(a,1)=0$. We can find a matrix $B \in GL_{rd}(K)$ such that $B(a\overline{\alpha_{u}}+\overline{\alpha_{v}})B^{-1}$ is in Jordan form. If $dim_{K}(ker(a\overline{\alpha_{u}}+\overline{\alpha_{v}}-b)) > 1$, then we can write, for $w \in M_{rd}(K[u,v])$, $det\ w=det\ BwB^{-1}=(av-u)^{l}det\ w^{'}$, where:
\[
 BwB^{-1}=
\left(
 \begin{array}{ccccccccc}
  av-u &  . & . & . & .  & . \\
  . &  . & . & . & .  & . \\
  . &  . & av-u & . & .  & . \\
  . &  . & . & 1 & .  & . \\
  . &  . & . & . & .  & . \\
  . &  . & . & . & .  & 1
 \end{array}
\right)w^{'}.
\]
There are $l$ diagonal entries $(av-u)$ in the above matrix, and $l
\geq r+1$. But then, we have $det\ w^{d}=(av-u)^{ld}det(w^{'})^{d}=f(u,v)^{d}$. But we immediately see
that $(av-u)$ is a repeated factor of $f(u,v)$ with multiplicity at
least $l \geq 2$, which is a contradiction. So the required
dimension condition is proved.

For the second part, consider the projection $q_{K}:C_{K} \to
\mathbb{P}^{1}_{K}$. Then we have
$\chi(C_{K},\widetilde{N})=\chi(\mathbb{P}^{1}_{K},
(q_{K})_{*}\widetilde{N})$, and by Riemann-Roch:
\[
 r(1-g)+deg(\widetilde{N})=rd(1-0)+deg((q_{K})_{*}\widetilde{N}).
\]
But $(q_{K})_{*}(\widetilde{N}) \cong \bigoplus_{rd}
\mathcal{O}_{\mathbb{P}^{1}_{K}}$ and so its degree is 0. This gives
us $\mbox{deg }\widetilde{N}=r(d+g-1)$.

For the statement that $h^{0}(C_{K},\widetilde{N}(-1))=0$, note that
we can use the projection formula to get:
\[
 h^{0}(C_{K},\widetilde{N}(-1))=h^{0}(\mathbb{P}^{1}_{K},(q_{K})_{*}\widetilde{N}(-1))=0.
\]

Lastly, we have to prove that the vector bundle $\widetilde{N}$ as constructed above is stable.
For this, we follow the discussion in \cite{mvdB87}. As in the introduction, we have a vector bundle
$\widetilde{N}$ on $C$ such that $(q_K)_{*} \widetilde{N} \cong \mathcal{O}_{\mathbb{P}^{1}_{k}}^{rd}$.
We will make use of the following formula:

\begin{align}\label{formula}
 deg((q_K)_{*} \widetilde{N})/rk((q_K)_{*} \widetilde{N}) &=
 \frac{deg(\widetilde{N})-rk(\widetilde{N})(d+g-1)}{d rk(\widetilde{N})} \\
 &= \frac{1}{d}\frac{deg(\widetilde{N})}{rk(\widetilde{N})}+\frac{1-g}{d}-1
\end{align}

Suppose $\widetilde{N}$ is strictly semistable. Then let $\mathcal{F} \subseteq \widetilde{N}$ be a subbundle
such that $deg(\mathcal{F})/rk(\mathcal{F})=deg(\widetilde{N})/rk(\widetilde{N})$.
It follows from a formula similar to \ref{formula} that $deg((q_K)_{*}\mathcal{F})=0$. Recall that since $(q_K)_{*}\mathcal{F}$ is a subsheaf of the torsion-free sheaf $(q_K)_{*}\widetilde{N}\cong \mathcal{O}_{\mathbb{P}^{1}_{k}}^{rd}$, it is torsion-free itself and hence is a vector bundle. See Lemma 5.2.1, \cite{jL97}. By Lemma 4.4.1, \cite{jL97}, it is a sum of line bundles on the projective line. Hence, $(q_K)_{*}\mathcal{F}\cong \bigoplus_{i=1}^{t}\mathcal{O}_{\mathbb{P}^{1}}(n_i)$, where $\sum n_i = 0$. Now note that $H^0((q_K)_{*}\widetilde{N})=0$ and hence $H^0((q_K)_{*}\mathcal{F})=0$ as well. Hence, $n_i \leq 0$ for all $i$, and since $\sum n_i=0$, we have $n_i=0$ for all $i$. So we have $(q_K)_{*}\mathcal{F} \cong \mathcal{O}_{\mathbb{P}^{1}}^t$. It follows that the corresponding representation is reducible, which is contrary to our assumptions. This finishes the proof.
\end{proof}
Next, we prove a relative version of Lemma \ref{const}. Let $S$ be a $k$-scheme, and let $(\psi,\mathcal{O})$ be an element of $\mathcal{G}_{rd}(C_{f},S)$ so that $\mathcal{O}=\mathcal{E}nd_{\mathcal{O}_{S}}(\mathcal{E}^{S})$ for some vector bundle $\mathcal{E}^{S}$ of rank $rd$ on $S$. We will first construct a rank $r$ vector bundle on $C \times_{k} S$. So consider the graded sheaf homomorphism
\begin{align*}
 \frac{\mathcal{O}_{S}[u,v,w]}{w^{d}-f(u,v)} &\to
 \mathcal{E}nd(\mathcal{E}^{S})[u,v] \\
 u,v &\mapsto u,v \\
 w &\mapsto u\psi(x)+v\psi(y)
\end{align*}

where $x$ and $y$ are the standard generators of $C_{f}$. This is a
graded homomorphism of sheaves of $\mathcal{O}_{S}[u,v]$-algebras because the degree of
$u$ and $v$ is $1$ on the right side. We can view the right hand
side of the above morphism as
$\mathcal{E}nd_{\mathcal{O}_{S}}(\mathcal{E}^{S} \otimes_{\mathcal{O}_{S}}\mathcal{O}_{S}[u,v])$.
So it allows us to view
$\mathcal{E}^{S} \otimes_{\mathcal{O}_{S}}\mathcal{O}_{S}[u,v]$ as a
sheaf of graded $(\mathcal{O}_{S}[u,v,w])/(w^{d}-f(u,v)))$-modules.
Since $C \times_{k} S \cong \mbox{Proj}
(\mathcal{O}_{S}[u,v,w])/(w^{d}-f(u,v))$, we get a sheaf
$\mathcal{M}$ over $C \times_{k} S$. For the rest of the section,
for any point $s$ in $S$, $q_{s}$ denotes the morphism $C \times_{k}
k(s) \to \mathbb{P}^{1}_{k(s)}$ induced by the inclusion $k(s)[u,v]
\to (k(s)[u,v,w])/(w^{d}-f(u,v))$, $p_{s}$ denotes the projection of
the second factor of $\mathbb{P}^{1}_{k(s)}$ onto $Spec\ k(s)$, and $\pi_{s}$ denotes the composition $p_{s} \circ q_{s}$. We use a similar notation for an arbitrary field $K$ instead of $k(s)$.

\begin{lem}\label{const_M}
The sheaf $\mathcal{M}$ is a rank $r$ vector bundle on $C \times_{k} S$ of fiberwise constant degree of $r(d+g-1)$. Moreover, for any closed point $s \in S$, the rank $r$ vector bundle $\mathcal{M}_{s}=\mathcal{M} \otimes_{\mathcal{O}_{S}} Spec\ k(s)$ on $C_{k(s)}$ is stable, has degree $r(d+g-1)$ and satisfies $h^{0}(C_{k(s)},\mathcal{M}_{s}(-1))=0$.
\end{lem}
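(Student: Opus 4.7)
The plan is to reduce the statement to the fiberwise result established in Lemma \ref{const}, then to globalize using flatness. First I would verify that the construction of $\mathcal{M}$ commutes with base change to a closed point $s \in S$. A choice of basis for the rank $rd$ vector space $\mathcal{E}^S_s$ identifies the induced map $\psi_s : C_f \to \mathcal{E}nd(\mathcal{E}^S_s)$ with a representation $\eta : C_f \to M_{rd}(k(s))$. Restricting the graded $\mathcal{O}_S[u,v,w]/(w^d-f)$-module $\mathcal{E}^S \otimes_{\mathcal{O}_S} \mathcal{O}_S[u,v]$ to $s$ yields precisely the graded $k(s)[u,v,w]/(w^d-f)$-module $\bigoplus_{rd} k(s)[u,v]$ on which $w$ acts as $u\eta(x) + v\eta(y)$, which is exactly the module $N$ appearing in the setup of Lemma \ref{const}. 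Since $\mathrm{Proj}$ of a graded sheaf of algebras commutes with base change of the base ring, the fiber $\mathcal{M}_s$ is identified with the sheaf $\widetilde{N}$ constructed there, and Lemma \ref{const} immediately gives that $\mathcal{M}_s$ is stable of rank $r$, has degree $r(d+g-1)$, and satisfies $H^0(C_{k(s)},\mathcal{M}_s(-1)) = 0$.

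Next I would upgrade this fiberwise local freeness to local freeness of $\mathcal{M}$ itself on $C \times_k S$. The sheaf $\mathcal{M}$ is coherent, arising from a finitely generated sheaf of graded modules over $\mathcal{O}_S[u,v,w]/(w^d-f)$. It is also $S$-flat: $\mathcal{E}^S$ is $\mathcal{O}_S$-flat as a vector bundle, tensoring with the free $\mathcal{O}_S$-module $\mathcal{O}_S[u,v]$ preserves flatness, and passing to the associated sheaf on $\mathrm{Proj}$ preserves $S$-flatness as well. By a standard criterion (a finitely presented $S$-flat coherent sheaf whose fibers over the closed points of $S$ are all locally free of the same rank is itself locally free of that rank), $\mathcal{M}$ is a rank $r$ vector bundle on $C \times_k S$. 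Once this is established, the Euler characteristic $\chi(\mathcal{M}_s)$ is locally constant in $s$, and hence so is $\deg(\mathcal{M}_s) = \chi(\mathcal{M}_s) - r(1-g)$; its value is then identified with $r(d+g-1)$ by the fiber computation above.

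The main obstacle I anticipate is the careful base change argument in the first step: one must unwind the module-to-sheaf correspondence on $\mathrm{Proj}$, very much in the spirit of Lemma \ref{com}, to confirm that $\mathcal{M}_s$ really is the $\widetilde{N}$ of Lemma \ref{const} attached to $\eta$, and that this identification is functorial in the trivialization of $\mathcal{E}^S_s$. Once this identification is in place, the remainder of the lemma reduces to combining the fiberwise input of Lemma \ref{const} with the standard flatness/fiberwise-freeness criterion.
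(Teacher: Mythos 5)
Your proposal is correct and takes essentially the same approach as the paper: the paper also reduces to the fiberwise stability/local-freeness statement of Lemma \ref{const} via compatibility of the $\mathrm{Proj}$ construction with base change, and then globalizes using the flat-with-locally-free-fibers criterion (the paper cites EGA IV, Lemma 12.3.1, which is the same criterion you invoke). The paper restricts to the case $S$ affine with $\mathcal{E}^S$ trivial to make the identification with the module $N$ of Lemma \ref{const} more transparent, but the substance of the argument is identical to yours.
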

\begin{proof}
It is sufficient to prove these assertions when $S$ is affine. So
let $S=Spec\ R$. Let $C_{R}=C \times_{k} S$. Assuming further
(without loss of generality) that $\mathcal{E}$ is trivial on $S$,
we have $H^{0}(S,\mathcal{E}^{S}) \cong \bigoplus_{rd}R$, because
$\mathcal{E}$ is a vector bundle of rank $rd$. In this case, let us
denote the graded $(R[u,v,w]/(w^{d}-f(u,v))$-module
$H^{0}(S,\mathcal{E}) \otimes_{R} R[u,v]$ by $M$ and the
corresponding sheaf $\mathcal{M}$ by $\widetilde{M}$.

Note that $\widetilde{M}$ is flat over $S$ and that $\pi:C \times S \to S$ is a flat
morphism. So by Lemma 12.3.1, Vol. 4 Part 3, \cite{aG65}, it will be
enough to prove that for any $s \in S$, $\widetilde{M}_{s}$ is a
stable, rank $r$ vector bundle on $S$. Let $\widetilde{M}_{s} = (1
\times i_{s})^{*} \widetilde{M}$, where $1 \times i_{s}$ is the
morphism $C_{s} = C \times_{k} k(s) \to C \times_{k} S$. Consider
the representation associated to the point $s$ via $\psi$:
\[
 \psi_{s}:C_{f} \to M_{rd}(k(s)).
\]
Using Lemma \ref{const}, we obtain a
sheaf $\widetilde{N}$ which is isomorphic to $\widetilde{M}_{s}$. So
by Lemma \ref{const} we know that $\widetilde{M}_{s}$ is a rank $r$,
stable vector bundle. Also, along the fibers of the projection onto
the second factor, the degree is $r(d+g-1)$.

The second assertion follows from Lemma \ref{const}.
\end{proof}

Let $S$ be a $k$-scheme and $\mathcal{V}$ be a vector bundle on $S$. Recall that $\mathcal{V}[u,v]^{\sim}$ defines a coherent sheaf on $\mathbb{P}^{1}_{S}$. It can be shown that $p_S^*(\mathcal{V})$ is isomorphic to $\mathcal{V}[u,v]^{\sim}$. (See, for example, Exercise III-47, \cite{EH00}.) We prove the following lemma which will be used in the main theorem.
\begin{lem}\label{lem-important}
We have $\Gamma_* (\mathcal{V}[u,v]^{\sim}) \cong \mathcal{V}[u,v]$.
\end{lem}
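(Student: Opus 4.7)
The plan is to reduce everything to the projection formula plus the standard computation of the pushforwards of twists of the structure sheaf on $\mathbb{P}^1_S$. First I would use the identification $\mathcal{V}[u,v]^\sim \cong p_S^{*}\mathcal{V}$ stated just before the lemma; this converts the problem into computing $\Gamma_{*}(p_S^{*}\mathcal{V})$.

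Next I would apply the projection formula, which is valid since $\mathcal{O}_{\mathbb{P}^1_S}(n)$ is locally free, to obtain
\[
 (p_S)_{*}(p_S^{*}\mathcal{V}\otimes \mathcal{O}_{\mathbb{P}^1_S}(n)) \cong \mathcal{V}\otimes_{\mathcal{O}_S}(p_S)_{*}\mathcal{O}_{\mathbb{P}^1_S}(n).
\]
Summing over $n\in\mathbb{Z}$ and pulling the direct sum past the tensor product (since tensor products commute with direct sums) gives
\[
 \Gamma_{*}(p_S^{*}\mathcal{V}) \cong \mathcal{V}\otimes_{\mathcal{O}_S}\Gamma_{*}(\mathcal{O}_{\mathbb{P}^1_S}).
\]
Finally, the relative version of Proposition~7.11, Chapter~2 of \cite{rH77}, already invoked in the paragraph preceding Proposition~\ref{pm}, gives $\Gamma_{*}(\mathcal{O}_{\mathbb{P}^1_S}) \cong \mathcal{O}_S[u,v]$ as sheaves of graded $\mathcal{O}_S$-algebras, so the right-hand side becomes $\mathcal{V}\otimes_{\mathcal{O}_S}\mathcal{O}_S[u,v] = \mathcal{V}[u,v]$, finishing the proof.

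There is essentially no obstacle here; the only mild subtlety is checking that the isomorphism is compatible with the grading, so I would be a little careful to note that the projection-formula isomorphism is $\mathcal{O}_S$-linear in $\mathcal{V}$ and respects the grading coming from $n$, so that the sum really assembles into a graded $\mathcal{O}_S[u,v]$-module isomorphism rather than merely an isomorphism of graded $\mathcal{O}_S$-modules. Since the grading on both sides is entirely determined by the degree in $u,v$ and the $\mathcal{V}$-factor sits in degree zero, this compatibility is immediate.
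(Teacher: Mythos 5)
Your proof is correct and follows essentially the same route as the paper's: identify $\mathcal{V}[u,v]^{\sim}$ with $p_S^{*}\mathcal{V}$, apply the projection formula degree by degree, commute the direct sum past the tensor product, and invoke $\Gamma_{*}(\mathcal{O}_{\mathbb{P}^{1}_{S}}) \cong \mathcal{O}_{S}[u,v]$. The only addition is your explicit remark about grading compatibility, which the paper leaves implicit.
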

\begin{proof}
Using the definition of $\Gamma_* (\mathcal{V}[u,v]^{\sim})$, we get
{\allowdisplaybreaks
\begin{align*}
 \Gamma_*(\mathcal{V}[u,v]^{\sim}) &= \bigoplus_{n \in \mathbb{Z}} (p_S)_* ((\mathcal{V}[u,v]^{\sim})(n)) \\
 &\cong \bigoplus_{n \in \mathbb{Z}} (p_S)_* (p_S^* (\mathcal{V})(n)) \\
 &\cong \bigoplus_{n \in \mathbb{Z}} \mathcal{V} \otimes (p_S)_*(\mathcal{O}_{\mathbb{P}^{1}_{S}}(n)) \\
 &\cong \mathcal{V} \otimes \Gamma_*(\mathcal{O}_{\mathbb{P}^{1}_{S}}) \\
 &\cong \mathcal{V} \otimes \mathcal{O}_{S}[u,v] \\
 &\cong \mathcal{V}[u,v].
\end{align*}}
\end{proof}
We are now in a position to prove the main theorem of this section:
\begin{thm}\label{Main_Theorem}
\textbf{(Main Theorem)} Any given degree $rd$ irreducible $S$-representation $(\psi,\mathcal{O}_{A})$ can be obtained as the pull-back of the representation $(\psi,\mathcal{E}nd(\pi_{*}\mathcal{E})^{PGL(N)})$ by a unique map $f:S \to U$. In particular, $(\psi,\mathcal{A})$ represents the functor $\mathcal{R}ep_{rd}(C_f,-)$.
\end{thm}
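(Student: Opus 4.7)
The plan is to construct, for every irreducible $S$-representation $(\phi,\mathcal{O}_A)$ of dimension $rd$, a morphism $f\colon S \to U$ such that $f^*(\psi,\mathcal{A}) \cong (\phi,\mathcal{O}_A)$, and then to establish uniqueness of such $f$. Since $\operatorname{Hom}(-,U)$ is an fppf sheaf and $\mathcal{R}ep_{rd}(C_f,-) \cong \overline{\mathcal{G}}_{rd}(C_f,-)$ by Lemma 4.2 of \cite{mvdB89}, I would only construct the assignment on the subfunctor $\mathcal{G}_{rd}(C_f,-)$ of representations of the form $(\phi,\mathcal{E}nd(\mathcal{E}^S))$ with $\mathcal{E}^S$ a rank $rd$ vector bundle on $S$; sheafification then extends it uniquely to $\mathcal{R}ep_{rd}(C_f,-)$.

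Given such a representation, I would invoke Lemma \ref{const_M} to produce a coherent sheaf $\mathcal{M}$ on $C\times S$ that is flat over $S$ and whose fiber over every closed point of $S$ is a stable vector bundle of rank $r$ and degree $r(d+g-1)$ satisfying $H^0(\mathcal{M}_s(-1))=0$. This is a family of bundles classified by the open subset $U \subseteq \mathcal{M}(r,r(d+g-1))$, so the coarse moduli property yields the desired morphism $f\colon S \to U$. Functoriality of the construction in $S$ packages these morphisms into a natural transformation $\mathcal{G}_{rd}(C_f,-) \to \operatorname{Hom}(-,U)$.

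The central technical step is the verification that $f^*(\psi,\mathcal{A}) \cong (\phi,\mathcal{E}nd(\mathcal{E}^S))$. Here I would use the local universal property of the Quot scheme together with Corollary 8.3.6 of \cite{jL97} to cover $S$ by an \'etale cover $\{\rho_i\colon V_i \to S\}$ and lift each $f\circ\rho_i$ to a morphism $g_i\colon V_i \to \Omega$ for which $(\mathrm{id}_C\times g_i)^*\mathcal{E} \cong (\mathrm{id}_C\times\rho_i)^*\mathcal{M}$. Flat base change combined with Lemma \ref{lem-important} would then give $g_i^*(\pi_*\mathcal{E}) \cong \rho_i^*\mathcal{E}^S$, and hence $g_i^*\mathcal{E}nd(\pi_*\mathcal{E}) \cong \rho_i^*\mathcal{E}nd(\mathcal{E}^S)$. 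Descending across the $PGL(N)$-action, which by construction produces $\mathcal{A}=\mathcal{E}nd(\pi_*\mathcal{E})^{PGL(N)}$, then yields $f^*\mathcal{A} \cong \mathcal{E}nd(\mathcal{E}^S)$. Tracking the action of $w$ on the graded module $\pi_*\mathcal{E}\otimes_{\mathcal{O}_\Omega}\mathcal{O}_\Omega[u,v]$, exactly as in the proof of Theorem \ref{psi}, shows that this isomorphism intertwines $f^*\psi$ with $\phi$. I expect this descent step to be the main obstacle: one has to verify carefully that the \'etale-local identifications are compatible with the gluing data defining $\mathcal{A}$ globally.

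For uniqueness, I would argue as follows. If $f_1,f_2\colon S\to U$ both satisfy $f_i^*(\psi,\mathcal{A}) \cong (\phi,\mathcal{O}_A)$, then at each closed point $s\in S$ Proposition \ref{important_remark} together with Van den Bergh's correspondence shows that $f_1(s)$ and $f_2(s)$ correspond to isomorphic stable vector bundles on $C$, and so coincide as closed points of $U$. Since $U$ is separated, the preimage of the diagonal under $(f_1,f_2)\colon S\to U\times U$ is a closed subscheme of $S$ containing every closed point; because $S$ is reduced and $k$ is algebraically closed, this subscheme must equal $S$, so $f_1=f_2$. (Specialized to $S=U$ and the universal representation, this argument reduces to Lemma \ref{easy-lemma}.) Combined with the existence constructed above, this proves that $(\psi,\mathcal{A})$ represents $\mathcal{R}ep_{rd}(C_f,-)$.
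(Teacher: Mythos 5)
Your proposal reuses the paper's main ingredients (Lemma \ref{const_M} plus the coarse moduli property to get $f\colon S\to U$, local lifts to $\Omega$, the $\Gamma_*$ computation and the comparison of $w$-actions, Proposition \ref{important_remark} at closed points), but it is structured differently: you try to verify the universal property of $(U,(\psi,\mathcal{A}))$ directly, whereas the paper invokes the known representability of $\mathcal{R}ep_{rd}(C_f,-)$ (Theorem 4.1 of \cite{mvdB89}) to get a representing scheme $T$ with universal object $(\Psi,\mathcal{B})$, constructs $\alpha\colon U\to T$ and $\beta\colon T\to U$, and proves $\alpha\circ\beta=\mathrm{id}_T$ by a purely sheaf-theoretic argument in $Hom(-,T)$ and $\beta\circ\alpha=\mathrm{id}_U$ by the closed-point argument (Lemma \ref{easy-lemma}). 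Your shortcut leaves two genuine gaps. First, the step you yourself call ``the main obstacle'' is never carried out: after obtaining local equivalences $g_i^*(\chi,\mathcal{E}nd(\pi_*\mathcal{E}))\cong(\phi,\mathcal{E}nd(\mathcal{E}^{S_i}))$ you must glue them to a single equivalence $f^*(\psi,\mathcal{A})\cong(\phi,\mathcal{O}_A)$ over $S$ (and again when you descend from $\mathcal{G}_{rd}$ to $\mathcal{R}ep_{rd}$ fppf-locally); a priori the local isomorphisms disagree on overlaps by automorphisms of the Azumaya algebra. This can be repaired by rigidity from irreducibility (an automorphism fixing the images of the two generators, which generate locally, is the identity --- exactly the argument the paper uses in the Galois descent section), but you do not supply it, and the paper's detour through $T$ and the sheaf $Hom(-,T)$ is precisely what makes this gluing unnecessary there. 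Relatedly, the identification $(\mathrm{id}_C\times g_i)^*\mathcal{E}\cong(\mathrm{id}_C\times\rho_i)^*\mathcal{M}$ comes from the local universal property of the Quot scheme (the lift is built from the family itself); a lift obtained merely as an \'etale section of $\mathfrak{q}$ via Corollary 8.3.6 of \cite{jL97} pins the family down only up to a twist by a line bundle from the base, so as written this isomorphism needs justification.

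Second, your uniqueness argument is insufficient: you assume $S$ is reduced, but the functor must be tested against all locally Noetherian $k$-schemes locally of finite type, including non-reduced ones, and two morphisms $S\to U$ that agree on closed points need not coincide when $S$ is non-reduced (already for $S=\operatorname{Spec} k[\epsilon]/(\epsilon^2)$ the closed-point plus separatedness argument says nothing). Without uniqueness of the classifying map for such $S$ you have not proved that $(\psi,\mathcal{A})$ represents $\mathcal{R}ep_{rd}(C_f,-)$. The paper only needs the ``identity on closed points implies identity'' argument for the composite $\beta\circ\alpha$ on $U$ itself, which is a reduced quasi-projective variety (Lemma \ref{easy-lemma} applies), while uniqueness over an arbitrary base comes for free once the functor is known to be representable by $T$ and $\alpha,\beta$ are shown to be mutually inverse. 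To salvage your direct approach you would need an argument recovering $f$ scheme-theoretically from $f^*(\psi,\mathcal{A})$ (for instance by showing that the family produced by Lemma \ref{const_M} from $f^*(\psi,\mathcal{A})$ is, locally, the pullback of $\mathcal{E}$ along a lift of $f$, and invoking the uniqueness in Theorem \ref{groth}), not just a pointwise comparison.
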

\begin{proof}
It is already known (see Theorem 4.1, \cite{mvdB89}) that the functor $\mathcal{R}ep_{rd}(C_{f},-)$ is representable. Let $T$ denote the scheme that represents it and let $(\Psi,\mathcal{B})$ be the universal representation. Since we have a $U$-representation $(\psi,\mathcal{A})$, we obtain a unique map $\alpha: U \to T$ such that $(\psi,\mathcal{A}) \cong \alpha^{*} (\Psi,\mathcal{B})$. We will construct another map $\beta: T \to U$ and prove that $\alpha$ and $\beta$ are inverses of each other.

Let $(\phi, \mathcal{E}nd(\mathcal{E}^{S}))$ be a degree $rd$ $S$-representation in $\mathcal{G}_{rd}(C_{f},-)$. Using Lemma \ref{const_M}, we can construct a vector bundle $\mathcal{M}$ on $C \times S$ such that for any point $s \in S$, $\mathcal{M}_s$ is stable, has rank $r$ and degree $r(d+g-1)$ and $H^0(\mathcal{M}_s(-1))=0$. Recall that $\mathcal{M}(r,r(d+g-1))$ is a coarse moduli space of vector bundles and $U \subseteq \mathcal{M}(r,r(d+g-1))$. So using the coarse moduli property, we obtain a map $f: S \to \mathcal{M}(r,r(d+g-1))$ whose image lies in $U$.

Since it will be necessary to use the vector bundle $\mathcal{E}$ on $C \times \Omega$ later in this proof, we use the local lifts of $f$ to $\Omega$. Using the local universal property of $\Omega$, we see that $S$ can be covered by Zariski open sets $S_{i}$ such that the restriction $f_{i}$ of $f$ to $S_{i}$ can be lifted (not uniquely) to $\Omega$. In other words, we get maps $g_{i}: S_{i} \to \Omega$ with $f_{i}=\mathfrak{q} \circ g_{i}$. (Recall that $\mathfrak{q}:\Omega \to U$ is the good quotient map.) These maps satisfy $(id \times g_{i})^{*} \mathcal{E} \cong \mathcal{M}_{i}$ on $S_{i}$, where $\mathcal{M}_{i}$ is the restriction of $\mathcal{M}$ to $C \times S_i$:
\[
\begin{CD}
S_i		 @= 	S_i \\
@Vg_{i}VV      	@VVf_{i}V \\
\Omega   @>\mathfrak{q}>>	U
\end{CD}
\]
We claim that $g_{i}^{*} (\chi,\mathcal{E}nd(\pi_{*}\mathcal{E}))$ is equivalent to (see Definition \ref{s-rep}) $(\phi,\mathcal{E}nd(\mathcal{E}^{S_i}))$, where $\mathcal{E}^{S_{i}}$ is the restriction of $\mathcal{E}^S$ to $C \times S_i$. To prove this claim, we will make extensive use of the following diagram of maps:
\[
\begin{CD}
C \times S_{i} 	@>id_{C}\times g_{i}>>	C \times \Omega \\
@Vq_{S_{i}}VV														@VVqV \\
\mathbb{P}^{1} \times S_{i}	@>{id_{\mathbb{P}^{1}}\times g_{i}}>> \mathbb{P}^{1} \times \Omega \\
@Vp_{S_{i}}VV													@VVpV \\
S_{i}	@>g_{i}>> \Omega
\end{CD}\
\]

On top of this diagram, we have the vector bundle $\mathcal{M}_{i}$ constructed over $C \times S_i$ as in Lemma \ref{const_M} and the bundle $\mathcal{E}$ on $C \times \Omega$ as in Grothendieck's theorem \ref{groth}. In Lemma \ref{const_M}, $\mathcal{M}_i$ was defined to be $(\mathcal{E}^{S_i} \otimes_{\mathcal{O}_{S_i}} \mathcal{O}_{S_i}[u,v])^{\sim}$, which was viewed as a graded $(\mathcal{O}_{S_i}[u,v,w])/(w^{d}-f(u,v))$-module. We can see from this that $(q_{S_i})_{*}\mathcal{M}_i \cong \mathcal{E}^{S_i}[u,v]^{\sim}$, viewed as a graded $\mathcal{O}_{S_i}[u,v]$-module. Next we compute $\Gamma_{*}(\mathcal{M}_{i})$. Recall that this is a graded $(\mathcal{O}_{S_i}[u,v,w])/(w^{d}-f(u,v))$-module. Then we have
{\allowdisplaybreaks
\begin{align*}
 \Gamma_{*}(\mathcal{M}_{i}) &= \bigoplus_{n \in \mathbb{Z}} (p_{S_i})_* (q_{S_i})_* (\mathcal{M}_{i}(n)) \\
                             &\cong \bigoplus_{n \in \mathbb{Z}} (p_{S_i})_* ((q_{S_i})_*(\mathcal{M}_{i}) \otimes_{\mathcal{O}_{\mathbb{P}^{1}_{S_i}}} \mathcal{O}_{\mathbb{P}^{1}_{S_i}}(n)) \\
                             &\cong \Gamma_{*}((q_{S_i})_*\mathcal{M}_{i}) \\
                             &\cong \Gamma_{*}(\mathcal{E}^{S_i}[u,v]^{\sim}) \\
                             &\cong \mathcal{E}^{S_i}[u,v].
\end{align*}
}
Here, the last line follows from Lemma \ref{lem-important}. Similarly, we compute $\Gamma_{*} ((id_C \times g_i)^* \mathcal{E})$. Recall also that this is a graded $(\mathcal{O}_{S_i}[u,v,w])/(w^{d}-f(u,v))$-module.
{\allowdisplaybreaks
\begin{align*}
 \Gamma_{*} ((id_C \times g_i)^* \mathcal{E}) &= \bigoplus_{n \in \mathbb{Z}} \pi_{*} (id_C \times g_i)^* \mathcal{E}(n) \\
       &\cong \bigoplus_{n \in \mathbb{Z}} (p_{S_i})_* (q_{S_i})_*(((id_C \times g_i)^* \mathcal{E}) \otimes_{\mathcal{O}_{C \times S_i}} \mathcal{O}_{C \times S_i}(n)) \\
       &\cong \bigoplus_{n \in \mathbb{Z}} (p_{S_i})_* ((q_{S_i})_* (id_C \times g_i)^* \mathcal{E} \otimes_{\mathcal{O}_{\mathbb{P}^{1}_{S_i}}} \mathcal{O}_{\mathbb{P}^{1}_{S_i}}(n)) \\
       &\cong \bigoplus_{n \in \mathbb{Z}} (p_{S_i})_* ((id_{\mathbb{P}^{1}_{S_i}}\times g_i)^* (q_{\Omega})_* \mathcal{E}\otimes_{\mathcal{O}_{\mathbb{P}^{1}_{S_i}}} \mathcal{O}_{\mathbb{P}^{1}_{S_i}}(n)) \\
       &\cong \bigoplus_{n \in \mathbb{Z}} (p_{S_i})_* ((id_{\mathbb{P}^{1}_{S_i}}\times g_i)^* (p_{\Omega})^* (\pi_* \mathcal{E})\otimes_{\mathcal{O}_{\mathbb{P}^{1}_{S_i}}} \mathcal{O}_{\mathbb{P}^{1}_{S_i}} (n) )\\
       &\cong \bigoplus_{n \in \mathbb{Z}} (p_{S_i})_* ((p_{S_i})^* g_i^* (\pi_* \mathcal{E})\otimes_{\mathcal{O}_{\mathbb{P}^{1}_{S_i}}} \mathcal{O}_{\mathbb{P}^{1}_{S_i}}(n)) \\
       &\cong g_i^*(\pi_* \mathcal{E}) \otimes \bigoplus_{n \in \mathbb{Z}} (p_{S_i})_* (\mathcal{O}_{\mathbb{P}^{1}_{S_i}}(n)) \\
       &\cong g_i^*(\pi_* \mathcal{E}) \otimes \Gamma_*(\mathcal{O}_{\mathbb{P}^{1}_{S_i}}) \\
       &\cong g_i^*(\pi_* \mathcal{E}) \otimes \mathcal{O}_{S_i}[u,v] \\
       &\cong g_i^*(\pi_* \mathcal{E})[u,v].
\end{align*}
}
Since $\mathcal{M}_i$ and $(id_C \times g_i)^* \mathcal{E}$ are isomorphic as sheaves, it follows that $\Gamma_{*}(\mathcal{M}_{i})$ and $\Gamma_{*} ((id_C \times g_i)^* \mathcal{E})$ are isomorphic as $(\mathcal{O}_{S_i}[u,v,w])/(w^{d}-f(u,v))$-modules.

Recall that the representation $(\phi,\mathcal{E}nd(\mathcal{E}^{S_i}))$ is defined by the action of $w$ on $\mathcal{E}^{S_i}[u,v]$ and the representation $g_i^* (\chi,\mathcal{E}nd(\pi_*\mathcal{E}))$ is similarly defined by the action of $w$ on $g_i^*(\pi_* \mathcal{E})[u,v]$. Now let $\eta: \Gamma_{*}(\mathcal{M}_{i}) \cong \mathcal{E}^{S_i}[u,v] \to \Gamma_{*} ((id_C \times g_i)^* \mathcal{E}) \cong g_i^*\pi_* \mathcal{E}[u,v]$ be an isomorphism of $(\mathcal{O}_{S_i}[u,v,w])/(w^{d}-f(u,v))$-modules. Let $w$ act on $\mathcal{E}^{S_i}$ by $u \phi^{1}_{u} + v \phi^{1}_{v}$ and on $g_i^* \pi_*\mathcal{E}$ by $u \phi^{2}_{u} + v \phi^{2}_{v}$. So we have
\[
 \eta(w) = w \eta
\]
that is,
\[
 \eta((u \phi^{1}_{u} + v \phi^{1}_{v})) = (u \phi^{2}_{u} + v \phi^{2}_{v})(\eta)
\]
Comparing the $u$ and $v$ components, we see that $\eta \phi^{1}_{u} \eta^{-1} = \phi^{2}_{u}$ and $\eta \phi^{1}_{v} \eta^{-1} = \phi^{2}_{v}$. Hence the two representations $(\phi,\mathcal{E}nd(\mathcal{E}^{S_i}))$ and $g_i^*(\chi,\mathcal{E}nd(\pi_*\mathcal{E}))$ are equivalent.

Now we have
\begin{align}\label{eq2}
 (\alpha \circ f_{i})^{*}(\Psi,\mathcal{B}) &\cong f_{i}^{*} (\psi,\mathcal{A})\\
 					   &\cong g_{i}^{*} \mathfrak{q}^{*} (\psi,\mathcal{A}) \\
                       &\cong g_{i}^{*} (\chi,\mathcal{E}nd(\pi_{*}\mathcal{E})) \\
                       &\cong (\phi,\mathcal{E}nd(\mathcal{E}^{S_i})).
\end{align}

Next, define morphisms of functors as follows:
\[
 \mathcal{G}_{rd}(C_{f},S) \to Hom_{Sch/k}(S,U) \to Hom_{Sch/k}(S,T)
\]
by sending $(\phi,\mathcal{E}nd(\mathcal{E}^{S}))$ to the map $f$ defined as above first, and then by sending $f$ to the composition $\alpha \circ f$. Hence the second map is induced by $\alpha: U \to T$. Recall that we are using the flat topology on $(Sch/S)$. Since the sheafification of $\mathcal{G}_{rd}(C_{f},-)$ is $\mathcal{R}ep_{rd}(C_{f},-)$, and $Hom_{Sch/k}(-,T)$ is a sheaf; we get morphisms of sheaves as follows:
\begin{align}\label{eq1}
 \mathcal{R}ep_{rd}(C_{f},-) \cong  Hom_{Sch/k}(-,T) \to Hom_{Sch/k}(-,U) \to Hom_{Sch/k}(-,T).
\end{align}
This sequence induces a sequence of morphisms of schemes $T \to U \to T$; and any $(\phi,\mathcal{E}nd(\mathcal{E}^{S}))$ is mapped to $\alpha \circ f$ by this composition. Recall that the second map is $\alpha$, and we will call the first map $\beta$.

Next, let $S$ be a $k$-scheme and let $(\gamma,\mathcal{G})$ be an $rd$-dimensional irreducible $S$-representation of $C_f$. Since $\mathcal{G}$ is Azumaya, by Proposition \ref{flat} we can cover $S$ by flat maps $S_i \to S$ such that the pullback of $(\gamma,\mathcal{G})$ is of the form $(\phi,\mathcal{E}nd(\mathcal{E}^{S_i}))$ for some vector bundle $\mathcal{E}^{S_i}$ on $S_i$. This gives us a map $f_i:S_i \to U$ as stated earlier in the proof. Then we can cover $S_i$ by Zariski open subsets $S_{i,j}$ over which we can lift the restrictions $f_{i,j}:S_{i,j}  \to U$ to $\Omega$. Following the formula \ref{eq2}, we see that the pullback of the universal representation $(\Psi,\mathcal{B})$ along $\alpha \circ f_{i,j}$ is isomorphic to $(\phi_{i,j},\mathcal{E}nd(\mathcal{E}^{S_{i,j}}))$. This means that given any section of the sheaf $\mathcal{R}ep_{rd}(C_{f},-)$ corresponding to an irreducible $rd$-dimensional representation $(\gamma,\mathcal{G})$ over a $k$-scheme $S$, there is a flat cover of $S$ by maps $S_{i,j} \to S$ such that the morphism of functors defined in the formula \ref{eq1} maps the restriction of $(\gamma,\mathcal{G})$ to $S_{i,j}$ to the unique map $S_{i,j} \to T$ that pulls the universal representation $(\Psi,\mathcal{B})$ back to $(\gamma,\mathcal{G})$. Since $Hom_{Sck/k}(-,T)$ is a sheaf, this proves that the composition in \ref{eq1} is the identity. In other words, $\alpha \circ \beta$ is the identity.

Since $\alpha \circ \beta$ is the identity on $T$, and since $(\psi,\mathcal{A})$ pulls back to $(\Psi,\mathcal{B})$ under $\beta$, it is clear that $(\psi,\mathcal{A})$ must pull back to $(\Psi,\mathcal{B})$ under $\beta$.

We claim that $\beta$ is the inverse of $\alpha$. We only need to prove that the composition $\beta \circ \alpha: U \to T \to U$ is the identity on $U$. It is clear that the $U$-representation $(\psi,\mathcal{A})$ pulls back to itself under $\beta \circ \alpha$.

Denote $\delta=\beta \circ \alpha:U \to U$ for brevity. We claim that $\delta$ maps a closed point $x \in U$ to itself. To see this, consider the composition $\delta \circ i_{x}:Spec\ k(x) \to U \to U$. Then $i_{x}^{*}(\psi,\mathcal{A})$ is the irreducible $rd$-dimensional representation of $C_f$ corresponding to the closed point $x$ by Proposition \ref{important_remark}. Let $y=\delta(x)$, it is clear that the inclusion morphism of $y$ is $\delta \circ i_{x}$. We have
\begin{align*}
 (\delta \circ i_x)^{*}(\psi,\mathcal{A}) &\cong i_x^{*} \delta^{*}(\psi,\mathcal{A}) \\
 &\cong i_x^{*}(\psi,\mathcal{A}).
\end{align*}
Again, by Proposition \ref{important_remark}, $y$ must be the closed point corresponding to the $rd$-dimensional representation $i_x^{*}(\psi,\mathcal{A})$. But this is the point $x$. It is now clear that the map $\delta$ is the identity on closed points. Since $U$ is a variety, it now follows from Lemma \ref{easy-lemma} that $\beta \circ \alpha$ is the identity map on $U$. This finishes the proof.
\end{proof}

\section{Galois descent}
In the last section, we proved Theorem \ref{Main_Theorem} under the assumption that the base field $k$ is algebraically closed. In this section, we assume that the binary form $f(u,v)$ is defined over a perfect, infinite field $k$ whose characteristic does not divide $d$; and we prove the main theorem in this case.

We denote the algebraic closure of $k$ by $k'$. Then $k'/k$ is Galois. Let $G=Gal(k'/k)$ denote the Galois group. Throughout this section, we denote the constructs with base field $k'$ with a superscript $'$.

Let $V'$ be a variety over $k'$. Recall that a \emph{model} for $V'$ is a variety $V$ over $k$ such that $V' \cong V \times_{k} k'$.

\begin{prop}\label{p:gal}
There is a model $U$ for $U'$, where $U'$ is the moduli space as constructed in Section \ref{s:con} over $k'$.
\end{prop}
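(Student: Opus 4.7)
The plan is to construct $U$ directly over $k$ by repeating the construction of Section \ref{s:con} with the base field $k$ rather than $k'$, and then to verify that base change to $k'$ recovers $U'$. The key observation is that the binary form $f(u,v)$, and hence the curve $C : w^d = f(u,v) \subset \mathbb{P}^2_k$, are already defined over $k$, so every ingredient used in Section \ref{s:con} has a natural avatar over $k$ and is compatible with the extension $k \hookrightarrow k'$.

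More precisely, the construction proceeds in four steps. First, I would invoke Grothendieck's theorem (Theorem \ref{groth}) to produce the Quot scheme $Q$ and its universal quotient $\theta : p_{1}^{*}\mathbb{E} \to \mathcal{E}$ as a $k$-variety together with a coherent sheaf on $C \times_{k} Q$. Second, the open subset $R^{s} \subset Q$ parametrizing stable bundles is defined over $k$, since geometric stability is preserved (in both directions) under field extension, so the locus is stable under the natural $\mathrm{Gal}(k'/k)$-action on $Q \times_{k} k'$. Third, the subset $\Omega \subset R^{s}$ cut out by $H^{0}(E(-1))=0$ is open by exactly the semicontinuity argument already used in Section \ref{mb}, applied to $\mathcal{E} \otimes p_{C}^{*}\mathcal{O}_{C}(-m-1)$ on $C \times_{k} R^{s}$; this argument goes through verbatim over $k$. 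Fourth, $PGL(N)$ is a reductive $k$-group acting freely on $\Omega$, so the good quotient $U := \Omega/PGL(N)$ exists as a quasi-projective $k$-variety.

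The main obstacle is to verify the isomorphism $U \times_{k} k' \cong U'$, i.e. that every step of the construction commutes with the base change $k \hookrightarrow k'$. For the Quot scheme, for the openness of $R^{s}$, and for the semicontinuity cutting out $\Omega$, base-change compatibility is standard and follows directly from the universal properties invoked. The substantive point is that the formation of a good GIT quotient by a reductive group commutes with the faithfully flat base change $k \to k'$; I would cite this as the crucial input. Combining these compatibilities yields $U \times_{k} k' \cong U'$, so the $k$-variety $U$ constructed above is a model for $U'$ in the sense of the proposition.
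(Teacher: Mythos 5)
Your proposal is correct and takes essentially the same approach as the paper: construct $\Omega$ and the $PGL(N)$-quotient over $k$, then verify that the quotient commutes with the flat base change $k \hookrightarrow k'$, which the paper does by citing that the geometric quotient is \emph{uniform} in Mumford's sense. The only cosmetic difference is that the paper obtains $\Omega$ as the image of $\Omega'$ under the open projection $Q' \to Q$, whereas you rebuild it directly over $k$ via the stability and semicontinuity arguments; the two descriptions agree precisely because, as you note, those defining conditions are Galois-stable.
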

\begin{proof}
Recall (Th\'{e}or\`{e}me 3.1, \cite{aG61}) that the Quot scheme $Q$ parametrizing quotients of $\mathbb{E}$ as defined in \ref{mb} is defined for an arbitrary base. We have an open subscheme $\Omega'$ in $Q'$. Since the canonical map $Q' \to Q$ is open, the image $\Omega$ of $\Omega'$ is open in $Q$. It is now clear that there is a $PGL(N)$-action on $\Omega$ that induces the $PGL(N)$-action on $\Omega'$ and hence a uniform geometric quotient $U$ of $\Omega$ by $PGL(N)$ by Proposition 1.9, Chapter 1, \cite{dM82}. Since this quotient is uniform, we have $U'=U \times k'$.
\end{proof}

We now describe how to obtain a $U$-representation $(\psi,\mathcal{A})$ that is a model for the universal representation. We consider the trivial bundle $\mathbb{E}$ over $k$ with rank $N$ as described in \ref{mb}, and we consider the Quot scheme $Q$ over $k$ parametrizing the quotients of $\mathbb{E}$ that have rank $r$ and degree $r(d+g-1)$. (Recall that prescribing the rank and degree is equivalent to prescribing the Hilbert polynomial of a vector bundle by the Riemann-Roch theorem.) Consider the open subset $\Omega$ of $Q$ as described in Proposition \ref{p:gal}. There is a universal quotient bundle $\mathcal{E}$ over $\Omega$ as described in Section \ref{mb}. To show that $(\psi',\mathcal{A}')$ descends to $k$, we mimic the construction in Section \ref{s:con}. The proofs of Lemma \ref{pf} and Theorem \ref{usls} carry over. We can then follow the construction of the universal representation as in Theorem \ref{psi} to obtain a sheaf of Azumaya algebras $\mathcal{A}$ and a $k$-algebra homomorphism $\psi:C_f \to H^0(\mathcal{A})$.

We now show that $(\psi,\mathcal{A}) \times k'$ is isomorphic to $(\psi',\mathcal{A}')$. By Proposition 9.3, Chapter 3, \cite{rH77}, we have $\pi'_{*}(\mathcal{E}') \cong \pi_{*}(\mathcal{E}) \times k'$, where $\pi': C \times \Omega' \to \Omega'$ or $\pi:C \times \Omega \to \Omega$ denotes the projection. It is now clear that $\mathcal{E}nd(\pi'_{*}\mathcal{E}')\cong \mathcal{E}nd(\pi_{*}\mathcal{E}) \times k'$. Since $\mathcal{A}'$ is defined to be $\mathcal{E}nd(\pi'_{*}\mathcal{E}')^{PGL(N)}$ and $\mathcal{A}$ is defined to be $\mathcal{E}nd(\pi_{*}\mathcal{E})^{PGL(N)}$; $\mathcal{A}'$ descends to $\mathcal{A}$.

It now remains to show that the two sections $\alpha_u'$ and $\alpha_v'$ of $H^0(\mathcal{A}')$ defined in Theorem \ref{psi} descend to two sections $\alpha_u$ and $\alpha_v$ of $H^0(\mathcal{A})$. To see this, recall that $\alpha_u'$ and $\alpha_v'$ were defined by the action of $w$ (considered as an element of $\mathcal{O}_{\Omega'}[u,v,w]/(w^d-f)$) on $\Gamma_{*}(\mathcal{F}')$. Here, $\mathcal{F}'$ denotes the pushforward $(q_{\Omega'})_{*}(\mathcal{E}')$. Now, since the pushforward commutes with flat base change, we have $(q_{\Omega'})_{*}(\mathcal{E}') \cong (q_{\Omega})_{*}(\mathcal{E})\times k'$. This implies that $\Gamma_{*}(\mathcal{F}')\cong\Gamma_{*}(\mathcal{F})'$. Consider the two sections $\alpha_u$ and $\alpha_v$ of $H^0(\mathcal{A})$ obtained by the action of $w$ on $\Gamma_{*}(\mathcal{F})$. Since $\Gamma_{*}(\mathcal{F}')\cong\Gamma_{*}(\mathcal{F})'$, and since the action of $w$ is compatible with base change, it follows that $\alpha_u'$ and $\alpha_v'$ descend to $\alpha_u$ and $\alpha_v$ respectively. This proves that $(\psi,\mathcal{A}) \times k'$ is isomorphic to $(\psi',\mathcal{A}')$.

We now prove the main result of this section.
\begin{thm}
$U$ represents $\mathcal{R}ep_{rd}$.
\end{thm}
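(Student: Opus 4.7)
The plan is to deduce the theorem from the algebraically closed case (Theorem \ref{Main_Theorem}) by a Galois descent argument, using the fact that $(\psi,\mathcal{A}) \times k' \cong (\psi',\mathcal{A}')$ as was verified just before the theorem statement. Concretely, I will show that the natural transformation $\mathrm{Hom}_{Sch/k}(-,U) \to \mathcal{R}ep_{rd}(C_f,-)$ defined by $f \mapsto f^{*}(\psi,\mathcal{A})$ is an isomorphism of functors.

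First I would set up the base change. For any $k$-scheme $S$, let $S' = S \times_k k'$ with its natural $G$-action. Given an irreducible $rd$-dimensional $S$-representation $(\gamma,\mathcal{G})$, its pullback $(\gamma',\mathcal{G}')$ to $S'$ is an irreducible $rd$-dimensional $S'$-representation of $C_f$ (since Azumaya-ness, irreducibility, and the homomorphism all commute with faithfully flat base change). By Theorem \ref{Main_Theorem} applied over $k'$, there is a unique morphism $f':S' \to U'$ such that $(f')^{*}(\psi',\mathcal{A}') \cong (\gamma',\mathcal{G}')$.

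Next comes the main point: the map $f'$ is $G$-equivariant, hence descends. Both $(\psi',\mathcal{A}')$ and $(\gamma',\mathcal{G}')$ are obtained by base change from objects defined over $k$, so they come equipped with canonical $G$-linearizations. For any $\sigma \in G$, the morphism $\sigma \cdot f' = \sigma_{U'} \circ f' \circ \sigma_{S'}^{-1}$ satisfies
\[
(\sigma \cdot f')^{*}(\psi',\mathcal{A}') \;\cong\; \sigma_{S'}^{-*}(f')^{*}\sigma_{U'}^{*}(\psi',\mathcal{A}') \;\cong\; \sigma_{S'}^{-*}(f')^{*}(\psi',\mathcal{A}') \;\cong\; \sigma_{S'}^{-*}(\gamma',\mathcal{G}') \;\cong\; (\gamma',\mathcal{G}'),
\]
where the middle isomorphism uses the $G$-linearization of $(\psi',\mathcal{A}')$ and the last uses that of $(\gamma',\mathcal{G}')$. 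By the uniqueness in Theorem \ref{Main_Theorem}, $\sigma \cdot f' = f'$ for every $\sigma \in G$. By Galois descent of morphisms (since $U$ is quasi-projective and $k'/k$ is Galois; equivalently, by faithfully flat descent for $\mathrm{Spec}\,k' \to \mathrm{Spec}\,k$ applied to the sheaf $\mathrm{Hom}(S,U)$), there is a unique $f:S \to U$ with $f \times_k k' = f'$. A short faithfully flat descent argument then shows $f^{*}(\psi,\mathcal{A}) \cong (\gamma,\mathcal{G})$: the given isomorphism over $S'$ is $G$-equivariant by construction, so it descends.

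For injectivity of the map $\mathrm{Hom}_{Sch/k}(S,U) \to \mathcal{R}ep_{rd}(C_f,S)$, if $f_1,f_2:S \to U$ pull $(\psi,\mathcal{A})$ back to equivalent $S$-representations, then $f_1',f_2':S' \to U'$ pull $(\psi',\mathcal{A}')$ back to equivalent $S'$-representations, so the uniqueness clause of Theorem \ref{Main_Theorem} gives $f_1' = f_2'$, whence $f_1 = f_2$ by the faithful flatness of $k'/k$. The main obstacle is the verification of $G$-equivariance of $f'$, and more specifically producing the canonical $G$-linearizations of $(\psi',\mathcal{A}')$ and $(\gamma',\mathcal{G}')$ in a way that makes the cocycle argument above literally correct; once these linearizations are pinned down, both existence and uniqueness reduce formally to the algebraically closed case together with standard Galois descent for quasi-projective varieties and coherent sheaves.
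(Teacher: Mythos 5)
Your strategy is essentially the paper's: reduce to the algebraically closed case (Theorem \ref{Main_Theorem}) by Galois descent for $G=Gal(k'/k)$, using $Hom_{Sch/k}(S,U)=Hom_{Sch/k'}(S',U')^{G}$ and the fact, established just before the theorem, that $(\psi,\mathcal{A})\times_{k}k'\cong(\psi',\mathcal{A}')$. The packaging is slightly different: the paper proves that the base-change map $\mathcal{R}ep_{rd}(S)\to\mathcal{R}ep'_{rd}(S')^{G}$ is bijective, where surjectivity requires \emph{effective} descent of the Azumaya algebra together with the homomorphism from $C_f$, whereas you descend the classifying morphism $f'$ and then only have to compare two representations already defined over $S$; this spares you the effectivity step, which is a genuine (if minor) economy. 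Your $G$-equivariance computation for $f'$ (via uniqueness in Theorem \ref{Main_Theorem}) and your injectivity argument are fine.

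The gap is in the final existence step, where you assert that the equivalence $(f')^{*}(\psi',\mathcal{A}')\cong(\gamma',\mathcal{G}')$ is ``$G$-equivariant by construction.'' Nothing in Theorem \ref{Main_Theorem} produces a canonical equivalence, and in general two objects over $S$ can become isomorphic over $S'$ without being isomorphic over $S$; forms exist exactly when automorphism groups are nontrivial (compare a nonsplit Azumaya algebra with a matrix algebra). What rescues the step is irreducibility: an equivalence between two irreducible representations, if it exists, is \emph{unique}, because composing one with the inverse of another gives an automorphism of the sheaf of algebras fixing the images of the generators of $C_f$, and these images generate locally (Definition \ref{s-rep}), so the automorphism is the identity. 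Applying this uniqueness to $\theta'$ and its Galois conjugates, transported through the canonical linearizations of $f^{*}(\psi,\mathcal{A})\times_{k}k'$ and $(\gamma,\mathcal{G})\times_{k}k'$, yields the $G$-equivariance you need, and the equivalence then descends by faithfully flat descent of homomorphisms. This rigidity statement is precisely the cocycle computation $\phi_{\tau}\circ\tau\phi_{\sigma}\circ\phi_{\tau\sigma}^{-1}=\mathrm{id}$ at the heart of the paper's proof, so it cannot be waved off as bookkeeping of linearizations; once you insert it, your argument is complete (modulo the identification $Hom_{Sch/k}(S,U)=Hom_{Sch/k'}(S',U')^{G}$ for the infinite extension $k'/k$, which both you and the paper take as standard).
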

\begin{proof}
Let $S$ be a $k$-scheme. Denote $S'=S \times_{k} k'$. We have a map $\mathcal{R}ep_{rd}(S) \to \mathcal{R}ep'_{rd}(S')$ by taking an $S$-representation $(\psi,\mathcal{O}_{A})$ to the $S'$-representation $(\psi',\mathcal{O}_{A}\times_{k} k')$ where $\psi'$ is the induced $k'$ algebra homomorphism $C_f \to H^0(\mathcal{O}_{A}\times_{k}k')$. It is clear that $(\psi',\mathcal{O}_{A}\times_{k} k')$ is invariant under the action of $G$. Hence we obtain a map $\mathcal{R}ep_{rd}(S) \to \mathcal{R}ep'_{rd}(S')^{G} \cong Hom_{Sch/k'}(S',U')^{G}=Hom_{Sch/k}(S,U)$. Here, $G$ acts on $Hom_{Sch/k'}(S',U')$ by conjugation. In order to show that this map is a bijection, we need to show that the first arrow is a bijection. Injectivity follows from Theorem 6, pg. 135, \cite{BLR90}. To prove surjectivity, consider an element $(\psi',\mathcal{O}_{A}')$ in $\mathcal{R}ep'_{rd}(S')^{G}$. This means that, for every $\sigma \in G$ considered as an automorphism of $S'$, $\sigma^{*}(\psi',\mathcal{O}_{A}')$ is equivalent to $(\psi',\mathcal{O}_{A}')$. In other words, we can find an isomorphism $\phi_{\sigma}: \sigma^{*}\mathcal{O}_{A}' \to \mathcal{O}_{A}'$ such that the map $\psi':C_f \to H^0(\mathcal{O}_{A}')$ is equal to the composition $H^0(\phi_{\sigma})  \circ \sigma^{*}\psi'$. Here, $\sigma^{*}\psi'$ is the induced map $C_f \to H^0(\sigma^{*}\mathcal{O}_{A}')$. Now consider $\phi_{\tau} \circ \tau \phi_{\sigma} \circ \phi_{\tau \sigma}^{-1}$. This is an automorphism of $\mathcal{O}_{A}'$ that is the identity on the two global sections given by the images of $x_1$ and $x_2$ in $C_f$. Since these two global sections generate $\mathcal{O}_{A}'$ locally, $\phi_{\tau} \circ \tau \phi_{\sigma} \circ \phi_{\tau \sigma}^{-1}$ must be the identity. Hence the $\phi_{\sigma}$ form a descent datum for $(\psi',\mathcal{O}_{A}')$ and there is an $S$-representation $(\psi,\mathcal{O}_{A})$ that is a model for $(\psi',\mathcal{O}_{A}')$. This proves the surjectivity of the map as above and finishes the proof.
\end{proof}
% ----------------------------------------------------------------


\begin{thebibliography}{10}

\bibitem{dE99}
David Eisenbud, \emph{Commutative algebra with a view toward algebraic
  geometry}, Springer-Verlag, 1999.

\bibitem{gF95}
Gerd Faltings, \emph{Vector bundles on curves (lecture notes)}, 1995.

\bibitem{dM82}
David Mumford;~John Fogarty, \emph{Geometric invariant theory, 2nd edition},
  Springer-Verlag, 1982.

\bibitem{aG61}
Alexandre Grothendieck, \emph{Techniques de construction et
  th{\'{e}}or{\`{e}}mes d'existence en g{\'{e}}om{\'{e}}trie alg{\'{e}}brique
  iv: Les sch{\'{e}}mas de Hilbert}, S{\'{e}}minaire Bourbaki (1961), no.~221.

\bibitem{aG65}
Alexandre Grothendieck, \emph{\'{E}l\'{e}ments de g\'{e}om\'{e}trie alg\'{e}brique}, Institut des Hautes Etudes
  Scientifiques, 1965.

\bibitem{dH84}
D.~Haile, \emph{On the clifford algebra of a binary cubic form}, Amer. J. Math.
  \textbf{106}, 1269--1280.

\bibitem{EH00}
D.~Eisenbud;~J. Harris, \emph{The geometry of schemes}, Springer-Verlag, 2000.

\bibitem{rH77}
Robin Hartshorne, \emph{Algebraic geometry}, Springer-Verlag, 1977.

\bibitem{rK03}
R.~S. Kulkarni, \emph{On the clifford algebra of a binary form}, Trans. Amer.
  Math. Soc. \textbf{355} (2003), 3181--3208.

\bibitem{jL97}
J.~LePotier, \emph{Lectures on vector bundles}, Cambridge University Press,
  1997.

\bibitem{LM89}
H.~B. Lawson; M.-L. Michelsohn, \emph{Spin geometry}, Princeton University
  Press, 1989.

\bibitem{jsM80}
J.~S. Milne, \emph{Etale cohomology}, Princeton University Press, 1980.

\bibitem{dM70}
David Mumford, \emph{Abelian varieties, 2nd ed.}, Oxford University Press,
  1970.

\bibitem{DN89}
J.-M. Drezet; M.~S. Narasimhan, \emph{Groupe de picard des varietes de modules
  de fibres semi-stables sur les courbes algebriques}, Invent. Math.
  \textbf{97}, 53--94.

\bibitem{peN78}
P.~E. Newstead, \emph{Introduction to moduli problems and orbit spaces},
  Springer-Verlag, 1978.

\bibitem{iP95}
I.~Porteous, \emph{Clifford algebras and the classical groups}, Cambridge
  University Press, 1995.

\bibitem{cP73}
Claudio Procesi, \emph{Rings with polynomial identities}, Marcel Dekker, 1973.

\bibitem{BLR90}
S.~Bosch; W. L{\"{u}}tkebohmert;~M. Raynaud, \emph{Neron models},
  Springer-Verlag, 1990.

\bibitem{dS99}
David~J. Saltman, \emph{Lectures on division algebras}, American Mathematical
  Society, 1999.

\bibitem{cS67}
C.~S. Seshadri, \emph{Space of unitary vector bundles on a compact riemann
  surface}, Ann. of Math. \textbf{85} (1967), 303--336.

\bibitem{HT88}
D.~Haile;~S. Tesser, \emph{On azumaya algebras arising from clifford algebras},
  J. Algebra \textbf{116} (1988), 372--384.

\bibitem{mvdB87}
M.~van~den Bergh, \emph{Linearisations of binary and ternary forms}, J. Algebra
  \textbf{109} (1987), 172--183.

\bibitem{mvdB89}
M.~van~den Bergh, \emph{The center of the generic division algebra}, J. Algebra
  \textbf{127} (1989), 106--126.

\end{thebibliography}
\end{document}